\documentclass[11pt]{amsart}

\usepackage{amsmath,amssymb,amsfonts,amsthm,mathtools,mathrsfs,float,graphicx}
\usepackage[colorlinks=true,linkcolor=black,citecolor=blue,urlcolor=blue]{hyperref}

\newtheorem{theorem}{Theorem}[section]
\newtheorem{proposition}[theorem]{Proposition}
\newtheorem{lemma}[theorem]{Lemma}
\newtheorem{corollary}[theorem]{Corollary}
\newtheorem{conjecture}[theorem]{Conjecture}
\newtheorem{remark}[theorem]{Remark}

\newcommand{\Z}{\mathbb Z}
\newcommand{\Q}{\mathbb Q}
\newcommand{\curly}[1]{\left\{#1\right\}}
\newcommand{\Gauss}[2]{\begin{bmatrix}#1\\#2\end{bmatrix}_{Q}}

\title[Cyclotomic expansions via Bailey transforms]
{Cyclotomic expansions of colored $SU(n)$ invariants of two-strand torus knots and Bailey transforms}

\author{Chuwen Wang}

\begin{document}
\maketitle

\vspace{-1.8em}
\begin{center}
{\small Department of Mathematics, Renmin University of China}\\[-2pt]
{\small
\href{mailto:chuwenwang@ruc.edu.cn}
{\textcolor{blue}{chuwenwang@ruc.edu.cn}}
}
\end{center}
\vspace{1em}

\begin{abstract}
Habiro's cyclotomic expansion of the colored Jones polynomial has a
higher rank analogue conjectured by Chen--Liu--Zhu for colored
\(SU(n)\) invariants.  We prove this conjecture for torus
knots \(T(2,2p+1)\) and give a Bailey theoretic realization of the
cyclotomic coefficients.  For \(n\geq2\) and
\(K_p=T(2,2p+1)\), the colored $SU(n)$ invariants admit an
expansion
\[
J_N^{SU(n)}(K_p;q)
=
\sum_{m=0}^{N}
\left(\prod_{j=0}^{m-1}\{N-j\}\{N+n+j\}\right)
H_m^{(n,p)}(q),
\]
where \(H_m^{(n,p)}(q)\in\mathbb Z[q^{\pm1}]\) is independent of the
color \(N\).

The proof identifies the cyclotomic basis with a Newton basis and rewrites the resulting Newton coefficients as an ordinary Bailey transform. 
The Lin--Zheng formula for \(T(2,2p+1)\) then gives a well-poised Bailey kernel.
A terminating very-well-poised \({}_6\phi_5\) summation diagonalizes the kernel and reduces the integrality problem to an ordinary Bailey transition. 
We prove a uniform integrality theorem for these transitions in a formal integral \(q\)-difference operator algebra.

As a direct corollary, the expansion yields the corresponding congruence
relations and proves part~{\rm (i)} of the Chen--Liu--Zhu
\(SU(n)\) volume conjecture for \(T(2,2p+1)\).

\end{abstract}

\setcounter{tocdepth}{2}
{\hypersetup{linkcolor=blue}\tableofcontents}

\section{Introduction}
\label{sec:introduction}

Habiro's cyclotomic expansion of the colored Jones polynomial is a
fundamental integrality structure in quantum topology. 
The color dependence is carried by explicit cyclotomic factors, with
Laurent polynomial coefficients independent of the color; 
see \cite{HabiroSL2,HabiroCyclotomic,Habiro,HabiroLe}.
A natural question is whether an analogous cyclotomic structure persists
for colored \(SU(n)\) invariants.

Let \(J_N^{SU(n)}(K;q)\) denote the
colored \(SU(n)\) invariant of a zero-framed knot \(K\), associated with the
trivial partition \((N)\), and normalized by
\[
J_N^{SU(n)}(U;q)=1
\]
for the unknot \(U\).  

Motivated by congruence relations for colored HOMFLY-PT invariants \cite{CLPZ,ZhuStructures},
Chen--Liu--Zhu proposed the following \(SU(n)\) generalization of
Habiro's cyclotomic expansion.

\begin{conjecture}[Chen--Liu--Zhu]\label{conj:CLZ}
For every knot \(K\) and every \(n\geq2\), there exist Laurent polynomials
\[
H_m^{(n)}(K;q)\in\mathbb Z[q^{\pm1}],
\]
independent of the color \(N\), such that
\begin{equation}\label{eq:intro-CLZ}
J_N^{SU(n)}(K;q)
=
\sum_{m=0}^{N}
C_{N+1,m}^{(n)}(q)\,H_m^{(n)}(K;q),
\end{equation}
where
\begin{equation}\label{eq:intro-C}
C_{N+1,m}^{(n)}(q)
=
\prod_{j=0}^{m-1}
\{N-j\}\{N+n+j\}.
\end{equation}
\end{conjecture}

The conjecture was proved for the figure-eight knot and the trefoil knot in \cite{CLZ}.
Several related forms of cyclotomic expansion are known.  
For colored Jones polynomials, Masbaum gave a skein theoretic derivation of Habiro-type formulas \cite{Masbaum}. 
Hikami--Lovejoy obtained Bailey pair formulas for the two-strand torus knots \(T(2,2p+1)\) \cite{HikamiLovejoy}.
Related cyclotomic expansions were also studied for generalized Jones polynomials \cite{BerestGallagherSamuelson} and for colored superpolynomials \cite{ChenSuper}.
In higher rank, Kameyama--Nawata--Tao--Zhang formulated a cyclotomic expansion conjecture for HOMFLY-PT polynomials colored by rectangular Young diagrams \cite{KNTZ}.
Beliakova--Gorsky constructed a cyclotomic expansion for \(\mathfrak{gl}_N\) invariants using interpolation Macdonald polynomials and a completion of the center of \(U_q(\mathfrak{gl}_N)\) \cite{BeliakovaGorsky}.  
Their expansion differs from Habiro's expansion considered here and, even in the \(\mathfrak{sl}_2\) specialization, is not written in Habiro's standard cyclotomic basis.
Explicit HOMFLY-PT formulas and cyclotomic expansions for particular knot families were studied in \cite{CLZDoubleTwist,Kawagoe,KawagoeTwist}.
An earlier calculation directly relevant here is due to Wei: in an unpublished master's-thesis manuscript, the \(SU(3)\), \(N=2\) case of \eqref{eq:intro-CLZ} was proved for \(T(2,2p+1)\) by a divisibility induction \cite{Wei}.  

For torus knots, explicit formulas for colored \(SU(n)\) invariants
go back to Rosso--Jones \cite{RossoJones}.
Our argument uses the Hecke-algebraic formula of Lin--Zheng
\cite{LinZheng}.

The present paper gives a direct proof of the Chen--Liu--Zhu expansion
for the torus knot  \(T(2,2p+1)\) together with a
Bailey operator formula for every cyclotomic coefficient.  Set
\(Q=q^2,\)
and, for \(c\neq0\), define the lower-triangular Bailey operator
\begin{equation}\label{eq:intro-T-def}
(\mathcal T_cf)_m
=
\sum_{j=0}^{m}
\frac{c^jQ^{j^2}}{(Q;Q)_{m-j}}f_j.
\end{equation}
Its diagonal entries are \(c^mQ^{m^2}\), so \(\mathcal T_c\) is
invertible over the corresponding rational function field.  Our main
theorem is the following.

\begin{figure}[H]
    \includegraphics[width=0.5\textwidth]{fig/sigma.ai}
    \caption{The standard positive generator \(\sigma_1\) of the two-strand braid group \(B_2\).}
\end{figure}

\begin{theorem}\label{thm:main}
Let \(n\geq2\), \(p\geq0\), and
\[
K_p=T(2,2p+1)=\widehat{\sigma_1^{\,2p+1}}.
\]
Then, for every \(N\geq0\),
\begin{equation}\label{eq:main-expansion}
J_N^{SU(n)}(K_p;q)
=
\sum_{m=0}^{N}
\left(
\prod_{j=0}^{m-1}\{N-j\}\{N+n+j\}
\right)
H_m^{(n,p)}(q),
\end{equation}
where the coefficients are independent of \(N\) and are given by
\begin{equation}\label{eq:main-H-operator}
H_m^{(n,p)}(q)
=
(-1)^m q^{m(m+n+1)}
\left[
\mathcal T_{Q^n}^{-(p+1)}
\mathcal T_Q^{p+1}\delta
\right]_m,
\qquad
\delta=(1,0,0,\ldots).
\end{equation}
Moreover,
\[
H_m^{(n,p)}(q)\in\mathbb Z[q^{\pm1}]
\qquad(m\geq0).
\]
For the mirror \(K_p^*\), the expansion holds with
\[
H_m^{(n)}(K_p^*;q)=H_m^{(n,p)}(q^{-1}).
\]
\end{theorem}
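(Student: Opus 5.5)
The proof has three stages: a Newton-basis reformulation, a Bailey-kernel computation from the Lin--Zheng formula, and an integrality theorem for Bailey transitions.

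\textbf{Stage 1: Newton basis and Newton coefficients.} With \(Q=q^2\) and \(\{k\}=q^k-q^{-k}\), we have
\[
\{N-j\}\{N+n+j\}=x+y_j,\qquad x=q^{2N+n}+q^{-2N-n},
\]
where \(y_j\) depends only on \(j\) and \(n\). Hence \(C_{N+1,m}^{(n)}\) is a Newton polynomial \(\prod_{j<m}(x-x_j)\) in the variable \(x\), with nodes \(x_j\) at \(N=j\), up to the shift coming from \(N+n+j\). This has two consequences.

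First, any function of \(N\) that is a Laurent polynomial in \(x\) has a unique expansion in this basis. It is determined by its values at \(N=0,1,2,\dots\), since \(C_{N+1,m}=0\) for \(m>N\).

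Second, the coefficients \(H_m\) are the divided differences of the values \(J_{N}\), \(N\le m\). Writing these divided differences as an inverse of the lower-triangular matrix \((C_{N+1,m})_{N,m}\) gives an explicit \(q\)-hypergeometric inversion formula. After normalizing, I expect this to be exactly the inverse Bailey-pair relation with parameter \(a=Q^{n}\), possibly up to shifts. This stage is routine linear algebra with \(q\)-Pochhammer symbols.

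Independence of \(N\) is then automatic. The expansion \eqref{eq:main-expansion} holds for all \(N\) by the triangular construction, provided the \(H_m\) are defined by this inversion. The real content of the theorem is therefore the closed formula \eqref{eq:main-H-operator} together with integrality.

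\textbf{Stage 2: The Lin--Zheng kernel and its diagonalization.} The Lin--Zheng formula expresses \(J_N^{SU(n)}(K_p)\) as a sum over the irreducible summands \((2N-k,k)\) of \(V_N^{\otimes2}\), \(0\le k\le N\). Each summand carries the twist eigenvalue raised to the power \(2p+1\), a sign \((-1)^k\), and a quantum dimension. I would substitute this sum into the inverse Bailey relation from Stage 1.

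The outcome is a double sum whose inner sum over \(N\) is a terminating very-well-poised \({}_6\phi_5\). Its evaluation by the standard summation formula should turn the expression into
\[
H=\mathcal T_{Q^n}^{-1}\,D\,\mathcal T_{Q}\,\delta
\]
raised to the \(p\)-dependent power. Here \(D\) is diagonal, reflecting that the twist acts diagonally on the Bailey pair basis. Iterating the twist \(p+1\) times, in the spirit of the Andrews--Bailey chain as used by Hikami--Lovejoy, then yields the operator \(\mathcal T_{Q^n}^{-(p+1)}\mathcal T_Q^{p+1}\).

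I would check the normalization against \(p=0\). The unknot \(T(2,1)\) must give \(H=\delta\), which requires \(\mathcal T_{Q^n}^{-1}\mathcal T_Q\delta\) to match the prefactor \((-1)^mq^{m(m+n+1)}\). If it does not, this pins down the missing framing factor. I would also cross-check against the known trefoil computation in \cite{CLZ}.

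\textbf{Stage 3: Integrality (main obstacle).} A priori the entries of \(\mathcal T_{Q^n}^{-(p+1)}\mathcal T_Q^{p+1}\delta\) have denominators \((Q;Q)_k\). The plan is to work in the algebra of formal lower-triangular operators generated by multiplication operators and the \(q\)-shift. There I would write
\[
\mathcal T_c=\Phi\circ M_c,
\]
where \(\Phi\) is convolution with \(1/(Q;Q)_k\), that is, \(1/(z;Q)_\infty\) by \(q\)-binomial theory, and \(M_c\) multiplies the \(j\)th entry by \(c^jQ^{j^2}\).

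The key claim is that \(\mathcal T_{Q^n}^{-1}\mathcal T_Q\) is itself an integral operator. I would try to prove this from the explicit inverse \(\mathcal T_c^{-1}\), whose entries are \((-1)^{m-j}Q^{\binom{m-j}{2}}/(Q;Q)_{m-j}\) times diagonal factors. Composing with \(\mathcal T_Q\) produces a kernel
\[
\sum_k \frac{(\cdots)}{(Q;Q)_{m-k}(Q;Q)_{k-j}},
\]
which becomes a \(q\)-binomial sum. I would aim to show it equals \((Q;Q)_{m-j}^{-1}\) times a multiple of \((Q^{1-n};Q)_{m-j}\)-type products, and hence lies in \(\mathbb Z[q^{\pm1}]\) after the \(\mathcal T\)-normalization.

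The difficulty is that the \(p+1\) powers do not commute into a single factor, since \(\mathcal T_{Q^n}\) and \(\mathcal T_Q\) do not commute. If the one-step claim fails, my fallback is induction on \(p\) with a stronger invariant: the operator sends the integral lattice \(\{(Q;Q)_m^{-1}\mathbb Z[q^{\pm1}]\}\) to itself and preserves a Bailey pair structure. Integrality of \(H\) would then follow by applying the operator to \(\delta\).

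\textbf{Mirror statement.} The mirror follows from \(J_N^{SU(n)}(K^*;q)=J_N^{SU(n)}(K;q^{-1})\) together with the invariance of \(C_{N+1,m}^{(n)}\) under \(q\mapsto q^{-1}\). The latter holds because each factor \(\{N-j\}\{N+n+j\}\) is a product of two terms that each change sign, so their product is unchanged. Uniqueness of the Newton expansion then forces \(H_m^{(n)}(K_p^*;q)=H_m^{(n,p)}(q^{-1})\).
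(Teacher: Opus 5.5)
You have the right architecture: Newton basis, a Bailey pair relative to $a=Q^n$, a ${}_6\phi_5$ evaluation, a Bailey chain, and $\mathcal T_c$ factored as a convolution times a diagonal. Stage~1 and the mirror argument are fine. However, neither substantive step is actually carried out.

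\textbf{Stage 2.} After you insert the Lin--Zheng sum and interchange, the sum over the color index $i$ (with $s=i-j$ fixed) carries the twist factor $a^{-pi}Q^{-pi^2}$. It is therefore a terminating very-well-poised ${}_6\phi_5$ only when $p=0$. Your ``$\mathcal T_{Q^n}^{-1}D\mathcal T_Q\delta$ raised to a $p$-dependent power'' is not a well-defined object. The naive iteration $(\mathcal T_{Q^n}^{-1}\mathcal T_Q)^{p+1}\delta$ is just $\delta$, since $\mathcal T_{Q^n}^{-1}\mathcal T_Q=\operatorname{diag}(Q^{(1-n)m})$.

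The missing idea is that the twist sits on both sides of a $p$-independent WP-Bailey kernel:
$\alpha^{(n,p)}_i=a^{-pi}Q^{-pi^2}(DW_{Q,a}A^{(p)})_i$, where $A^{(p)}_s=Q^{ps(s+1)}A^{(0)}_s$, $(W_{Q,a})_{i,s}=\frac{(a/Q;Q)_{i-s}(a;Q)_{i+s}}{(Q;Q)_{i-s}(Q^2;Q)_{i+s}}$, and $D=\operatorname{diag}\bigl(a^{-i}Q^i\frac{1-aQ^{2i}}{1-a}\bigr)$. The ${}_6\phi_5$ is used only once, to prove the intertwining identity $M_aDW_{Q,a}=SM_Q$ with $S=\operatorname{diag}((Q/a)^m)$. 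The limiting Bailey lemma, applied relative to $a$ on the left and relative to $Q$ on the right, then gives $\mathcal T_a^{p}\beta=S\mathcal T_Q^{p}\delta$. Finally $S=\mathcal T_a^{-1}\mathcal T_Q$ yields $\beta=\mathcal T_a^{-(p+1)}\mathcal T_Q^{p+1}\delta$.

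\textbf{Stage 3.} Your key claim is true but immediate. Write $\mathcal T_c=\mathcal C\mathcal E_c$, where $\mathcal C$ is multiplication by $\Phi(z)=\sum_r z^r/(Q;Q)_r$ and $\mathcal E_c$ is your diagonal operator. Then $\mathcal T_{cQ^\ell}=\mathcal T_c\tau_\ell$ with $\tau_\ell F(z)=F(Q^\ell z)$, so $\mathcal T_{Q^n}^{-1}\mathcal T_Q=\tau_{1-n}$ is diagonal. As you note, this does not control $\mathcal T_{Q^n}^{-(p+1)}\mathcal T_Q^{p+1}$, and your fallback is invoked only if the one-step claim fails.

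The fallback is also too weak. The lattice $\{f:(Q;Q)_mf_m\in\mathbb Z[q^{\pm1}]\}$ is already preserved by $\mathcal T_c$ and by $\mathcal T_c^{-1}$ separately, because $q$-binomials absorb the denominators. So it only gives $\beta_m\in(Q;Q)_m^{-1}\mathbb Z[q^{\pm1}]$, which leaves exactly the denominators you need to remove.

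What is needed is a conjugation argument. With $\mathcal A=\mathcal T_{Q^n}$ and $\tau=\tau_{1-n}$,
\[
\mathcal T_{Q^n}^{-r}\mathcal T_Q^{r}=\mathcal A^{-r}(\mathcal A\tau)^r=\bigl(\mathcal A^{-(r-1)}\tau\mathcal A^{r-1}\bigr)\cdots\bigl(\mathcal A^{-1}\tau\mathcal A\bigr)\tau .
\]
Conjugation by $\mathcal A$ preserves the algebra of locally finite sums $\sum a_{s,k}z^s\tau_k$ with $a_{s,k}\in\mathbb Z[Q^{\pm1}]$. This follows from two computations:
\begin{itemize}
\item $\mathcal C^{-1}z^s\tau_k\mathcal C=z^s\,\frac{\Phi(Q^kz)}{\Phi(z)}\,\tau_k$, where $\Phi(Q^kz)/\Phi(z)$ equals $(z;Q)_k$ for $k\ge0$ and $1/(Q^{-d}z;Q)_d$ for $k=-d<0$. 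Both lie in $\mathbb Z[Q^{\pm1}][[z]]$.
\item $\mathcal E_c^{-1}z^s\tau_k\mathcal E_c=c^{-s}Q^{-s^2}z^s\tau_{k-2s}$.
\end{itemize}
The nonintegral series $\Phi^{\pm1}$ therefore enter only through these integral ratios, and that is what proves integrality.
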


\begin{corollary}\label{cor:all-two-strand}
For every \(n\geq2\) and every odd integer \(k\), the colored
\(SU(n)\) invariants of the torus knot \(T(2,k)\) admit the
Chen--Liu--Zhu cyclotomic expansion.
\end{corollary}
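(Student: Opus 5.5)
The corollary should follow from Theorem~\ref{thm:main} once every odd \(k\) is matched to one of the knots treated there. Write \(k=2p+1\).

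\emph{Case \(k\geq 1\).} Here \(p\geq0\), so \(T(2,k)=K_p=\widehat{\sigma_1^{\,2p+1}}\). Theorem~\ref{thm:main} gives the expansion \eqref{eq:main-expansion} with coefficients \(H_m^{(n,p)}(q)\). These are independent of \(N\) and lie in \(\Z[q^{\pm1}]\), so they serve as the \(H_m^{(n)}(K;q)\) required in Conjecture~\ref{conj:CLZ}. The cyclotomic factor \(\prod_{j=0}^{m-1}\{N-j\}\{N+n+j\}\) coincides with \(C^{(n)}_{N+1,m}(q)\) of \eqref{eq:intro-C}.

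\emph{Case \(k=-1\).} The knot \(T(2,-1)\) is the closure of \(\sigma_1^{-1}\), which is the unknot. It also equals \(T(2,1)=K_0\) up to mirror, so it is covered by either case. Directly, \(J_N^{SU(n)}(U;q)=1\), so one may take \(H_0=1\) and \(H_m=0\) for \(m\geq1\).

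\emph{Case \(k\leq -3\).} Write \(k=-(2p+1)\) with \(p\geq1\). The closure of \(\sigma_1^{-(2p+1)}\) is the mirror image \(K_p^*\) of \(K_p\). Two points need checking here.
\begin{itemize}
\item The framing must be handled consistently. Both knots are normalized as zero-framed, and mirroring sends framing \(f\) to \(-f\). Hence the zero-framed normalization is preserved.
\item The last assertion of Theorem~\ref{thm:main} gives the expansion for \(K_p^*\) with coefficients \(H_m^{(n,p)}(q^{-1})\). These are independent of \(N\), and they lie in \(\Z[q^{\pm1}]\) because that ring is stable under \(q\mapsto q^{-1}\).
\end{itemize}
The mirror statement is consistent with the basis. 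Each \(\{a\}\) is antisymmetric under \(q\mapsto q^{-1}\), and the basis element \(C^{(n)}_{N+1,m}\) contains an even number \(2m\) of such factors. It is therefore invariant under inversion, which is exactly why substituting \(q^{-1}\) in the coefficients suffices.

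The only real content is this identification of \(T(2,k)\) for negative \(k\) with the mirror of \(K_{(|k|-1)/2}\), together with the orientation and framing conventions. I do not expect any genuine obstacle beyond checking that the mirror normalization in Theorem~\ref{thm:main} matches the zero-framed convention for \(J_N^{SU(n)}\).
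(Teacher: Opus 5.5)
Your proposal is correct and follows the paper's proof: positive odd $k$ is Theorem~\ref{thm:main} directly, and negative $k$ follows from the mirror clause of that theorem, since $T(2,k)$ is the mirror of $T(2,-k)$. Your separate treatment of $k=-1$ and your framing remark are harmless extras, because the mirror case with $p=0$ already covers $k=-1$.
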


The cyclotomic expansion also has the following immediate
consequences for congruence relations and $SU(n)$ volume conjecture; see \cite{CLZ} and Section \ref{sec:related}.

\begin{corollary}\label{cor:intro-congruence}
Let \(n\geq2\), let \(r\) be an odd integer, and let \(N\geq k\geq0\).
Then
\[
J_N^{SU(n)}(T(2,r);q)
\equiv
J_k^{SU(n)}(T(2,r);q)
\pmod{\{N-k\}\{N+k+n\}}.
\]
\end{corollary}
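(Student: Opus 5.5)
The congruence is a direct consequence of the cyclotomic expansion in Theorem~\ref{thm:main}, together with Corollary~\ref{cor:all-two-strand} for negative odd $r$. I would first reduce to the knots $K_p$. Every odd $r$ is $2p+1$ for some $p\geq0$, or $r=-(2p+1)$, in which case $T(2,r)=K_p^*$ and the expansion holds with coefficients $H_m^{(n,p)}(q^{-1})\in\Z[q^{\pm1}]$. So in both cases
\[
J_N^{SU(n)}(T(2,r);q)=\sum_{m=0}^{N}C_{N+1,m}^{(n)}(q)\,H_m(q),
\]
with Laurent coefficients $H_m$ that do not depend on $N$.

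Next, subtract the expansions for $N$ and for $k$. I split the sum for $N$ into the terms $m\le k$ and the terms $k<m\le N$. For $m>k$, the product $C_{N+1,m}^{(n)}=\prod_{j=0}^{m-1}\{N-j\}\{N+n+j\}$ contains the index $j=k$. That factor is $\{N-k\}\{N+n+k\}$, so every tail term is divisible by the modulus in $\Z[q^{\pm1}]$.

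For $m\le k$, it suffices to show
\[
C_{N+1,m}^{(n)}\equiv C_{k+1,m}^{(n)}\pmod{\{N-k\}\{N+k+n\}}.
\]
I would use the identity $\{a\}\{b\}=\{\tfrac{a+b}{2}\}^2-\{\tfrac{a-b}{2}\}^2$ type rewriting. Concretely, with $\{x\}=q^x-q^{-x}$ one has
\[
\{N-j\}\{N+n+j\}=(q^{2N+n}+q^{-2N-n})-(q^{2j+n}+q^{-2j-n}),
\]
and the same with $N$ replaced by $k$. Hence each factor of $C_{N+1,m}^{(n)}$ is a polynomial in the single quantity $X_N=q^{2N+n}+q^{-2N-n}$, with $N$-independent coefficients, and likewise for $k$. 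It follows that $C_{N+1,m}^{(n)}-C_{k+1,m}^{(n)}$ is divisible by $X_N-X_k$. Finally,
\[
X_N-X_k=(q^{N-k}-q^{k-N})(q^{N+k+n}-q^{-N-k-n})=\{N-k\}\{N+k+n\},
\]
which gives the required divisibility. Summing over $m$ yields the congruence.

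The only delicate point is the normalization of $\{x\}$. If the paper uses $\{x\}=q^{x/2}-q^{-x/2}$ or another variant, the identity $X_N-X_k=\{N-k\}\{N+k+n\}$ must be rechecked with matching exponents. The factorization argument is unchanged, since it only uses that each factor is a difference of $N$-symmetric quantities.
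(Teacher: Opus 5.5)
Your proposal is correct and is essentially the paper's argument: the paper likewise combines the integral expansion (Theorem~\ref{thm:main}, with the mirror case for negative $r$) with the fact that $P_m(X_N)-P_m(X_k)$ is divisible by $X_N-X_k=\{N-k\}\{N+k+n\}$. Your separate treatment of the terms $m>k$ via the explicit factor $j=k$ is equivalent to the paper's observation that $P_m(X_k)=0$ for $m>k$, and your normalization worry is moot since the paper uses $\{r\}=q^r-q^{-r}$ exactly as you did.
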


\begin{corollary}\label{cor:intro-volume}
Let \(n\geq2\), let \(r\) be an odd integer, and set
\[
\xi_{N,a}(s)
=
\exp\left(
\frac{s\pi\sqrt{-1}}{N+a}
\right),
\qquad a,s\in\mathbb Z.
\]
If
\[
a\notin\{1,\ldots,n-1\},
\]
then
\[
2\pi s
\lim_{N\to\infty}
\frac{
\log
J_N^{SU(n)}
\bigl(T(2,r);\xi_{N,a}(s)\bigr)
}{
N+1
}
=
0.
\]
Hence the \(a\notin\{1,\ldots,n-1\}\) part of the \(SU(n)\) volume conjecture holds for $T(2,r)$.
\end{corollary}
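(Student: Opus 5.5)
The plan is to apply the cyclotomic expansion of Theorem~\ref{thm:main} (together with its mirror version, i.e.\ Corollary~\ref{cor:all-two-strand}, so that every odd \(r\) is covered) at the root of unity \(q=\xi_{N,a}(s)\). The point is that the sum over \(m\) collapses to a number of terms that does not depend on \(N\), and that \(J_N\) tends to \(1\). For \(r=\pm1\) the knot is the unknot, \(J_N\equiv1\), and there is nothing to prove. Otherwise write \(T(2,r)\) as \(K_p\) or \(K_p^*\), with coefficients \(H_m(q)\) equal to \(H_m^{(n,p)}(q)\) or \(H_m^{(n,p)}(q^{-1})\). We may assume \(s\neq0\), since otherwise the prefactor \(2\pi s\) kills everything.

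\emph{Step 1 (truncation).} Write \(\{k\}=q^k-q^{-k}\). At \(q=\xi_{N,a}(s)\) we have \(q^{2(N+a)}=1\), so \(\{N+a\}=0\). Suppose \(a\le 0\). Then the factor \(\{N-j\}\) with \(j=-a\ge0\) occurs in \(\prod_{j=0}^{m-1}\{N-j\}\{N+n+j\}\) as soon as \(m>-a\), so every term with \(m>|a|\) vanishes. Suppose instead \(a\ge n\). Then the factor \(\{N+n+j\}\) with \(j=a-n\ge0\) occurs as soon as \(m>a-n\), so every term with \(m>a-n\) vanishes. Exactly the excluded values \(a\in\{1,\dots,n-1\}\) escape both mechanisms. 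Hence, for \(N\ge M\), where \(M=|a|\) or \(M=a-n\),
\[
J_N^{SU(n)}(T(2,r);\xi_{N,a}(s))=\sum_{m=0}^{M}\Bigl(\prod_{j=0}^{m-1}\{N-j\}\{N+n+j\}\Bigr)H_m(\xi_{N,a}(s)),
\]
which is a sum with a fixed, \(N\)-independent number of terms.

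\emph{Step 2 (asymptotics of each term).} We have \(H_0=1\), since \(\bigl[\mathcal T_{Q^n}^{-(p+1)}\mathcal T_Q^{p+1}\delta\bigr]_0=1\) because both operators are lower triangular with diagonal entries \(c^mQ^{m^2}\), equal to \(1\) at \(m=0\). As \(N\to\infty\) with \(a,s\) fixed, \(q=\xi_{N,a}(s)\to1\). Each of the finitely many Laurent polynomials \(H_m\) (\(m\le M\)) therefore stays bounded, since it converges to \(H_m(1)\). For the cyclotomic factors we compute
\[
\{N-j\}=2\sqrt{-1}\,\sin\!\Bigl(s\pi-\tfrac{s\pi(a+j)}{N+a}\Bigr)=O(1/N),
\]
and similarly \(\{N+n+j\}=O(1/N)\), because \(s\) is an integer. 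So the \(m\)-th term is \(O(N^{-2m})\), and \(J_N^{SU(n)}(T(2,r);\xi_{N,a}(s))=1+O(N^{-2})\to1\).

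\emph{Step 3 (conclusion).} Since \(J_N\to1\), for large \(N\) the principal branch of \(\log J_N\) is defined and tends to \(0\). Dividing by \(N+1\) and multiplying by \(2\pi s\) gives limit \(0\).

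The only genuinely delicate point is Step 1, namely checking that the vanishing factor really lies in the range \(0\le j\le m-1\) precisely when \(a\notin\{1,\dots,n-1\}\). Everything else is bounded-term estimates, and the whole argument rests on the \(N\)-independence and integrality of \(H_m\) supplied by Theorem~\ref{thm:main}.
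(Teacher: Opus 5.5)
Your proposal is correct and matches the paper's proof essentially step for step. At $q=\xi_{N,a}(s)$ the factor $\{N+a\}$ vanishes, which truncates the expansion to an $N$-independent number of terms exactly when $a\notin\{1,\ldots,n-1\}$; each cyclotomic factor is $O(N^{-1})$ and the Laurent polynomials $H_m$ stay bounded with $H_0=1$, so $J_N\to1$ and $\log J_N\to0$. Your separate handling of the mirror case via Corollary~\ref{cor:all-two-strand}, of $s=0$, and of $r=\pm1$ is harmless extra care that the paper leaves implicit.
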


\begin{remark}
    Although the main cyclotomic expansion theorem is stated for
torus knots \(T(2,2p+1)\), some of the algebraic results developed in the
proof are more general.  In particular, the Newton expansion and the
uniform integral transition theorem for the Bailey operators are
independent of the specialization.
\end{remark}
\subsection{Outline}

We first derive a Newton expansion valid for arbitrary knots. Set
\[
X_N=q^{2N+n}+q^{-2N-n}.
\]
Then
\[
X_N-X_j=\{N-j\}\{N+n+j\}.
\]
Hence the cyclotomic factors in \eqref{eq:intro-C} are precisely the
Newton basis polynomials
\[
\prod_{j=0}^{m-1}(X-X_j)
\]
evaluated at \(X=X_N\).  It follows that every sequence admits a unique
Newton expansion over \(\mathbb Q(q)\).  In particular, for every knot
\(K\), the corresponding Newton coefficients are
\begin{equation}\label{eq:intro-universal-H}
H_m^{(n)}(K;q)
=
\sum_{i=0}^{m}
(-1)^{m-i}
\frac{
\{n+i-1\}!\{n+2i\}
}{
\{i\}!\{m-i\}!\{n+i+m\}!
}
J_i^{SU(n)}(K;q).
\end{equation}
Thus the independence of the color \(N\) follows from the Newton
expansion, and the nontrivial content in the cyclotomic conjecture is
the Laurent integrality of these coefficients.

The same transformation also admits a Bailey theoretic interpretation.
With \(a=Q^n\), let
\[
(M_a)_{m,i}
=
\frac{1}{(Q;Q)_{m-i}(aQ;Q)_{m+i}}.
\]
After normalization, \eqref{eq:intro-universal-H} becomes an ordinary
Bailey transform relative to \(a\).  For \(K_p=T(2,2p+1)\), the
Lin--Zheng formula leads to a well-poised Bailey transform with kernel
\[
(W_{Q,a})_{i,s}
=
\frac{(a/Q;Q)_{i-s}(a;Q)_{i+s}}
{(Q;Q)_{i-s}(Q^2;Q)_{i+s}}.
\]
The relation between the ordinary and well-poised Bailey transforms is
given by the matrix identity
\begin{equation}\label{eq:intro-WP-diagonalization}
M_aDW_{Q,a}=SM_Q,
\end{equation}
where
\[
D_i=a^{-i}Q^i\frac{1-aQ^{2i}}{1-a},
\qquad
S=\operatorname{diag}
\left(\left(\frac Qa\right)^m\right)_{m\geq0}.
\]
This identity follows from a terminating very-well-poised
\({}_6\phi_5\) summation and yields
\begin{equation}\label{eq:intro-beta-transition-new}
\beta^{(n,p)}
=
\mathcal T_{Q^n}^{-(p+1)}\mathcal T_Q^{p+1}\delta,
\qquad
H_m^{(n,p)}
=
(-1)^m q^{m(m+n+1)}\beta_m^{(n,p)}.
\end{equation}

It remains to prove the integrality of the Bailey transition in
\eqref{eq:intro-beta-transition-new}.  More generally, if \(c\) is an
indeterminate, \(\ell\in\mathbb Z\), and \(r\geq0\), then
\begin{equation}\label{eq:intro-general-transition}
\mathcal T_c^{-r}\mathcal T_{cQ^\ell}^{r}
\end{equation}
has lower-triangular matrix coefficients in
\[
\mathbb Z[Q^{\pm1},c^{\pm1}].
\]
In fact, it defines a continuous
\(\mathbb Z[Q^{\pm1},c^{\pm1}]\)-linear automorphism of
\[
\mathbb Z[Q^{\pm1},c^{\pm1}][[z]].
\]
The specialization
\[
c=Q^n,
\qquad
\ell=1-n
\]
gives the transition
\(\mathcal T_{Q^n}^{-r}\mathcal T_Q^r\)
required in \eqref{eq:intro-beta-transition-new}.  The general result is
proved in Section~\ref{sec:integrality} by factoring the Bailey operator
into a convolution operator and a diagonal operator and studying their
conjugation action on an integral algebra of formal \(q\)-difference
operators.

{\itshape
The proofs of all the results presented in this paper had already been
completed, and some of the results had been presented in academic talks,
when the author became aware of the preprint of Fang--Zhou \cite{FangZhou}, first posted on 30 August 2026.  
The manuscript was still being prepared, and the author was at the same time exploring possible extensions of the method to all torus knots and, more generally,
to arbitrary knots.  Fang--Zhou prove the Chen--Liu--Zhu conjecture for
arbitrary knots by a different approach.  The present results were
obtained independently and provide a \(q\)-hypergeometric and Bailey theoretic description of the cyclotomic coefficients for torus knots.
}

\subsection{Organization}
Section~\ref{sec:preliminaries} introduces the notation and establishes the basic \(q\)-series identities.
Section~\ref{sec:QDD} identifies the cyclotomic factors with a Newton basis.
Section~\ref{sec:torus} treats the case of two-strand torus knots using the Lin--Zheng formula.
Section~\ref{sec:bailey} rewrites the Newton transform in terms of ordinary and well-poised Bailey transforms.
Section~\ref{sec:diagonalization} proves the WP-Bailey diagonalization and reduces the problem to an ordinary Bailey transition.
Section~\ref{sec:integrality} proves the uniform integral transition theorem and completes the proof of Theorem~\ref{thm:main}.
Section~\ref{sec:related} discusses higher-strand torus knots, Bailey structures for arbitrary knots, congruence relations, and the \(SU(n)\) volume conjecture.

\section{Notation and \(q\)-series identities}
\label{sec:preliminaries}

Throughout the paper, \(q\) is an indeterminate and \(Q=q^2\).
Unless another coefficient ring is specified, identities involving
\(q\) are understood in \(\Q(q)\).  All power series identities are understood formally, and no analytic
convergence is involved.

For \(r\in\mathbb Z\), we write
\[
\{r\}=q^r-q^{-r}.
\]

For \(r\geq0\), set
\[
\curly{r}!
=
\prod_{j=1}^{r}\curly{j},
\qquad
\curly{0}!=1.
\]
And the \(Q\)-Pochhammer symbol is
\[
(x;Q)_r
=
\prod_{j=0}^{r-1}(1-xQ^j),
\qquad
(x;Q)_0=1.
\]
We also use
\[
(x_1,\ldots,x_s;Q)_r
=
\prod_{\nu=1}^{s}(x_\nu;Q)_r.
\]

The Gaussian binomial coefficient is
\[
\Gauss{M}{r}
=
\frac{(Q;Q)_M}
{(Q;Q)_r(Q;Q)_{M-r}}
\qquad
(0\leq r\leq M).
\]

\begin{lemma}\label{lem:q-factorial-conversion}
For every \(r\geq0\),
\begin{equation}\label{eq:q-factorial-conversion}
\curly{r}!
=
(-1)^r
q^{-r(r+1)/2}
(Q;Q)_r.
\end{equation}
More generally, if \(u,k\geq0\), then
\begin{equation}\label{eq:factorial-ratio}
\frac{\curly{u+k}!}{\curly{u}!}
=
(-1)^k
q^{-ku-k(k+1)/2}
(Q^{u+1};Q)_k.
\end{equation}
\end{lemma}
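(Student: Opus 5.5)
The plan is to prove both identities factor by factor from the single elementary identity
\[
\{j\}=q^j-q^{-j}=-q^{-j}(1-q^{2j})=-q^{-j}(1-Q^j),
\]
which holds for every integer \(j\). It is a direct rearrangement, since \(-q^{-j}(1-q^{2j})=-q^{-j}+q^{j}\).

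For \eqref{eq:q-factorial-conversion}, I take the product over \(j=1,\ldots,r\):
\[
\{r\}!=\prod_{j=1}^{r}\bigl(-q^{-j}(1-Q^j)\bigr).
\]
The sign contributes \((-1)^r\). The power of \(q\) is \(q^{-\sum_{j=1}^r j}=q^{-r(r+1)/2}\). The remaining factor is \(\prod_{j=1}^r(1-Q^j)=(Q;Q)_r\), by the definition of the \(Q\)-Pochhammer symbol with \(x=Q\). The case \(r=0\) is the empty product on both sides.

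For \eqref{eq:factorial-ratio}, the ratio \(\{u+k\}!/\{u\}!\) equals \(\prod_{j=u+1}^{u+k}\{j\}\). Substituting the same identity gives three pieces:
\begin{itemize}
\item the sign \((-1)^k\);
\item the exponent \(-\sum_{j=u+1}^{u+k}j=-ku-k(k+1)/2\);
\item the product \(\prod_{i=0}^{k-1}(1-Q^{u+1}Q^{i})=(Q^{u+1};Q)_k\).
\end{itemize}
As a consistency check, one could instead divide two instances of \eqref{eq:q-factorial-conversion}, using \((Q;Q)_{u+k}/(Q;Q)_u=(Q^{u+1};Q)_k\), and confirm the exponent from
\[
\tfrac{u(u+1)}{2}-\tfrac{(u+k)(u+k+1)}{2}=-ku-\tfrac{k(k+1)}{2}.
\]

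There is no real obstacle here. The only care needed is the bookkeeping of the exponent sum and the reindexing \(j=u+1+i\) in the Pochhammer product.
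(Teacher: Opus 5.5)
Your proof is correct and matches the paper's argument: the paper also writes each factor as $\{j\}=-q^{-j}(1-Q^j)$ and collects the signs, the $q$-powers and the Pochhammer factors over $j=1,\ldots,r$ and over $j=u+1,\ldots,u+k$. Your extra consistency check, dividing two instances of the first identity, is harmless.
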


\begin{proof}
Since
\[
\{j\}=-q^{-j}(1-Q^j),
\]
we have
\[
\{r\}!
=
(-1)^r q^{-\sum_{j=1}^r j}
\prod_{j=1}^r(1-Q^j)
=
(-1)^r q^{-r(r+1)/2}(Q;Q)_r.
\]
Similarly,
\[
\frac{\{u+k\}!}{\{u\}!}
=
\prod_{j=1}^k\{u+j\}
=
(-1)^k
q^{-ku-k(k+1)/2}
(Q^{u+1};Q)_k.
\]
\end{proof}

We use the \(q\)-binomial identity \cite{GasperRahman}

\begin{equation}\label{eq:negative-q-binomial}
\frac{1}{(x;Q)_d}
=
\sum_{r=0}^{\infty}
\Gauss{d+r-1}{r}
x^r,
\qquad d\geq1.
\end{equation}

We also use the terminating very-well-poised \({}_6\phi_5\) summation \cite{GasperRahman}:
\begin{align}
&\sum_{r=0}^{N}
\frac{1-AQ^{2r}}{1-A}
\frac{
(A,b,c,Q^{-N};Q)_r
}{
(Q,AQ/b,AQ/c,AQ^{N+1};Q)_r
}
\left(
\frac{AQ^{N+1}}{bc}
\right)^r
\nonumber\\
&\hspace{35mm}
=
\frac{
(AQ,AQ/(bc);Q)_N
}{
(AQ/b,AQ/c;Q)_N
}.
\label{eq:6phi5}
\end{align}

\begin{lemma}
\label{lem:degenerate-6phi5}
For \(N\geq0\), the following identity holds in
\(\Q(A,b,Q)\):
\begin{align}
&\sum_{r=0}^{N}
\frac{1-AQ^{2r}}{1-A}
\frac{(A,b,Q^{-N};Q)_r}
{(Q,AQ/b,AQ^{N+1};Q)_r}
(-1)^r b^{-r}Q^{Nr-\binom r2}
\nonumber\\
&\hspace{35mm}
=
b^{-N}
\frac{(AQ;Q)_N}{(AQ/b;Q)_N}.
\label{eq:degenerate-6phi5}
\end{align}
\end{lemma}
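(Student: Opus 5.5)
The plan is to obtain \eqref{eq:degenerate-6phi5} as a limiting case of the very-well-poised summation \eqref{eq:6phi5}, by sending \(c\to\infty\) with \(A,b,N\) held fixed. Since both sides are finite sums (respectively finite products) of rational functions, no convergence question arises. It is enough to check that each summand on the left, and the right-hand side, is a rational function of \(c\) whose limit as \(c\to\infty\) exists and equals the corresponding expression in \eqref{eq:degenerate-6phi5}. Equivalently, one can put \(c=1/t\), clear denominators, and set \(t=0\).

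\textbf{Left-hand side.} The only \(c\)-dependence in the \(r\)-th term is through the factor
\[
\frac{(c;Q)_r}{(AQ/c;Q)_r}\,c^{-r}.
\]
For this factor:
\begin{itemize}
\item \((AQ/c;Q)_r\to 1\) as \(c\to\infty\);
\item \((c;Q)_r\,c^{-r}=\prod_{j=0}^{r-1}(c^{-1}-Q^j)\to(-1)^rQ^{\binom r2}\).
\end{itemize}
Thus the \(r\)-th term tends to
\[
\frac{1-AQ^{2r}}{1-A}\frac{(A,b,Q^{-N};Q)_r}{(Q,AQ/b,AQ^{N+1};Q)_r}\,(-1)^rQ^{\binom r2}\Bigl(\frac{AQ^{N+1}}{b}\Bigr)^r.
\]

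\textbf{Right-hand side.} Here \((AQ/c;Q)_N\to1\). For the remaining factor,
\[
(AQ/(bc);Q)_N=\prod_{j=0}^{N-1}\bigl(1-AQ^{j+1}/(bc)\bigr)\to 1
\]
as well. The right-hand side therefore tends to \((AQ;Q)_N/(AQ/b;Q)_N\).

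\textbf{Reconciling with the target.} This limit is the standard \(c\to\infty\) degeneration. It is not literally the stated identity: the stated summand carries \(b^{-r}Q^{Nr-\binom r2}\), and the stated right side carries the extra factor \(b^{-N}\). So the real work is to match the two forms. The natural route is to apply the same degeneration in a reversed or rescaled form, and I would try two possibilities:
\begin{itemize}
\item reverse the order of summation \(r\mapsto N-r\), using \((x;Q)_{N-r}=(x;Q)_N/\bigl((-1)^r x^r Q^{\binom r2-Nr+r}(Q^{1-N}/x;Q)_r\bigr)\);
\item take instead \(b\to\infty\) in \eqref{eq:6phi5} and rename \(c\) as \(b\).
\end{itemize}
In the second option the limit would produce \((-1)^rQ^{\binom r2}(AQ^{N+1}/c)^r\) on the left and \((AQ;Q)_N/(AQ/c;Q)_N\) on the right, again with no \(b^{-N}\).

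If neither specialization matches, I would fall back on a direct argument. The plan is:
\begin{enumerate}
\item Verify small cases such as \(N=0,1\) to pin down the correct normalization.
\item Prove the identity by induction on \(N\), or by the WZ/Sears method, writing the left side as the terminating sum \({}_4\phi_3\)-type limit.
\item Compare both sides as polynomials in \(b^{-1}\), using the \(q\)-binomial theorem to evaluate the coefficient extraction.
\end{enumerate}

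\textbf{Main obstacle.} The limiting argument itself is routine. The difficulty is precisely matching the power of \(Q\) and the sign/\(b\)-normalization, \(Q^{Nr-\binom r2}b^{-r}\) and \(b^{-N}\), to a legitimate degeneration of \eqref{eq:6phi5}. I expect the reversal \(r\mapsto N-r\) combined with \(c\to\infty\) to be the step that produces \(Q^{-\binom r2}\) and the global factor \(b^{-N}\).
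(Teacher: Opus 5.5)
\textbf{Gap.} Your $c\to\infty$ computation is correct, but it proves a different degeneration of \eqref{eq:6phi5}: the summand carries $(-1)^rQ^{\binom{r}{2}}(AQ^{N+1}/b)^r$ and the right side has no factor $b^{-N}$. None of your proposed repairs reaches the stated identity.

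The $b\to\infty$ variant is the same limit, by the $b\leftrightarrow c$ symmetry. The reversal $r\mapsto N-r$ cannot work either. In your degenerate sum the three numerator and three denominator Pochhammer symbols contribute cancelling factors $Q^{\pm\binom{r}{2}}$. The explicit factor becomes $Q^{\binom{N-r}{2}}=Q^{\binom{N}{2}-Nr+r}\,Q^{\binom{r}{2}}$, so the reversed sum still carries $Q^{+\binom{r}{2}}$. In fact it is the same identity with $(A,b)$ replaced by $(Q^{-2N}/A,\,bQ^{-N}/A)$. It can therefore never produce the factor $Q^{-\binom{r}{2}}$ you need. Your fallbacks (induction, WZ, comparing coefficients in $b^{-1}$) are only named, not carried out, so as written there is no proof.

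\textbf{Missing idea.} The paper takes the opposite degeneration, $c\to 0$. Here $(c;Q)_r\to 1$, and $c^r(AQ/c;Q)_r=\prod_{j=0}^{r-1}(c-AQ^{j+1})$ is a polynomial in $c$ whose value at $c=0$ is $(-1)^rA^rQ^{r+\binom{r}{2}}$. Hence
\[
\frac{(c;Q)_r}{(AQ/c;Q)_r}\Bigl(\frac{AQ^{N+1}}{bc}\Bigr)^r
=\frac{(c;Q)_r\,(AQ^{N+1}/b)^r}{c^r(AQ/c;Q)_r}
\;\longrightarrow\;(-1)^r b^{-r}Q^{Nr-\binom{r}{2}} .
\]
On the right, each factor $\frac{1-AQ^{j+1}/(bc)}{1-AQ^{j+1}/c}=\frac{c-AQ^{j+1}/b}{c-AQ^{j+1}}$ tends to $b^{-1}$, and this is where $b^{-N}$ comes from.

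Alternatively, you can salvage your own limit. Substitute $(Q,A,b)\mapsto(Q^{-1},A^{-1},b^{-1})$ in your $c\to\infty$ identity; this is legitimate because it is an identity of rational functions in independent indeterminates. Then use $(x^{-1};Q^{-1})_r=(-x^{-1})^rQ^{-\binom{r}{2}}(x;Q)_r$. The summand becomes exactly the stated one, and the right side becomes $b^{-N}(AQ;Q)_N/(AQ/b;Q)_N$. This base inversion is what turns $Q^{+\binom{r}{2}}$ into $Q^{-\binom{r}{2}}$, and it is equivalent to the $c\to0$ limit.
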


\begin{proof}
Since \eqref{eq:6phi5} is finite, we may evaluate it at \(c=0\) after cancellation.
\[
\left.
\frac{(c;Q)_r}{(AQ/c;Q)_r}
\left(\frac{AQ^{N+1}}{bc}\right)^r
\right|_{c=0}
=
(-1)^r b^{-r}Q^{Nr-\binom r2}.
\]
Similarly,
\[
\left.
\frac{(AQ/(bc);Q)_N}{(AQ/c;Q)_N}
\right|_{c=0}
=
b^{-N}.
\]
Substitution into \eqref{eq:6phi5} gives
\eqref{eq:degenerate-6phi5}.
\end{proof}

\begin{remark}
Equation~\eqref{eq:negative-q-binomial} is an identity in the formal
power series ring \(\Q(Q)[[x]]\), while \eqref{eq:6phi5} and
\eqref{eq:degenerate-6phi5} are finite rational function identities.
\end{remark}

\section{Cyclotomic factors as a Newton basis}
\label{sec:QDD}

\subsection{Newton expansions}

Fix \(n\geq2\).  Define
\begin{equation}\label{eq:XN}
X_N
=
q^{2N+n}+q^{-2N-n},
\qquad N\geq0.
\end{equation}

The basic observation is the following.

\begin{lemma}\label{lem:newton-difference}
For all \(N,j\geq0\),
\begin{equation}\label{eq:X-difference}
X_N-X_j
=
\curly{N-j}\curly{N+n+j}.
\end{equation}
\end{lemma}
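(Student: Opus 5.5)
The plan is to expand both sides directly from the definitions \(X_N=q^{2N+n}+q^{-2N-n}\) and \(\{r\}=q^r-q^{-r}\). The identity is elementary; it is the factorization \(x+x^{-1}-y-y^{-1}=(x-y)(1-x^{-1}y^{-1})\) in disguise, with \(x=q^{2N+n}\) and \(y=q^{2j+n}\).

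First expand the right-hand side:
\[
\{N-j\}\{N+n+j\}
=(q^{N-j}-q^{-N+j})(q^{N+n+j}-q^{-N-n-j}).
\]
Multiplying out gives four monomials:
\[
q^{2N+n}-q^{-2j-n}-q^{2j+n}+q^{-2N-n}.
\]
The exponents are computed as follows: \((N-j)+(N+n+j)=2N+n\), then \((N-j)-(N+n+j)=-2j-n\), then \((-N+j)+(N+n+j)=2j+n\), and finally \((-N+j)-(N+n+j)=-2N-n\).

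Regrouping, this equals \((q^{2N+n}+q^{-2N-n})-(q^{2j+n}+q^{-2j-n})\), which is \(X_N-X_j\) by \eqref{eq:XN}. No earlier results are needed beyond the definitions.

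The only point requiring care is the sign bookkeeping in the two cross terms. The identity holds for all integers \(N,j\), not just nonnegative ones, since nothing in the computation uses the sign of \(N\) or \(j\).
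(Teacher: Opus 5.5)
Your proposal is correct and is the same argument as the paper's proof: expand $\{N-j\}\{N+n+j\}$ into four monomials and regroup them as $X_N-X_j$. Your exponent and sign bookkeeping is right, as is the side remark that the identity holds for all integers $N,j$.
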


\begin{proof}
A direct calculation gives
\begin{align*}
\{N-j\}\{N+n+j\}
&=(q^{N-j}-q^{-N+j})(q^{N+n+j}-q^{-N-n-j})\\
&=q^{2N+n}+q^{-2N-n}
  -q^{2j+n}-q^{-2j-n}\\
&=X_N-X_j.
\end{align*}
\end{proof}

It follows immediately that
\begin{equation}\label{eq:C-newton}
C_{N+1,m}^{(n)}
=
\prod_{j=0}^{m-1}(X_N-X_j).
\end{equation}

For \(m\geq0\), set
\[
P_m(X)=\prod_{j=0}^{m-1}(X-X_j),
\]
with \(P_0(X)=1\).  Then \eqref{eq:C-newton} reads
\[
C_{N+1,m}^{(n)}=P_m(X_N).
\]
Thus \(P_0,P_1,\ldots\) form the Newton basis determined by the
sequence \(X_0,X_1,\ldots\), and the Chen--Liu--Zhu factors are its
values at \(X_N\).

\begin{proposition}\label{prop:newton-general}
Let \(F_0,F_1,\ldots\) be any sequence in \(\Q(q)\).
There exists a unique sequence
\[
A_0,A_1,\ldots\in\Q(q)
\]
such that
\begin{equation}\label{eq:newton-expansion-general}
F_N
=
\sum_{m=0}^{N}
A_m
\prod_{j=0}^{m-1}(X_N-X_j)
\qquad
(N\geq0).
\end{equation}
Moreover,
\begin{equation}\label{eq:newton-divided-difference}
A_m
=
\sum_{i=0}^{m}
\frac{F_i}
{\displaystyle
\prod_{\substack{0\leq j\leq m\\j\neq i}}
(X_i-X_j)}.
\end{equation}
\end{proposition}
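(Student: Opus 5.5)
The plan is to treat \eqref{eq:newton-expansion-general} as a lower-triangular linear system and then identify its solution with the classical divided differences. The first ingredient is that the nodes \(X_0,X_1,\ldots\) are pairwise distinct in \(\Q(q)\). By Lemma~\ref{lem:newton-difference}, for \(i\neq j\) we have \(X_i-X_j=\{i-j\}\{i+j+n\}\). Here \(i-j\neq0\), and \(i+j+n\geq n\geq2>0\), so both factors \(q^r-q^{-r}\) with \(r\neq0\) are nonzero in \(\Q(q)\). Consequently every product \(\prod_{j\neq i}(X_i-X_j)\) is invertible, and so is every \(P_N(X_N)=\prod_{j<N}(X_N-X_j)\).

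Existence and uniqueness then follow by induction on \(N\). The equation for index \(N\) reads
\[
F_N=\sum_{m<N}A_mP_m(X_N)+A_NP_N(X_N).
\]
Since \(P_N(X_N)\neq0\), this determines \(A_N\) uniquely from \(A_0,\dots,A_{N-1}\). Equivalently, the matrix \((P_m(X_N))_{N,m}\) is lower triangular with nonzero diagonal entries.

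To obtain \eqref{eq:newton-divided-difference}, fix \(m\) and consider the polynomial \(p(X)=\sum_{k=0}^{m}A_kP_k(X)\), which has degree at most \(m\). For \(i\leq m\) we have \(P_k(X_i)=0\) whenever \(k>i\), so \(p(X_i)=F_i\) for \(0\leq i\leq m\). Thus \(p\) is the unique interpolating polynomial of degree at most \(m\) through the points \((X_i,F_i)\), \(0\le i\le m\). By Lagrange interpolation,
\[
p(X)=\sum_{i=0}^m F_i\prod_{j\neq i}\frac{X-X_j}{X_i-X_j}.
\]
Each \(P_k\) is monic of degree \(k\), so the coefficient of \(X^m\) in \(p\) is \(A_m\). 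Reading off the coefficient of \(X^m\) in the Lagrange form gives exactly \eqref{eq:newton-divided-difference}.

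The only point needing care is the nonvanishing of the differences \(X_i-X_j\), and this is where \(n\geq2\) (indeed \(n\geq1\) suffices) is used. The rest is the standard theory of Newton and Lagrange interpolation over the field \(\Q(q)\).
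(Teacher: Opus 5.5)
Your proposal is correct and follows essentially the same route as the paper: you solve the lower-triangular recursion using the distinctness of the nodes $X_N$ from Lemma~\ref{lem:newton-difference}, then identify $\sum_{k\le m}A_kP_k$ with the Lagrange interpolant and compare coefficients of $X^m$. Your explicit checks that $P_k(X_i)=0$ for $k>i$ and that $\{i+j+n\}\neq0$ only spell out details the paper leaves implicit.
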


\begin{proof}
The \(X_N\) are pairwise distinct by
Lemma~\ref{lem:newton-difference}.  The expansion
\eqref{eq:newton-expansion-general} is therefore determined recursively:
evaluation at \(N=0,1,\ldots\) determines \(A_0,A_1,\ldots\)
successively, since
\[
\prod_{j=0}^{m-1}(X_m-X_j)\neq0.
\]
This proves existence and uniqueness.
For fixed \(m\), set
\[
I_m(X)
=
\sum_{r=0}^{m}
A_r\prod_{j=0}^{r-1}(X-X_j).
\]
By the recursive construction,
\[
I_m(X_i)=F_i
\qquad (0\leq i\leq m).
\]
Hence \(I_m\) is the unique interpolation polynomial of degree at
most \(m\) through the points \((X_i,F_i)\). Its Lagrange formula is
\[
I_m(X)
=
\sum_{i=0}^{m}
F_i
\prod_{\substack{0\leq j\leq m\\ j\neq i}}
\frac{X-X_j}{X_i-X_j}.
\]
Since
\(
\prod_{j=0}^{m-1}(X-X_j)
\)
is monic of degree \(m\), \(A_m\) is the leading coefficient of
\(I_m\). Comparing leading coefficients gives
\[
A_m
=
\sum_{i=0}^{m}
\frac{F_i}
{\displaystyle
\prod_{\substack{0\leq j\leq m\\ j\neq i}}
(X_i-X_j)}.
\]
\end{proof}

We next calculate the divided-difference denominator.

\begin{lemma}\label{lem:newton-denominator}
For \(0\leq i\leq m\),
\begin{equation}\label{eq:newton-denominator}
\prod_{\substack{0\leq j\leq m\\j\neq i}}
(X_i-X_j)
=
(-1)^{m-i}
\frac{
\curly{i}!\curly{m-i}!\curly{n+i+m}!
}{
\curly{n+i-1}!\curly{n+2i}
}.
\end{equation}
\end{lemma}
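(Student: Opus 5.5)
We will prove Lemma~\ref{lem:newton-denominator} by a direct computation, using Lemma~\ref{lem:newton-difference} to factor each term. For \(j\neq i\), Lemma~\ref{lem:newton-difference} gives
\[
X_i-X_j=\{i-j\}\{i+n+j\}.
\]
So the product splits into two independent products over \(j\in\{0,\ldots,m\}\setminus\{i\}\):
\[
\prod_{j\neq i}\{i-j\}
\qquad\text{and}\qquad
\prod_{j\neq i}\{n+i+j\}.
\]
We evaluate each separately.

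For the first product, split the range of \(j\) into \(0\leq j<i\) and \(i<j\leq m\).

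For \(0\leq j<i\), the values \(i-j\) run over \(1,\ldots,i\). This contributes \(\{i\}!\).

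For \(i<j\leq m\), write \(\{i-j\}=-\{j-i\}\). The values \(j-i\) run over \(1,\ldots,m-i\). This contributes \((-1)^{m-i}\{m-i\}!\).

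So the first product is \((-1)^{m-i}\{i\}!\{m-i\}!\). This accounts for the sign in \eqref{eq:newton-denominator}.

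For the second product, as \(j\) runs over all of \(0,\ldots,m\), the index \(n+i+j\) runs over \(n+i,\ldots,n+i+m\). The full product over all \(j\) is therefore
\[
\frac{\{n+i+m\}!}{\{n+i-1\}!}.
\]
Here \(n+i-1\geq1\) because \(n\geq2\), so every factor is nonzero. The term \(j=i\) is excluded, so we divide by its factor \(\{n+2i\}\). This gives
\[
\frac{\{n+i+m\}!}{\{n+i-1\}!\,\{n+2i\}}.
\]

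Multiplying the two pieces yields exactly the right-hand side of \eqref{eq:newton-denominator}.

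There is no real obstacle in this argument. The only points needing care are the bookkeeping of the sign \((-1)^{m-i}\) from the indices \(j>i\), and the removal of the single factor \(\{n+2i\}\) corresponding to \(j=i\) from the consecutive product. Substituting the result into \eqref{eq:newton-divided-difference} then gives \eqref{eq:intro-universal-H} immediately.
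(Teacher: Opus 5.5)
Your proposal is correct and follows the same route as the paper: factor each term via Lemma~\ref{lem:newton-difference}, then evaluate $\prod_{j\neq i}\{i-j\}=(-1)^{m-i}\{i\}!\{m-i\}!$ and $\prod_{j\neq i}\{n+i+j\}=\{n+i+m\}!/(\{n+i-1\}!\{n+2i\})$ separately. Your sign bookkeeping and your remark that the excluded factor $\{n+2i\}$ is nonzero fill in the details the paper leaves implicit.
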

\begin{proof}
By Lemma~\ref{lem:newton-difference},
\[
\prod_{\substack{0\leq j\leq m\\j\neq i}}
(X_i-X_j)
=
\left(\prod_{j\neq i}\{i-j\}\right)
\left(\prod_{j\neq i}\{n+i+j\}\right).
\]
The two factors are
\[
\prod_{j\neq i}\{i-j\}
=
(-1)^{m-i}\{i\}!\{m-i\}!,
\]
and
\[
\prod_{j\neq i}\{n+i+j\}
=
\frac{\{n+i+m\}!}
{\{n+i-1\}!\{n+2i\}}.
\]
Multiplying them gives \eqref{eq:newton-denominator}.
\end{proof}
Applying Proposition~\ref{prop:newton-general} to the colored
\(SU(n)\) invariant gives the following universal formula.

\begin{proposition}\label{prop:H-universal}
For every knot \(K\), there exists a unique sequence
\[
H_m^{(n)}(K;q)\in\Q(q),
\qquad m\geq0,
\]
such that
\[
J_N^{SU(n)}(K;q)
=
\sum_{m=0}^{N}
C_{N+1,m}^{(n)}
H_m^{(n)}(K;q).
\]
It is given explicitly by
\begin{equation}\label{eq:H-universal}
H_m^{(n)}(K;q)
=
\sum_{i=0}^{m}
(-1)^{m-i}
\frac{
\curly{n+i-1}!\curly{n+2i}
}{
\curly{i}!\curly{m-i}!\curly{n+i+m}!
}
J_i^{SU(n)}(K;q).
\end{equation}
\end{proposition}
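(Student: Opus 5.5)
The plan is to apply Proposition~\ref{prop:newton-general} with \(F_N=J_N^{SU(n)}(K;q)\), which lies in \(\Q(q)\) (indeed in \(\mathbb Z[q^{\pm1}]\)) for each \(N\geq0\). The key input is that the points \(X_0,X_1,\ldots\) are pairwise distinct in \(\Q(q)\). By Lemma~\ref{lem:newton-difference}, for \(N\neq j\) we have \(X_N-X_j=\{N-j\}\{N+n+j\}\). Here \(N-j\neq0\) and \(N+n+j\geq n\geq2\), so both factors are nonzero elements of \(\Q(q)\).

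Proposition~\ref{prop:newton-general} then gives a unique sequence \(A_m\in\Q(q)\) with
\[
F_N=\sum_{m=0}^{N}A_m\prod_{j=0}^{m-1}(X_N-X_j).
\]
By \eqref{eq:C-newton}, the product in this expansion is exactly \(C_{N+1,m}^{(n)}\). So setting \(H_m^{(n)}(K;q)=A_m\) gives both existence and uniqueness of an expansion of the required form. Uniqueness holds for the expansion in the factors \(C_{N+1,m}^{(n)}\) themselves, since these coincide with the Newton products.

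For the explicit formula, substitute the divided-difference expression \eqref{eq:newton-divided-difference} and invert the denominator computed in Lemma~\ref{lem:newton-denominator}. This gives
\[
\frac{1}{\prod_{j\neq i}(X_i-X_j)}
=(-1)^{m-i}\frac{\{n+i-1\}!\{n+2i\}}{\{i\}!\{m-i\}!\{n+i+m\}!}.
\]
Here \((-1)^{-(m-i)}=(-1)^{m-i}\), and \(\{n+i-1\}!\) is well defined because \(n+i-1\geq1\). Summing over \(i\) against \(J_i^{SU(n)}(K;q)\) yields \eqref{eq:H-universal}.

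There is no real obstacle; the argument is bookkeeping. The one point needing care is the denominator factor \(\{n+2i\}\): it is the term \(j=i\) that must be removed from \(\prod_{j=0}^{m}\{n+i+j\}=\{n+i+m\}!/\{n+i-1\}!\). Lemma~\ref{lem:newton-denominator} already accounts for this.
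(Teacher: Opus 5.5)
Your proposal is correct and follows essentially the same route as the paper. You apply Proposition~\ref{prop:newton-general} with $F_N=J_N^{SU(n)}(K;q)$, identify the Newton products with $C_{N+1,m}^{(n)}$ via \eqref{eq:C-newton}, and invert the denominator of Lemma~\ref{lem:newton-denominator} inside \eqref{eq:newton-divided-difference}; your extra checks (distinctness of the $X_N$, and removal of the $j=i$ factor $\{n+2i\}$) are accurate and only make explicit what the paper leaves implicit.
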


\begin{proof}
Take
\[
F_N=J_N^{SU(n)}(K;q)
\]
in Proposition~\ref{prop:newton-general}. By \eqref{eq:C-newton}, the Newton factors evaluated at \(X_N\)
are precisely \(C_{N+1,m}^{(n)}\). 
Substituting \eqref{eq:newton-denominator} into \eqref{eq:newton-divided-difference} gives \eqref{eq:H-universal}.
\end{proof}

\begin{remark}
Over \(\Q(q)\), existence and uniqueness are therefore apparent.
The nontrivial content of the cyclotomic expansion conjecture is the
stronger assertion
\[
H_m^{(n)}(K;q)\in\Z[q^{\pm1}].
\]
\end{remark}

\subsection{Mirror symmetry}
\label{sec:mirror}

Under our normalization, the mirror knot \(K^*\) satisfies
\[
J_N^{SU(n)}(K^*;q)
=
J_N^{SU(n)}(K;q^{-1}).
\]

\begin{lemma}\label{lem:mirror-basis}
The cyclotomic basis is invariant under \(q\mapsto q^{-1}\):
\[
C_{N+1,m}^{(n)}(q^{-1})
=
C_{N+1,m}^{(n)}(q).
\]
\end{lemma}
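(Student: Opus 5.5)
The plan is to show that each factor of \(C_{N+1,m}^{(n)}\) is preserved under \(q\mapsto q^{-1}\), which makes the product invariant. There are two ways to see this, and either suffices.

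The first route goes through Lemma~\ref{lem:newton-difference}. It gives
\[
C_{N+1,m}^{(n)}(q)=\prod_{j=0}^{m-1}(X_N-X_j),
\]
where \(X_N=q^{2N+n}+q^{-2N-n}\). Each \(X_N\) is manifestly symmetric under \(q\mapsto q^{-1}\), since the substitution merely swaps its two summands. Hence every difference \(X_N-X_j\) is invariant, and therefore so is the product over \(0\le j\le m-1\).

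The second route is to check the invariance directly on the factors in \eqref{eq:intro-C}. Under \(q\mapsto q^{-1}\) we have \(\{r\}\mapsto q^{-r}-q^{r}=-\{r\}\). Each factor \(\{N-j\}\{N+n+j\}\) is a product of two such brackets, so it picks up the sign \((-1)^2=1\) and is unchanged. Taking the product over \(j\) again gives the claim.

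No step here is a genuine obstacle; the whole argument is the one-line symmetry observation. I would present the \(X_N\) version, because it fits the Newton-basis viewpoint the paper uses elsewhere. Combined with \(J_N^{SU(n)}(K^*;q)=J_N^{SU(n)}(K;q^{-1})\) and the uniqueness statement in Proposition~\ref{prop:H-universal}, this invariance then yields \(H_m^{(n)}(K^*;q)=H_m^{(n)}(K;q^{-1})\).
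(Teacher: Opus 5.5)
Your proof is correct, and your second route is exactly the paper's argument: $\{r\}_{q^{-1}}=-\{r\}_q$, so the two sign changes in each factor $\{N-j\}\{N+n+j\}$ cancel. The $X_N$ version you prefer is the same observation expressed through Lemma~\ref{lem:newton-difference}, where the symmetry of $X_N=q^{2N+n}+q^{-2N-n}$ under $q\mapsto q^{-1}$ is immediate, so either presentation works.
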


\begin{proof}
Since
\[
\{r\}_{q^{-1}}=-\{r\}_q,
\]
each product
\[
\{N-j\}\{N+n+j\}
\]
is invariant under \(q\mapsto q^{-1}\).  Hence so is
\(C_{N+1,m}^{(n)}\).
\end{proof}

\begin{corollary}\label{cor:mirror}
If
\[
J_N^{SU(n)}(K;q)
=
\sum_{m=0}^{N}
C_{N+1,m}^{(n)}(q)
H_m^{(n)}(K;q),
\]
then
\[
H_m^{(n)}(K^*;q)
=
H_m^{(n)}(K;q^{-1}).
\]
In particular, Laurent integrality of the coefficients for \(K\)
is equivalent to Laurent integrality for \(K^*\).
\end{corollary}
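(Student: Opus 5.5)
The plan is to apply the mirror relation $J_N^{SU(n)}(K^*;q)=J_N^{SU(n)}(K;q^{-1})$ to the given expansion, use the invariance of the basis from Lemma~\ref{lem:mirror-basis}, and conclude by the uniqueness part of Proposition~\ref{prop:newton-general}.

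First, substitute $q\mapsto q^{-1}$ in the assumed identity
\[
J_N^{SU(n)}(K;q)=\sum_{m=0}^{N}C_{N+1,m}^{(n)}(q)\,H_m^{(n)}(K;q).
\]
This substitution is legitimate because $q\mapsto q^{-1}$ is a field automorphism of $\Q(q)$. Combining the result with the mirror relation gives
\[
J_N^{SU(n)}(K^*;q)=\sum_{m=0}^{N}C_{N+1,m}^{(n)}(q^{-1})\,H_m^{(n)}(K;q^{-1}).
\]

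Second, replace $C_{N+1,m}^{(n)}(q^{-1})$ by $C_{N+1,m}^{(n)}(q)$ using Lemma~\ref{lem:mirror-basis}. The sequence $m\mapsto H_m^{(n)}(K;q^{-1})$ is then a set of Newton coefficients for the sequence $F_N=J_N^{SU(n)}(K^*;q)$ with respect to the nodes $X_N$. The nodes are the same as before, because $X_N$ is symmetric in $q\leftrightarrow q^{-1}$.

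Third, invoke the uniqueness statement of Proposition~\ref{prop:newton-general}, equivalently Proposition~\ref{prop:H-universal}, for the knot $K^*$. This identifies $H_m^{(n)}(K^*;q)=H_m^{(n)}(K;q^{-1})$.

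The integrality statement then follows because $q\mapsto q^{-1}$ preserves $\Z[q^{\pm1}]$ and is an involution, so integrality holds for $K$ if and only if it holds for $K^*$.

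No step is a genuine obstacle. The only points needing care are that the substitution $q\mapsto q^{-1}$ preserves identities in $\Q(q)$, and that the uniqueness in Proposition~\ref{prop:newton-general} is unconditional once the $X_N$ are distinct.
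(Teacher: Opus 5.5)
Your proposal is correct and matches the paper's proof: substitute $q\mapsto q^{-1}$, combine with the mirror relation, restore the basis using Lemma~\ref{lem:mirror-basis}, and conclude by the uniqueness in Proposition~\ref{prop:newton-general}. Your added remark that $q\mapsto q^{-1}$ is an involution preserving $\Z[q^{\pm1}]$ supplies the integrality equivalence that the paper states without comment.
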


\begin{proof}
Replacing \(q\) by \(q^{-1}\) in the expansion for \(K\) gives
\[
J_N^{SU(n)}(K^*;q)
=
\sum_{m=0}^{N}
C_{N+1,m}^{(n)}(q^{-1})
H_m^{(n)}(K;q^{-1}).
\]
By Lemma~\ref{lem:mirror-basis},
\[
J_N^{SU(n)}(K^*;q)
=
\sum_{m=0}^{N}
C_{N+1,m}^{(n)}(q)
H_m^{(n)}(K;q^{-1}).
\]
The uniqueness in Proposition~\ref{prop:newton-general} now gives
\[
H_m^{(n)}(K^*;q)=H_m^{(n)}(K;q^{-1}).
\]
\end{proof}

\section{Colored \(SU(n)\) invariants of \(T(2,2p+1)\)}
\label{sec:torus}

Throughout this section,
\[
K_p=T(2,2p+1)=\widehat{\sigma_1^{\,2p+1}}
\]
is equipped with zero framing, and the reduced invariant is normalized
as in the Introduction.  Write
\[
k=2p+1.
\]

\subsection{A plethysm identity}

Let \(s_\lambda\) denote the Schur function indexed by a partition \(\lambda\), and write \(h_r=s_{(r)}\).
We use the standard conventions of \cite{Macdonald}.

\begin{lemma}\label{lem:plethysm}
For every \(N\geq0\),
\begin{equation}\label{eq:plethysm}
s_{(N)}(x_1^2,x_2^2,\ldots)
=
\sum_{j=0}^{N}
(-1)^j
s_{(2N-j,j)}(x_1,x_2,\ldots).
\end{equation}
\end{lemma}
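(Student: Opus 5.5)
The plan is to prove the identity in finitely many variables \(x_1,\dots,x_M\) for every \(M\); since both sides are symmetric functions of degree \(2N\), stability under setting variables to zero then gives the identity in \(\Lambda\). The cleanest route is the bialternant (Jacobi) formula. Write \(a_\alpha(x)=\det(x_i^{\alpha_j})\) and \(\rho=(M-1,\dots,1,0)\). Then
\[
s_{(N)}(x^2)\,a_\rho(x)=\sum_{\text{coefficients}} \ \cdots,
\]
and I would expand \(h_N(x_1^2,\dots,x_M^2)\,a_\rho(x)\) as a signed sum of alternants \(a_{\mu+\rho}\). The coefficient of \(s_\mu\) is then read off as the coefficient of \(a_{\mu+\rho}\).

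A more economical version uses generating functions. We have
\[
\sum_N h_N(x^2)t^N=\prod_i\frac1{1-x_i^2t},
\qquad
\frac1{1-x_i^2t}=\frac{1}{(1-x_iu)(1+x_iu)}
\quad\text{with } u^2=t.
\]
Hence
\[
\sum_N h_N(x^2)\,u^{2N}=H(u)H(-u),
\qquad
H(u)=\sum_r h_r u^r.
\]
Taking the coefficient of \(u^{2N}\) gives
\[
h_N(x^2)=\sum_{a+b=2N}(-1)^b h_a h_b
=\sum_{j=0}^{2N}(-1)^j h_{2N-j}h_j .
\]
By the symmetry \(j\leftrightarrow 2N-j\), whose signs agree because \(2N\) is even, this becomes
\[
h_N(x^2)=2\sum_{j=0}^{N-1}(-1)^j h_{2N-j}h_j+(-1)^N h_N^2 .
\]

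Next, use the Jacobi–Trudi formula for two-row shapes:
\[
s_{(2N-j,j)}=h_{2N-j}h_j-h_{2N-j+1}h_{j-1}\qquad(0\le j\le N),
\]
with \(h_{-1}=0\). Summing with signs, the right-hand side of the lemma equals
\[
\sum_{j=0}^N(-1)^j h_{2N-j}h_j-\sum_{j=0}^{N}(-1)^j h_{2N-j+1}h_{j-1}.
\]
Reindex the second sum by \(j'=j-1\), so it runs over \(0\le j'\le N-1\) with sign \(-(-1)^{j'}\). The whole expression then becomes
\[
\sum_{j=0}^N(-1)^jh_{2N-j}h_j+\sum_{j'=0}^{N-1}(-1)^{j'}h_{2N-j'}h_{j'},
\]
which is exactly \(2\sum_{j<N}(-1)^jh_{2N-j}h_j+(-1)^Nh_N^2\). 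This matches the expression obtained above and completes the proof.

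The main obstacle is only bookkeeping. Two points need care: the sign and index shift in the second Jacobi–Trudi term, and the claim that the \(j\leftrightarrow 2N-j\) symmetry preserves signs, which holds precisely because the total degree \(2N\) is even. The generating-function identity \(\prod(1-x_i^2t)^{-1}=H(u)H(-u)\) is immediate in formal power series.
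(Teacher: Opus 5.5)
Your proof is correct and follows essentially the same route as the paper. Both use the Jacobi--Trudi expansion $s_{(2N-j,j)}=h_{2N-j}h_j-h_{2N-j+1}h_{j-1}$, reindex the second sum, use the sign-preserving symmetry $j\leftrightarrow 2N-j$, and then apply the generating-function identity $H(u)H(-u)=\prod_\ell(1-x_\ell^2u^2)^{-1}$. The opening bialternant sketch and the reduction to finitely many variables are never used and can be dropped.
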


\begin{proof}
The Jacobi--Trudi identity gives
\[
s_{(2N-j,j)}
=
h_{2N-j}h_j-h_{2N-j+1}h_{j-1},
\]
where \(h_{-1}=0\).
Therefore
\begin{align*}
\sum_{j=0}^{N}(-1)^j s_{(2N-j,j)}
&=
\sum_{j=0}^{N}
(-1)^j h_{2N-j}h_j
-
\sum_{j=1}^{N}
(-1)^j h_{2N-j+1}h_{j-1}
\\
&=
\sum_{j=0}^{N}
(-1)^j h_{2N-j}h_j
+
\sum_{j=0}^{N-1}
(-1)^j h_{2N-j}h_j.
\end{align*}
Pairing the \(j\)-th and
\((2N-j)\)-th terms shows that this is exactly
\[
\sum_{j=0}^{2N}
(-1)^j h_{2N-j}h_j.
\]

Let
\[
H(u)=\sum_{r\geq0}h_r u^r
=
\prod_{\ell\geq1}(1-x_\ell u)^{-1}.
\]
Then
\[
H(u)H(-u)
=
\prod_{\ell\geq1}
\frac{1}{(1-x_\ell u)(1+x_\ell u)}
=
\prod_{\ell\geq1}
\frac{1}{1-x_\ell^2u^2}.
\]
Consequently,
\[
H(u)H(-u)
=
\sum_{N\geq0}
h_N(x_1^2,x_2^2,\ldots)u^{2N}.
\]
The coefficient of \(u^{2N}\) in the left-hand side is
\[
\sum_{j=0}^{2N}
(-1)^j h_{2N-j}h_j.
\]
Thus
\[
\sum_{j=0}^{N}
(-1)^j s_{(2N-j,j)}
=
h_N(x_1^2,x_2^2,\ldots)
=
s_{(N)}(x_1^2,x_2^2,\ldots).
\]
\end{proof}

\subsection{Quantum dimensions for the two-row partitions}

For a partition \(\lambda\) with at most \(n\) parts, the corresponding
quantum dimension is
\[
\dim_q V_\lambda
=
s_\lambda(q^{n-1},q^{n-3},\ldots,q^{-(n-1)}).
\]

The hook-content formula gives (see \cite{Macdonald}),
\begin{equation}\label{eq:quantum-hook}
\dim_qV_\lambda
=
\prod_{(r,c)\in\lambda}
\frac{\curly{n+c-r}}{\curly{h_{r,c}}},
\end{equation}
where \(h_{r,c}\) is the hook length of the box \((r,c)\).

\begin{lemma}\label{lem:two-row-dimension}
Let
\[
\lambda=(2N-j,j),
\qquad
0\leq j\leq N.
\]
Then
\begin{equation}\label{eq:two-row-dim}
\dim_qV_{(2N-j,j)}
=
\frac{
\curly{2N-2j+1}
\curly{n+2N-j-1}!
\curly{n+j-2}!
}{
\curly{2N-j+1}!
\curly{j}!
\curly{n-1}!
\curly{n-2}!
}.
\end{equation}
Moreover,
\begin{equation}\label{eq:symmetric-dim}
\dim_qV_{(N)}
=
\frac{
\curly{n+N-1}!
}{
\curly{N}!\curly{n-1}!
}.
\end{equation}
\end{lemma}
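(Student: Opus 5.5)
We apply the quantum hook-content formula \eqref{eq:quantum-hook} directly to the two-row diagram \(\lambda=(2N-j,j)\), computing separately the product of content factors and the product of hook lengths. The second formula \eqref{eq:symmetric-dim} is the case of a single row, obtained either by the same computation or by setting \(j=0\) in \eqref{eq:two-row-dim} with \(N\) replaced appropriately. It is therefore cleanest to do the one-row computation first and reuse it.

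\emph{Contents.} Boxes in row \(r=1\) have \(c=1,\ldots,2N-j\), so the numerators are \(\{n+c-1\}\) for \(c=1,\ldots,2N-j\), giving \(\{n+2N-j-1\}!/\{n-1\}!\). Boxes in row \(r=2\) have \(c=1,\ldots,j\) and contribute \(\{n+c-2\}\), giving \(\{n+j-2\}!/\{n-2\}!\). Since \(n\geq2\), all factors are nonzero; when \(n=2\) the convention \(\{0\}!=1\) is consistent. This accounts exactly for the numerator factorials \(\{n+2N-j-1\}!\,\{n+j-2\}!\) and the denominator \(\{n-1\}!\,\{n-2\}!\).

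\emph{Hooks.} In row \(2\), box \((2,c)\) has hook length \(j-c+1\), so row \(2\) gives \(\{j\}!\). In row \(1\), box \((1,c)\) with \(c\le j\) has hook length \(2N-j-c+2\), and for \(j<c\le 2N-j\) the hook length is \(2N-j-c+1\). For \(c\le j\), the values are \(2N-j+1\) down to \(2N-2j+2\). For \(c>j\), the values are \(2N-2j\) down to \(1\). So row \(1\) yields \(\{2N-j+1\}!/\{2N-2j+1\}\), i.e.\ the full factorial with the single missing value \(2N-2j+1\) removed. Inverting the hook product places \(\{2N-j+1\}!\,\{j\}!\) in the denominator and \(\{2N-2j+1\}\) in the numerator, which together with the content computation gives \eqref{eq:two-row-dim}.

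For the one-row partition \((N)\), the contents give \(\{n+N-1\}!/\{n-1\}!\) and the hooks give \(\{N\}!\), which is \eqref{eq:symmetric-dim}. The only step needing care is the bookkeeping of first-row hook lengths: they are all values \(1,\ldots,2N-j+1\) except the gap \(2N-2j+1\), and that missing value is what produces the extra numerator factor. The edge cases \(j=0\) and \(j=N\) should be checked. For \(j=0\), the gap is \(2N+1\), the top value, and the numerator factor \(\{2N+1\}\) cancels against \(\{2N+1\}!\). For \(j=N\), the gap is \(1\), the bottom value, so the factor \(\{1\}\) is simply removed from \(\{N+1\}!\). Both are consistent with the general formula, so no separate argument is required.
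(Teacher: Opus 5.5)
Your proposal is correct and follows the paper's argument: apply the hook-content formula, read off the content products $\{n+2N-j-1\}!/\{n-1\}!$ and $\{n+j-2\}!/\{n-2\}!$, and note that the first-row hook lengths run over $1,\ldots,2N-j+1$ except for the single gap $2N-2j+1$ (the paper writes this as $\{A+1\}!\,\{A-B\}!/\{A-B+1\}!$). One aside is imprecise: setting $j=0$ in the two-row formula only gives $\dim_qV_{(2N)}$, so it covers even lengths only, but you do the one-row computation directly anyway, so nothing is lost.
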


\begin{proof}
Put
\[
A=2N-j,\qquad B=j.
\]
The numerator in the hook-content formula is
\begin{align*}
\prod_{c=1}^{A}\curly{n+c-1}
\prod_{c=1}^{B}\curly{n+c-2}
&=
\frac{\curly{n+A-1}!}{\curly{n-1}!}
\frac{\curly{n+B-2}!}{\curly{n-2}!}.
\end{align*}

We next calculate the hook-length product.
For boxes in the first row with \(1\leq c\leq B\),
\[
h_{1,c}=A-c+2.
\]
For \(B<c\leq A\),
\[
h_{1,c}=A-c+1.
\]
For the second row,
\[
h_{2,c}=B-c+1.
\]
Hence
\begin{align*}
\prod_{(r,c)\in(A,B)}\curly{h_{r,c}}
&=
\left(
\prod_{c=1}^{B}\curly{A-c+2}
\right)
\left(
\prod_{c=B+1}^{A}\curly{A-c+1}
\right)
\curly{B}!
\\
&=
\frac{\curly{A+1}!}{\curly{A-B+1}!}
\curly{A-B}!
\curly{B}!
\\
&=
\frac{
\curly{A+1}!\curly{B}!
}{
\curly{A-B+1}
}.
\end{align*}
Substituting
\[
A=2N-j,\qquad B=j
\]
into \eqref{eq:quantum-hook} gives
\eqref{eq:two-row-dim}.

For \(\lambda=(N)\), the numerator is
\[
\frac{\curly{n+N-1}!}{\curly{n-1}!},
\]
while the hook-length product is \(\curly{N}!\).
This proves \eqref{eq:symmetric-dim}.
\end{proof}

Define
\begin{equation}\label{eq:R-def}
R_{N,j}^{(n)}
=
\frac{
\dim_qV_{(2N-j,j)}
}{
\dim_qV_{(N)}
}.
\end{equation}

Combining the two formulas in
Lemma~\ref{lem:two-row-dimension} gives
\begin{equation}\label{eq:R-curly}
R_{N,j}^{(n)}
=
\frac{
\curly{2N-2j+1}
\curly{n+2N-j-1}!
\curly{n+j-2}!
\curly{N}!
}{
\curly{2N-j+1}!
\curly{j}!
\curly{n-2}!
\curly{n+N-1}!
}.
\end{equation}

\subsection{The Lin--Zheng formula}

We now derive the exact formula for the colored \(SU(n)\) invariant.

The variables \(t_{\rm LZ},\nu_{\rm LZ}\) used by Lin--Zheng are
related to the variables used here by
\[
t_{\rm LZ}^{1/2}=q^{-1},
\qquad
\nu_{\rm LZ}^{1/2}=q^{-n}.
\]
Thus
\[
t_{\rm LZ}=q^{-2},
\qquad
\nu_{\rm LZ}=q^{-2n}.
\]
In this specialization, the reduced invariant used in the present
paper is
\begin{equation}\label{eq:reduced-normalization}
J_N^{SU(n)}(K_p;q)
=
\frac{W_{K_p;(N)}(q^{-2},q^{-2n})}
{s_{(N)}^*(q^{-2},q^{-2n})}
=
\frac{W_{K_p;(N)}(q^{-2},q^{-2n})}{\dim_qV_{(N)}}.
\end{equation}

For a partition \(\lambda\), write
\[
\kappa_\lambda
=
\sum_{i\geq1}\lambda_i(\lambda_i-2i+1).
\]
Since \(k=2p+1\) is odd, \(\gcd(2,k)=1\).  Thus
specialized to \(r=2\), and color \((N)\), \cite{LinZheng} gives
\begin{align}
W_{K_p;(N)}
&=
t_{\rm LZ}^{\,k\kappa_{(N)}}
\nu_{\rm LZ}^{\,kN/2}
\sum_{\lambda\vdash2N}
c_{(N)}^\lambda
t_{\rm LZ}^{-k\kappa_\lambda/4}
s_\lambda^*(t_{\rm LZ},\nu_{\rm LZ}).
\label{eq:LZ-special}
\end{align}
where the coefficients \(c_{(N)}^\lambda\) are defined by
\[
s_{(N)}(x_1^2,x_2^2,\ldots)
=
\sum_{\lambda\vdash 2N}
c_{(N)}^\lambda s_\lambda(x_1,x_2,\ldots).
\]
By Lemma~\ref{lem:plethysm} and the linear independence of the Schur
functions,
\[
c_{(N)}^{(2N-j,j)}=(-1)^j,
\qquad 0\leq j\leq N,
\]
and all other coefficients vanish.

\begin{proposition}\label{prop:torus-J}
For \(K_p=T(2,2p+1)\),
\begin{equation}\label{eq:torus-J}
J_N^{SU(n)}(K_p;q)
=
\sum_{j=0}^{N}
(-1)^j
q^{(2p+1)
\left(
j^2-(2N+1)j-(n-1)N
\right)}
R_{N,j}^{(n)},
\end{equation}
where \(R_{N,j}^{(n)}\) is given by
\eqref{eq:R-curly}.
\end{proposition}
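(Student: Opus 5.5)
Substituting the plethysm coefficients into \eqref{eq:LZ-special} and dividing by \(\dim_qV_{(N)}\) as in \eqref{eq:reduced-normalization} gives
\[
J_N^{SU(n)}(K_p;q)=t_{\rm LZ}^{k\kappa_{(N)}}\nu_{\rm LZ}^{kN/2}\sum_{j=0}^N(-1)^j t_{\rm LZ}^{-k\kappa_{(2N-j,j)}/4}\,\frac{s^*_{(2N-j,j)}(t_{\rm LZ},\nu_{\rm LZ})}{\dim_qV_{(N)}} .
\]
So the proof has three steps. First, identify the specialized Schur functions with quantum dimensions. Second, compute the two contents \(\kappa\). Third, collect the exponents of \(q\).

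\textbf{Quantum dimensions.} Under \(t_{\rm LZ}^{1/2}=q^{-1}\) and \(\nu_{\rm LZ}^{1/2}=q^{-n}\), the standard Lin--Zheng expression for \(s^*_\lambda(t,\nu)\) is a product over boxes of ratios \((\nu^{1/2}t^{-c/2}-\nu^{-1/2}t^{c/2})/(t^{h/2}-t^{-h/2})\), where \(c\) is the content and \(h\) the hook length. Each such factor becomes \(\{n+c-r\}/\{h_{r,c}\}\), possibly up to a global sign. By \eqref{eq:quantum-hook}, this identifies \(s^*_\lambda\) with \(\dim_qV_\lambda\). Any sign is uniform, since every factor's sign flips together, and it equals \(\pm1\) raised to \(|\lambda|\). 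It therefore cancels in the ratio, because the normalization \eqref{eq:reduced-normalization} uses the same convention. The ratio is then exactly \(R^{(n)}_{N,j}\) from \eqref{eq:R-def}.

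\textbf{Contents.} With \(\kappa_\lambda=\sum_i\lambda_i(\lambda_i-2i+1)\), we have \(\kappa_{(N)}=N(N-1)\). For the two-row partitions,
\[
\kappa_{(2N-j,j)}=(2N-j)(2N-j-1)+j(j-3)=4N^2-2N-4Nj+2j^2-2j.
\]

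\textbf{Exponent bookkeeping.} The exponent of \(q\) in the \(j\)-th term is
\[
k\Bigl(-2N(N-1)-nN+\tfrac12\kappa_{(2N-j,j)}\Bigr)=k\bigl(j^2-(2N+1)j-(n-1)N\bigr).
\]
Expanding the middle expression gives \(k(-2N^2+2N-nN+2N^2-N-2Nj+j^2-j)\), which is exactly \(k\bigl(j^2-(2N+1)j-(n-1)N\bigr)\). This is \eqref{eq:torus-J}.

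\textbf{Main obstacle.} The delicate point is the normalization conventions in \cite{LinZheng}. One must check that the prefactor \(t^{k\kappa_{(N)}}\nu^{kN/2}\) is exactly the zero-framing correction, and that the exponent of \(t\) is \(-k\kappa_\lambda/4\) rather than some other sign or scaling. If the sign convention on \(\kappa\) were opposite, the check above would fail. I would test the formula on \(N=1\), \(p=0\): the unknot must give \(J=1\). Using \(R_{1,0}+R_{1,1}\) with the exponents above, the check reduces to \(q^{-(n-1)}[n+1]/[2]-q^{-n-2}[n-1]/[2]=1\), where \([a]=\{a\}/\{1\}\) is the quantum integer. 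This identity holds. It fixes the signs and confirms the specialization.
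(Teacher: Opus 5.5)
Your three steps are the paper's proof. The paper also identifies $s^*_\lambda(q^{-2},q^{-2n})$ with $\dim_qV_\lambda$, computes $\kappa_{(N)}=N(N-1)$ and $\kappa_{(2N-j,j)}=4N^2-4Nj+2j^2-2N-2j$, and checks that $\kappa_\lambda/2-2N(N-1)-nN=j^2-(2N+1)j-(n-1)N$. Your exponent bookkeeping is correct, so the main line stands, but three of your supporting remarks are wrong.

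\textbf{The sign does not cancel.} A uniform per-box sign $\epsilon$ survives the normalization. The numerator $W_{K_p;(N)}$ involves $s^*_\lambda$ with $|\lambda|=2N$, while you divide by $s^*_{(N)}$ with $|(N)|=N$. The ratio is therefore $\epsilon^{N}R^{(n)}_{N,j}$, not $R^{(n)}_{N,j}$. You need $s^*_\lambda=\dim_qV_\lambda$ exactly. The paper uses this, taking $s^*_\lambda$ to be the principal specialization $s_\lambda(q^{n-1},q^{n-3},\ldots,q^{1-n})$.

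\textbf{Your box formula gives the transpose.} Take the formula you quote literally, with $t_{\rm LZ}^{1/2}=q^{-1}$ and $\nu_{\rm LZ}^{1/2}=q^{-n}$. Each factor becomes $\{n-c(x)\}/\{h(x)\}$, where $c(x)$ is the content. The product is then $\dim_qV_{\lambda'}$, not $\dim_qV_\lambda$. So as written, the formula does not support the identification you need.

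\textbf{The sanity check has the wrong exponent.} In the $N=1$, $p=0$ test, the $j=1$ exponent is $1-3-(n-1)=-n-1$, not $-n-2$. With $-n-2$ your displayed identity is false, since it already fails at $n=2$. With $-n-1$ it holds, because $q^{1-n}\{n+1\}-q^{-n-1}\{n-1\}=q^2-q^{-2}=\{2\}$. Done correctly, this odd-$N$ test is exactly what rules out $\epsilon=-1$.
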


\begin{proof}
Let \(k=2p+1\).
For the partition \((N)\),
\[
\kappa_{(N)}
=
N(N-1).
\]
After the specialization
\[
t_{\rm LZ}=q^{-2},
\qquad
\nu_{\rm LZ}=q^{-2n},
\]
the prefactor in \eqref{eq:LZ-special} becomes
\[
t_{\rm LZ}^{k\kappa_{(N)}}
\nu_{\rm LZ}^{kN/2}
=
q^{-2kN(N-1)}
q^{-nkN}.
\]
Furthermore,
\[
t_{\rm LZ}^{-k\kappa_\lambda/4}
=
q^{k\kappa_\lambda/2}.
\]
The specialized Schur function is the quantum dimension
\[
s_\lambda^*
=
s_\lambda(q^{n-1},q^{n-3},\ldots,q^{-(n-1)})
=
\dim_qV_\lambda.
\]

For
\[
\lambda=(2N-j,j),
\]
we have
\begin{align*}
\kappa_\lambda
&=
(2N-j)(2N-j-1)+j(j-3)
\\
&=
4N^2-4Nj+2j^2-2N-2j.
\end{align*}
Therefore
\begin{align*}
\frac{\kappa_\lambda}{2}
-2N(N-1)-nN
&=
\left(
2N^2-2Nj+j^2-N-j
\right)
\\
&\qquad
-2N^2+2N-nN
\\
&=
j^2-(2N+1)j-(n-1)N.
\end{align*}

Consequently,
\[
W_{K_p;(N)}
=
\sum_{j=0}^{N}
(-1)^j
q^{k(j^2-(2N+1)j-(n-1)N)}
\dim_qV_{(2N-j,j)}.
\]
By the normalization \eqref{eq:reduced-normalization},
\[
J_N^{SU(n)}(K_p;q)
=
\sum_{j=0}^{N}
(-1)^j
q^{k(j^2-(2N+1)j-(n-1)N)}
\frac{\dim_qV_{(2N-j,j)}}{\dim_qV_{(N)}}.
\]
Using \eqref{eq:R-def} proves the proposition.
\end{proof}

\subsection{Newton coefficients for the torus knots}
\label{sec:explicit-H}

Substituting Proposition~\ref{prop:torus-J} into the universal
Newton formula gives an expression for the desired coefficient.

\begin{proposition}\label{prop:H-double-sum}
For \(m\geq0\),
\begin{align}
H_m^{(n,p)}
={}&
\sum_{i=0}^{m}\sum_{j=0}^{i}
(-1)^{m-i+j}
q^{(2p+1)
\left(
j^2-(2i+1)j-(n-1)i
\right)}
\nonumber\\
&\quad\times
\frac{
\curly{n+2i}
\curly{2i-2j+1}
\curly{n+2i-j-1}!
\curly{n+j-2}!
}{
\curly{m-i}!
\curly{n+i+m}!
\curly{2i-j+1}!
\curly{j}!
\curly{n-2}!
}.
\label{eq:H-double-sum}
\end{align}
\end{proposition}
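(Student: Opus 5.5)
This is a direct substitution, so I would prove it by composing the two earlier results and then checking that the factors match. First I invoke Proposition~\ref{prop:H-universal} with \(K=K_p\). This writes \(H_m^{(n,p)}\) as a sum over \(0\le i\le m\) of the weight
\[
(-1)^{m-i}\frac{\curly{n+i-1}!\curly{n+2i}}{\curly{i}!\curly{m-i}!\curly{n+i+m}!}
\]
times \(J_i^{SU(n)}(K_p;q)\). Next I replace each \(J_i^{SU(n)}(K_p;q)\) by the formula of Proposition~\ref{prop:torus-J} with \(N=i\). That formula is a sum over \(0\le j\le i\) of
\[
(-1)^j q^{(2p+1)(j^2-(2i+1)j-(n-1)i)}R_{i,j}^{(n)}.
\]
This already produces the double sum, together with the sign \((-1)^{m-i+j}\) and the \(q\)-power appearing in \eqref{eq:H-double-sum}.

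The only remaining work is to simplify the product of the Newton weight with \(R_{i,j}^{(n)}\), where \(R_{i,j}^{(n)}\) is taken from \eqref{eq:R-curly} with \(N=i\). Two cancellations occur.

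\begin{itemize}
\item The factor \(\curly{i}!\) in the numerator of \(R_{i,j}^{(n)}\) cancels against \(\curly{i}!\) in the denominator of the weight.
\item The factor \(\curly{n+i-1}!\) in the denominator of \(R_{i,j}^{(n)}\) cancels against \(\curly{n+i-1}!\) in the numerator of the weight.
\end{itemize}

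After these cancellations the numerator is
\[
\curly{n+2i}\,\curly{2i-2j+1}\,\curly{n+2i-j-1}!\,\curly{n+j-2}!,
\]
and the denominator is
\[
\curly{m-i}!\,\curly{n+i+m}!\,\curly{2i-j+1}!\,\curly{j}!\,\curly{n-2}!.
\]
This is exactly the summand in \eqref{eq:H-double-sum}.

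I expect no real obstacle. The one point needing care is the bookkeeping: substitute \(N=i\) consistently, including in the exponent \((2N+1)j\) and in \(\curly{2N-j+1}!\). One should also note that \(n\ge2\) makes \(\curly{n-2}!\) and \(\curly{n+j-2}!\) well defined.
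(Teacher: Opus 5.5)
Your proposal is correct and matches the paper's proof: substitute Proposition~\ref{prop:torus-J} with \(N=i\) into \eqref{eq:H-universal}, then insert \eqref{eq:R-curly}. The factors \(\curly{i}!\) and \(\curly{n+i-1}!\) cancel, which yields exactly the summand and sign \((-1)^{m-i+j}\) in \eqref{eq:H-double-sum}.
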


\begin{proof}
Substituting \eqref{eq:torus-J} with \(N=i\) into
\eqref{eq:H-universal} gives
\[
H_m^{(n,p)}
=
\sum_{i=0}^{m}\sum_{j=0}^{i}
(-1)^{m-i+j}
q^{(2p+1)(j^2-(2i+1)j-(n-1)i)}
\frac{\{n+i-1\}!\{n+2i\}}
{\{i\}!\{m-i\}!\{n+i+m\}!}
R_{i,j}^{(n)}.
\]
Substituting \eqref{eq:R-curly} and cancelling
\(\{i\}!\) and \(\{n+i-1\}!\) gives
\eqref{eq:H-double-sum}.
\end{proof}

The Laurent integrality of these coefficients is not apparent from
\eqref{eq:H-double-sum}.  We next reinterpret this expression in terms
of Bailey transforms.

\section{Bailey transforms for the torus knot coefficients}
\label{sec:bailey}

\subsection{The Newton transform as an ordinary Bailey transform}

Recall that \(Q=q^2\), and set
\[
a=Q^n.
\]

All matrices in this section are indexed by \(\mathbb Z_{\geq0}\) and
are lower triangular, so all matrix products below are well defined
entrywise by finite sums.

For \(c\) an indeterminate, define the ordinary Bailey matrix
relative to \(c\) by
\begin{equation}\label{eq:Bailey-matrix}
(M_c)_{m,i}
=
\frac{1}{
(Q;Q)_{m-i}(cQ;Q)_{m+i}},
\qquad
0\leq i\leq m.
\end{equation}

Thus a pair of sequences \((\alpha_i,\beta_i)\) is a Bailey pair
relative to \(c\), with base \(Q\), if
\[
\beta=M_c\alpha.
\]
This is the standard Bailey-pair normalization originating in
Bailey's work \cite{Bailey}; see also \cite{AndrewsQSeries}.  Equivalently,
\begin{equation}\label{eq:Bailey-pair}
\beta_m
=
\sum_{i=0}^{m}
\frac{\alpha_i}{
(Q;Q)_{m-i}(cQ;Q)_{m+i}}.
\end{equation}

We first rewrite the universal Newton kernel.

\begin{lemma}\label{lem:newton-Q-kernel}
For \(0\leq i\leq m\),
\begin{align}
&(-1)^{m-i}
\frac{
\curly{n+i-1}!\curly{n+2i}
}{
\curly{i}!\curly{m-i}!\curly{n+i+m}!
}
\nonumber\\
&\qquad=
(-1)^{m-i}
q^{m(m+n+1)+i(i-1)}
\frac{
(a;Q)_i(1-aQ^{2i})
}{
(Q;Q)_i(Q;Q)_{m-i}(a;Q)_{m+i+1}
}.
\label{eq:newton-Q-kernel}
\end{align}
\end{lemma}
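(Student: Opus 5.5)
The plan is to convert each curly factorial on the left of \eqref{eq:newton-Q-kernel} into $Q$-Pochhammer form with Lemma~\ref{lem:q-factorial-conversion}, and then collect signs and powers of $q$.

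\textbf{Converting the factors.} I will use \eqref{eq:q-factorial-conversion} for $\{i\}!$ and $\{m-i\}!$. For the ratio $\{n+i-1\}!/\{n+i+m\}!$ I will use \eqref{eq:factorial-ratio} with $u=n+i-1$ and $k=m+1$, which gives
\[
\frac{\{n+i-1\}!}{\{n+i+m\}!}=(-1)^{m+1}q^{(m+1)(n+i-1)+(m+1)(m+2)/2}\frac{1}{(Q^{n+i};Q)_{m+1}}.
\]
Since $a=Q^n$, I will rewrite
\[
(Q^{n+i};Q)_{m+1}=\frac{(a;Q)_{m+i+1}}{(a;Q)_i},
\]
which produces exactly the factor $(a;Q)_i/(a;Q)_{m+i+1}$ on the right-hand side. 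Finally, I will write $\{n+2i\}=-q^{-n-2i}(1-aQ^{2i})$.

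\textbf{Signs.} The conversions contribute $(-1)^{m+1}$ from the ratio, $(-1)$ from $\{n+2i\}$, and $(-1)^{i}(-1)^{m-i}=(-1)^m$ from the two factorials in the denominator, which sit in the denominator but have sign $\pm1$. The product is $(-1)^{2m+2}=1$. This leaves the outer $(-1)^{m-i}$ unchanged, as the statement requires.

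\textbf{Exponents of $q$.} This is the only real check. The total exponent is
\[
(m+1)(n+i-1)+\tfrac{(m+1)(m+2)}{2}-n-2i+\tfrac{i(i+1)}{2}+\tfrac{(m-i)(m-i+1)}{2},
\]
where the last two terms come from inverting the $q^{-r(r+1)/2}$ factors. Expanding, the $mn$, $mi$ and $i^2$ terms should combine to $m(m+n+1)+i(i-1)$. A quick check: at $m=i=0$ the exponent is $(n-1)+1-n=0$, which is consistent. I expect this bookkeeping to be the main, though routine, obstacle. I will verify it by full expansion and cross-check it at $i=0$ and $i=m$.
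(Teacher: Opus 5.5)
Your proposal is correct and follows essentially the same route as the paper: convert every quantum factorial via Lemma~\ref{lem:q-factorial-conversion}, write $\{n+2i\}=-q^{-n-2i}(1-aQ^{2i})$, and collect signs and $q$-powers. The only cosmetic difference is that you pair $\{n+i-1\}!$ with $\{n+i+m\}!$ through \eqref{eq:factorial-ratio}, whereas the paper converts them separately and uses $(a;Q)_r=(Q;Q)_{n+r-1}/(Q;Q)_{n-1}$; your signs cancel as claimed, and expanding your exponent gives exactly $m(m+n+1)+i(i-1)$, so the bookkeeping you deferred goes through.
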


\begin{proof}
Apply Lemma~\ref{lem:q-factorial-conversion} separately to all
quantum factorials.

First,
\[
\frac{\curly{n+i-1}!}{\curly{i}!}
=
(-1)^{n-1}
q^{-\frac{(n+i-1)(n+i)}2+\frac{i(i+1)}2}
\frac{(Q;Q)_{n+i-1}}{(Q;Q)_i}.
\]
Also,
\[
\frac{1}{\curly{m-i}!}
=
(-1)^{m-i}
q^{(m-i)(m-i+1)/2}
\frac{1}{(Q;Q)_{m-i}},
\]
and
\[
\frac{1}{\curly{n+i+m}!}
=
(-1)^{n+i+m}
q^{(n+i+m)(n+i+m+1)/2}
\frac{1}{(Q;Q)_{n+i+m}}.
\]
Finally,
\[
\curly{n+2i}
=
-q^{-n-2i}(1-Q^{n+2i})
=
-q^{-n-2i}(1-aQ^{2i}).
\]

Collecting all signs gives the sign displayed on the right-hand side
of \eqref{eq:newton-Q-kernel}.
For the \(q\)-power, direct simplification yields
\[
m(m+n+1)+i(i-1).
\]

For the \(Q\)-products, note that
\[
\frac{(Q;Q)_{n+i-1}}{(Q;Q)_i}
\cdot
\frac{1}{(Q;Q)_{n+i+m}}
=
\frac{(a;Q)_i}{
(Q;Q)_i(a;Q)_{m+i+1}
}.
\]
Indeed,
\[
(a;Q)_i
=
(Q^n;Q)_i
=
\frac{(Q;Q)_{n+i-1}}{(Q;Q)_{n-1}},
\]
whereas
\[
(a;Q)_{m+i+1}
=
\frac{(Q;Q)_{n+m+i}}{(Q;Q)_{n-1}}.
\]
Substitution gives \eqref{eq:newton-Q-kernel}.
\end{proof}

\begin{theorem}\label{thm:universal-bailey}
For an arbitrary knot \(K\), define
\begin{equation}\label{eq:alpha-universal}
\alpha_i^{(n)}(K)
=
(-1)^i
Q^{\binom{i}{2}}
\frac{1-aQ^{2i}}{1-a}
\frac{(a;Q)_i}{(Q;Q)_i}
J_i^{SU(n)}(K;q),
\end{equation}
and
\begin{equation}\label{eq:beta-universal}
\beta_m^{(n)}(K)
=
(-1)^m
q^{-m(m+n+1)}
H_m^{(n)}(K;q).
\end{equation}
Then
\[
\beta_m^{(n)}(K)
=
\sum_{i=0}^{m}
\frac{
\alpha_i^{(n)}(K)
}{
(Q;Q)_{m-i}(aQ;Q)_{m+i}
}.
\]
Equivalently,
\[
\beta^{(n)}(K)
=
M_a\alpha^{(n)}(K).
\]
\end{theorem}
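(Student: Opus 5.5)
The identity is a direct rewriting of Proposition~\ref{prop:H-universal} using the kernel conversion of Lemma~\ref{lem:newton-Q-kernel}. No new summation is needed, only bookkeeping of signs, powers of \(q\), and Pochhammer factors.

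First, substitute \eqref{eq:newton-Q-kernel} into \eqref{eq:H-universal}. This gives
\[
H_m^{(n)}(K;q)
=
q^{m(m+n+1)}
\sum_{i=0}^{m}
(-1)^{m-i}q^{i(i-1)}
\frac{(a;Q)_i(1-aQ^{2i})}{(Q;Q)_i(Q;Q)_{m-i}(a;Q)_{m+i+1}}
J_i^{SU(n)}(K;q).
\]
Multiplying both sides by \((-1)^mq^{-m(m+n+1)}\) removes the \(m\)-dependent prefactor. The left side becomes \(\beta_m^{(n)}(K)\) by \eqref{eq:beta-universal}, and inside the sum the sign becomes \((-1)^{m-i}(-1)^m=(-1)^i\).

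Second, match the remaining summand with \(\alpha_i^{(n)}(K)/\bigl((Q;Q)_{m-i}(aQ;Q)_{m+i}\bigr)\). Use \(q^{i(i-1)}=Q^{\binom i2}\) and the factorization
\[
(a;Q)_{m+i+1}=(1-a)(aQ;Q)_{m+i}.
\]
With these, the summand equals
\[
(-1)^iQ^{\binom i2}\frac{1-aQ^{2i}}{1-a}\frac{(a;Q)_i}{(Q;Q)_i}J_i^{SU(n)}(K;q)\cdot\frac{1}{(Q;Q)_{m-i}(aQ;Q)_{m+i}},
\]
which is exactly \(\alpha_i^{(n)}(K)\) from \eqref{eq:alpha-universal} times the Bailey kernel \eqref{eq:Bailey-matrix} with \(c=a\). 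Summing over \(i\) gives the statement \(\beta^{(n)}(K)=M_a\alpha^{(n)}(K)\), equivalently the componentwise identity in the theorem.

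The only delicate point is the correctness of Lemma~\ref{lem:newton-Q-kernel} itself, which we may assume. Beyond that, the one thing to check carefully is that the factor \(1-a\) coming from \((a;Q)_{m+i+1}\) is placed in the denominator of \(\alpha_i\) and not mistakenly absorbed into the kernel.
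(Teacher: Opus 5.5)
Your proposal is correct and follows the paper's proof essentially step for step. You substitute Lemma~\ref{lem:newton-Q-kernel} into \eqref{eq:H-universal}, use \(q^{i(i-1)}=Q^{\binom i2}\) and \((a;Q)_{m+i+1}=(1-a)(aQ;Q)_{m+i}\), and absorb the prefactor \((-1)^mq^{-m(m+n+1)}\) into \(\beta_m^{(n)}(K)\), with the sign simplification \((-1)^{m-i}(-1)^m=(-1)^i\).
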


\begin{proof}
Starting from \eqref{eq:H-universal}, insert
\eqref{eq:newton-Q-kernel}.  We obtain
\begin{align*}
H_m^{(n)}(K;q)
&=
(-1)^m
q^{m(m+n+1)}
\\
&\quad\times
\sum_{i=0}^{m}
\frac{
(-1)^iQ^{\binom{i}{2}}
(a;Q)_i(1-aQ^{2i})
J_i^{SU(n)}(K;q)
}{
(Q;Q)_i(Q;Q)_{m-i}(a;Q)_{m+i+1}
}.
\end{align*}
Since
\[
(a;Q)_{m+i+1}
=
(1-a)(aQ;Q)_{m+i},
\]
the summand becomes
\[
\frac{
(-1)^iQ^{\binom{i}{2}}
\frac{1-aQ^{2i}}{1-a}
\frac{(a;Q)_i}{(Q;Q)_i}
J_i^{SU(n)}(K;q)
}{
(Q;Q)_{m-i}(aQ;Q)_{m+i}
}.
\]
The numerator is precisely \(\alpha_i^{(n)}(K)\).
Multiplying by
\[
(-1)^m q^{-m(m+n+1)}
\]
and using \eqref{eq:beta-universal} gives
\[
\beta^{(n)}(K)=M_a\alpha^{(n)}(K).
\]
\end{proof}

Thus the universal Newton transform is an ordinary Bailey transform
relative to \(a=Q^n\).

\subsection{The torus-knot \(\alpha\)-sequence}
\label{sec:torus-alpha}

We next rewrite \(R_{i,j}^{(n)}\) in \(Q\)-notation.

\begin{lemma}\label{lem:R-Q}
Let \(a=Q^n\).  Then
\begin{equation}\label{eq:R-Q}
R_{i,j}^{(n)}
=
q^{-ni+i+2j}
\frac{
(1-Q^{2i-2j+1})
(aQ^i;Q)_{i-j}
(a/Q;Q)_j
(Q;Q)_i
}{
(Q;Q)_{2i-j+1}
(Q;Q)_j
}.
\end{equation}
\end{lemma}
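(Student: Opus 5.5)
The identity is a direct conversion of \eqref{eq:R-curly} into \(Q\)-notation, so the plan is to apply Lemma~\ref{lem:q-factorial-conversion} to each factor separately. We then track the three independent pieces: the sign, the power of \(q\), and the \(Q\)-Pochhammer content. The factors appearing in \eqref{eq:R-curly} (with \(N=i\)) are
\[
\{2i-2j+1\},\quad \{n+2i-j-1\}!,\quad \{n+j-2\}!,\quad \{i\}!,\quad \{2i-j+1\}!,\quad \{j\}!,\quad \{n-2\}!,\quad \{n+i-1\}!.
\]
Since all indices are nonnegative for \(0\leq j\leq i\) and \(n\geq2\), each factorial becomes \((-1)^r q^{-r(r+1)/2}(Q;Q)_r\). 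The single bracket is \(\{2i-2j+1\}=-q^{-(2i-2j+1)}(1-Q^{2i-2j+1})\).

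First I group the Pochhammer pieces. The ratio \((Q;Q)_{n+2i-j-1}/(Q;Q)_{n+i-1}\) equals \((Q^{n+i};Q)_{i-j}=(aQ^i;Q)_{i-j}\). The ratio \((Q;Q)_{n+j-2}/(Q;Q)_{n-2}\) equals \((Q^{n-1};Q)_j=(a/Q;Q)_j\). The remaining factors \((Q;Q)_i/\bigl((Q;Q)_{2i-j+1}(Q;Q)_j\bigr)\) and \((1-Q^{2i-2j+1})\) appear unchanged. This already reproduces the \(Q\)-product structure of \eqref{eq:R-Q}.

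For the sign, the exponents add to
\[
1+(n+2i-j-1)+(n+j-2)+i-(2i-j+1)-j-(n-2)-(n+i-1) \;=\; 0,
\]
so the total sign is \(+1\), as required.

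For the \(q\)-power, write \(T(r)=r(r+1)/2\). The exponent is
\[
-(2i-2j+1)-T(n+2i-j-1)-T(n+j-2)-T(i)+T(2i-j+1)+T(j)+T(n-2)+T(n+i-1).
\]
This has to simplify to \(-ni+i+2j\). It is the one step requiring care, since the quadratic terms must cancel completely. I would organise the check by computing the differences
\[
T(n+i-1)-T(n+2i-j-1)\quad\text{and}\quad T(n-2)-T(n+j-2)
\]
as arithmetic sums, namely minus sums of \(i-j\) and \(j\) consecutive integers respectively. The \(i,j\)-independent \(n^2\) parts then cancel immediately. After that one collects the quadratic terms in \(i\) and \(j\), confirms they vanish, and reads off the linear terms.

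The main obstacle is only this bookkeeping, and to guard against a slip I would sanity-check at \(i=j=0\). There \(R=1\), and the right-hand side of \eqref{eq:R-Q} gives \((1-Q)/(Q;Q)_1=1\), which agrees.
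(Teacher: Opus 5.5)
Your proposal is correct and follows essentially the same route as the paper. Both proofs convert the bracket $\{2i-2j+1\}$ and the quantum factorials via Lemma~\ref{lem:q-factorial-conversion}, regroup the Pochhammer ratios into $(aQ^i;Q)_{i-j}$ and $(a/Q;Q)_j$, and then check that the signs cancel and that the $q$-exponent reduces to $-ni+i+2j$. The paper applies \eqref{eq:factorial-ratio} to the two ratios directly instead of converting each factorial separately, which is only a bookkeeping difference. Your sign count is right, and carrying out the exponent computation you outline does give exactly $-ni+i+2j$, with all quadratic terms in $i,j$ cancelling.
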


\begin{proof}
Starting with \eqref{eq:R-curly}, write
\begin{align*}
R_{i,j}^{(n)}
&=
\curly{2i-2j+1}
\frac{\curly{n+2i-j-1}!}{\curly{n+i-1}!}
\frac{\curly{n+j-2}!}{\curly{n-2}!}
\frac{\curly{i}!}{
\curly{2i-j+1}!\curly{j}!
}.
\end{align*}

By \eqref{eq:factorial-ratio},
\begin{align*}
\frac{\curly{n+2i-j-1}!}{\curly{n+i-1}!}
&=
(-1)^{i-j}
q^{-(i-j)(n+i-1)
-(i-j)(i-j+1)/2}
(aQ^i;Q)_{i-j},
\\
\frac{\curly{n+j-2}!}{\curly{n-2}!}
&=
(-1)^j
q^{-j(n-2)-j(j+1)/2}
(a/Q;Q)_j.
\end{align*}
Furthermore,
\[
\curly{2i-2j+1}
=
-q^{-(2i-2j+1)}
(1-Q^{2i-2j+1}),
\]
and Lemma~\ref{lem:q-factorial-conversion} converts the remaining
factorials.

After cancellation of all signs, the \(Q\)-Pochhammer factors are
exactly
\[
\frac{
(1-Q^{2i-2j+1})
(aQ^i;Q)_{i-j}
(a/Q;Q)_j
(Q;Q)_i
}{
(Q;Q)_{2i-j+1}
(Q;Q)_j
}.
\]
The total \(q\)-exponent simplifies to
\[
-ni+i+2j.
\]
This proves \eqref{eq:R-Q}.
\end{proof}

We now insert \eqref{eq:R-Q} into the universal \(\alpha\)-sequence. For brevity, write
\[
\alpha_i^{(n,p)}
:=
\alpha_i^{(n)}(K_p).
\]

\begin{proposition}\label{prop:torus-alpha}
For \(K_p=T(2,2p+1)\),
\begin{align}
\alpha_i^{(n,p)}
={}&
a^{-(p+1)i}
Q^{-pi^2+i}
\frac{1-aQ^{2i}}{1-a}
(a;Q)_i
\nonumber\\
&\times
\sum_{s=0}^{i}
(-1)^s
Q^{ps(s+1)+\binom{s}{2}}
\frac{
(1-Q^{2s+1})
(aQ^i;Q)_s
(a/Q;Q)_{i-s}
}{
(Q;Q)_{i+s+1}
(Q;Q)_{i-s}
}.
\label{eq:torus-alpha}
\end{align}
\end{proposition}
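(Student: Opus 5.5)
The plan is a direct substitution followed by a reindexing of the inner sum; no summation identity is needed. Start from the definition \eqref{eq:alpha-universal} with \(K=K_p\), namely
\[
\alpha_i^{(n,p)}=(-1)^iQ^{\binom i2}\frac{1-aQ^{2i}}{1-a}\frac{(a;Q)_i}{(Q;Q)_i}\,J_i^{SU(n)}(K_p;q),
\]
and insert Proposition~\ref{prop:torus-J} with \(N=i\). Then replace each \(R_{i,j}^{(n)}\) by its \(Q\)-form from Lemma~\ref{lem:R-Q}. The factor \((Q;Q)_i\) in the numerator of \eqref{eq:R-Q} cancels against the \(1/(Q;Q)_i\) in \(\alpha_i\). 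This leaves the prefactor \(\frac{1-aQ^{2i}}{1-a}(a;Q)_i\) outside a single sum over \(0\le j\le i\).

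Next, reindex the sum by \(s=i-j\), so that \(s\) also runs over \(0\le s\le i\). Under this substitution:
\begin{itemize}
\item \((Q;Q)_{2i-j+1}\) becomes \((Q;Q)_{i+s+1}\);
\item \((Q;Q)_j\) becomes \((Q;Q)_{i-s}\);
\item \((aQ^i;Q)_{i-j}\) becomes \((aQ^i;Q)_s\);
\item \((a/Q;Q)_j\) becomes \((a/Q;Q)_{i-s}\);
\item \(1-Q^{2i-2j+1}\) becomes \(1-Q^{2s+1}\).
\end{itemize}
These are exactly the \(Q\)-Pochhammer factors appearing in \eqref{eq:torus-alpha}. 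For the sign, \((-1)^i\cdot(-1)^j=(-1)^{2i-s}=(-1)^s\).

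The only real bookkeeping is the \(q\)-exponent, which I would organise as follows. Write \(k=2p+1\). The torus-knot exponent gives \(j^2-(2i+1)j=s^2+s-i^2-i\), so the twist factor is
\[
q^{k(s(s+1)-i^2-ni)}=Q^{ps(s+1)}a^{-pi}Q^{-pi^2}\,q^{s(s+1)-i^2-ni}.
\]
Add to this the exponent \(-ni+i+2j=-ni+3i-2s\) coming from \(R_{i,j}^{(n)}\), and the exponent \(i(i-1)\) coming from \(Q^{\binom i2}\). The leftover \(q\)-exponent is then
\[
s^2-s-2ni+2i,
\]
that is, \(Q^{\binom s2}a^{-i}Q^{i}\). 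Combining everything yields
\[
a^{-(p+1)i}\,Q^{-pi^2+i}\,Q^{ps(s+1)+\binom s2},
\]
as claimed in \eqref{eq:torus-alpha}.

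The main (and only) obstacle is this exponent collection: one must correctly split the odd multiplier \(2p+1\) into \(2p\) (absorbed into powers of \(Q\) and \(a\)) plus \(1\) (which combines with the other half-integer-looking \(q\)-powers). I would double-check the final result numerically at \(i=0\) and \(i=1\) against \eqref{eq:alpha-universal}, using \(J_0=1\) and the value of \(J_1\) obtained from Proposition~\ref{prop:torus-J}.
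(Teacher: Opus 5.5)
Your proposal is correct and follows the paper's proof step for step. You substitute Proposition~\ref{prop:torus-J} and Lemma~\ref{lem:R-Q} into \eqref{eq:alpha-universal}, cancel $(Q;Q)_i$, reindex by $s=i-j$, and collect the sign $(-1)^{i+j}=(-1)^s$ and the $q$-exponent, which you correctly reduce to $a^{-(p+1)i}Q^{-pi^2+i}Q^{ps(s+1)+\binom s2}$. The only difference is cosmetic: you split $2p+1$ into $2p$ and $1$ before collecting exponents, whereas the paper adds up the full exponent and factors out a $2$ at the end.
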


\begin{proof}
By \eqref{eq:alpha-universal} and
Proposition~\ref{prop:torus-J},
\begin{align*}
\alpha_i^{(n,p)}
&=
(-1)^iQ^{\binom{i}{2}}
\frac{1-aQ^{2i}}{1-a}
\frac{(a;Q)_i}{(Q;Q)_i}
\\
&\quad\times
\sum_{j=0}^{i}
(-1)^j
q^{(2p+1)
(j^2-(2i+1)j-(n-1)i)}
R_{i,j}^{(n)}.
\end{align*}
Insert \eqref{eq:R-Q}.
The factor \((Q;Q)_i\) cancels.

Set
\[
s=i-j.
\]
Then \(j=i-s\), and
\[
(-1)^{i+j}
=
(-1)^{2i-s}
=
(-1)^s.
\]
Moreover,
\begin{align}
j^2-(2i+1)j-(n-1)i
&=
(i-s)^2-(2i+1)(i-s)-(n-1)i
\nonumber\\
&=
-i^2-ni+s^2+s.
\end{align}

The additional \(q\)-powers are
\[
Q^{\binom{i}{2}}
=
q^{i(i-1)}
\]
and, from \eqref{eq:R-Q},
\[
q^{-ni+i+2j}
=
q^{-ni+3i-2s}.
\]
Therefore the total exponent of \(q\) equals
\begin{align*}
&i(i-1)-ni+3i-2s
\\
&\qquad
+(2p+1)(-i^2-ni+s^2+s)
\\
&=
2\left[
-pi(i+n)-i(n-1)
+ps(s+1)+\binom{s}{2}
\right].
\end{align*}
Since \(Q=q^2\), this is
\[
Q^{-pi(i+n)-i(n-1)
+ps(s+1)+\binom{s}{2}}.
\]

Finally,
\[
Q^{-pi(i+n)-i(n-1)}
=
a^{-(p+1)i}Q^{-pi^2+i},
\]
because \(a=Q^n\).

The remaining Pochhammer factors become
\[
(aQ^i;Q)_s,
\qquad
(a/Q;Q)_{i-s},
\]
and
\[
(Q;Q)_{2i-j+1}
=
(Q;Q)_{i+s+1}.
\]
Substitution gives \eqref{eq:torus-alpha}.
\end{proof}

\subsection{The well-poised Bailey kernel}
\label{sec:wp}

Well-poised Bailey pairs and their transformations were developed in a
systematic form by Andrews--Berkovich and subsequent authors; see
\cite{AndrewsBerkovich,McLaughlin,Warnaar}. 
 We use the following standard WP-Bailey kernel.

Define
\begin{equation}\label{eq:A-p}
A_s^{(p)}
=
(-1)^s
Q^{\binom{s}{2}+ps(s+1)}
\frac{1-Q^{2s+1}}{1-Q}.
\end{equation}
Also define
\begin{equation}\label{eq:W-matrix}
(W_{Q,a})_{i,s}
=
\frac{
(a/Q;Q)_{i-s}(a;Q)_{i+s}
}{
(Q;Q)_{i-s}(Q^2;Q)_{i+s}
},
\qquad
0\leq s\leq i.
\end{equation}

In the standard notation for WP-Bailey pairs, this is the kernel with
base \(Q\) and parameters
\[
a_{\mathrm{WP}}=Q,
\qquad
k_{\mathrm{WP}}=a.
\]

Let
\begin{equation}\label{eq:B-def}
B^{(p)}
=
W_{Q,a}A^{(p)}.
\end{equation}
Thus
\begin{equation}\label{eq:B-expanded}
B_i^{(p)}
=
\sum_{s=0}^{i}
\frac{
(a/Q;Q)_{i-s}(a;Q)_{i+s}
}{
(Q;Q)_{i-s}(Q^2;Q)_{i+s}
}
A_s^{(p)}.
\end{equation}

\begin{lemma}\label{lem:alpha-B}
The torus-knot sequence in
Proposition~\ref{prop:torus-alpha} can be written as
\begin{equation}\label{eq:alpha-B}
\alpha_i^{(n,p)}
=
a^{-pi}Q^{-pi^2}\Gamma_i^{(p)},
\end{equation}
where
\begin{equation}\label{eq:Gamma}
\Gamma_i^{(p)}
=
a^{-i}Q^i
\frac{1-aQ^{2i}}{1-a}
B_i^{(p)}.
\end{equation}
\end{lemma}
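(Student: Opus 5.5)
Since the lemma is a purely algebraic identity, I would compare \eqref{eq:torus-alpha} with \eqref{eq:alpha-B}--\eqref{eq:Gamma} factor by factor. Substituting \eqref{eq:Gamma} into \eqref{eq:alpha-B} gives the right-hand side
\[
a^{-(p+1)i}Q^{-pi^2+i}\frac{1-aQ^{2i}}{1-a}\,B_i^{(p)}.
\]
This prefactor is exactly the one in \eqref{eq:torus-alpha}, except for the factor \((a;Q)_i\). Hence it suffices to prove, termwise in \(s\), that
\[
(a;Q)_i\,
\frac{(aQ^i;Q)_s(a/Q;Q)_{i-s}}{(Q;Q)_{i+s+1}(Q;Q)_{i-s}}(1-Q^{2s+1})
=
\frac{(a/Q;Q)_{i-s}(a;Q)_{i+s}}{(Q;Q)_{i-s}(Q^2;Q)_{i+s}}\,\frac{1-Q^{2s+1}}{1-Q}.
\]
Together with this, the sign and power \((-1)^sQ^{\binom s2+ps(s+1)}\) in \eqref{eq:torus-alpha} match the corresponding factor of \(A_s^{(p)}\) in \eqref{eq:A-p}.

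The key steps are two elementary Pochhammer concatenations. First,
\[
(a;Q)_i\,(aQ^i;Q)_s=(a;Q)_{i+s},
\]
which absorbs the extra factor \((a;Q)_i\). Second,
\[
(Q;Q)_{i+s+1}=(1-Q)\,(Q^2;Q)_{i+s},
\]
so that
\[
\frac{1-Q^{2s+1}}{(Q;Q)_{i+s+1}}=\frac{1}{(Q^2;Q)_{i+s}}\cdot\frac{1-Q^{2s+1}}{1-Q}.
\]
The factors \((a/Q;Q)_{i-s}\) and \((Q;Q)_{i-s}\) already appear identically on both sides.

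Summing over \(0\le s\le i\) then reproduces \(\sum_s (W_{Q,a})_{i,s}A_s^{(p)}=B_i^{(p)}\) by \eqref{eq:B-expanded}, and this establishes \eqref{eq:alpha-B}.

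I do not expect a genuine obstacle. The main risk is bookkeeping: making sure the powers \(a^{-pi}Q^{-pi^2}\) and \(a^{-i}Q^i\) combine to \(a^{-(p+1)i}Q^{-pi^2+i}\), and that the \((1-Q)^{-1}\) normalization in \(A_s^{(p)}\) is accounted for exactly once. Both should be checked explicitly.
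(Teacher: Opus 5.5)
Your proposal is correct and follows the paper's proof: both rest on the concatenations $(a;Q)_{i+s}=(a;Q)_i(aQ^i;Q)_s$ and $(Q;Q)_{i+s+1}=(1-Q)(Q^2;Q)_{i+s}$. These identify $B_i^{(p)}$ with $(a;Q)_i$ times the sum in \eqref{eq:torus-alpha}, after which the prefactors $a^{-pi}Q^{-pi^2}\cdot a^{-i}Q^i=a^{-(p+1)i}Q^{-pi^2+i}$ match; the only cosmetic difference is that you absorb $(a;Q)_i$ into each summand, whereas the paper factors it out of $B_i^{(p)}$.
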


\begin{proof}
By \eqref{eq:A-p},
\[
A_s^{(p)}
=
(-1)^s
Q^{ps(s+1)+\binom{s}{2}}
\frac{1-Q^{2s+1}}{1-Q}.
\]
Also,
\[
(a;Q)_{i+s}
=
(a;Q)_i(aQ^i;Q)_s
\]
and
\[
(Q^2;Q)_{i+s}
=
\frac{(Q;Q)_{i+s+1}}{1-Q}.
\]
Therefore
\begin{align*}
B_i^{(p)}
&=
(a;Q)_i
\sum_{s=0}^{i}
(-1)^s
Q^{ps(s+1)+\binom{s}{2}}
\\
&\quad\times
\frac{
(1-Q^{2s+1})
(aQ^i;Q)_s
(a/Q;Q)_{i-s}
}{
(Q;Q)_{i+s+1}
(Q;Q)_{i-s}
}.
\end{align*}
Comparing with \eqref{eq:torus-alpha} gives
\[
\alpha_i^{(n,p)}
=
a^{-(p+1)i}Q^{-pi^2+i}
\frac{1-aQ^{2i}}{1-a}
B_i^{(p)}.
\]
Factoring
\[
a^{-pi}Q^{-pi^2}
\]
leaves precisely \(\Gamma_i^{(p)}\) as in
\eqref{eq:Gamma}.
\end{proof}

\subsection{Bailey chains for the torus-knot sequence}
\label{sec:bailey-chain}

For \(c\neq0\), define
\begin{equation}\label{eq:T-def}
(\mathcal T_cf)_m
=
\sum_{j=0}^{m}
\frac{
c^jQ^{j^2}
}{
(Q;Q)_{m-j}
}
f_j.
\end{equation}

We use the following standard limiting Bailey lemma \cite{AndrewsQSeries}.
\begin{lemma}\label{lem:limiting-bailey}
Suppose that
\[
\beta=M_c\alpha.
\]
If
\[
\alpha'_j=c^jQ^{j^2}\alpha_j,
\]
then
\[
\mathcal T_c\beta=M_c\alpha'.
\]
\end{lemma}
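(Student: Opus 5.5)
We need to prove that \(\mathcal T_c\beta = M_c\alpha'\) whenever \(\beta = M_c\alpha\) and \(\alpha'_j = c^jQ^{j^2}\alpha_j\). Since both sides are linear in \(\alpha\), it is equivalent to the matrix identity
\[
\mathcal T_c M_c = M_c\,\operatorname{diag}\bigl(c^jQ^{j^2}\bigr)_{j\geq0},
\]
where \(\mathcal T_c\) is viewed as the lower-triangular matrix with entries \((\mathcal T_c)_{m,j}=c^jQ^{j^2}/(Q;Q)_{m-j}\). All matrices are lower triangular, so every product is a finite sum. Comparing the \((m,i)\) entries reduces the claim to the finite identity
\[
\sum_{j=i}^{m}\frac{c^jQ^{j^2}}{(Q;Q)_{m-j}(Q;Q)_{j-i}(cQ;Q)_{j+i}}
=\frac{c^iQ^{i^2}}{(Q;Q)_{m-i}(cQ;Q)_{m+i}},\qquad 0\le i\le m.
\]
This is the \(\rho_1,\rho_2\to\infty\) limit of Bailey's lemma, but I would prove it directly.

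First set \(j=i+r\) and \(M=m-i\), and divide both sides by \(c^iQ^{i^2}\). Using \((cQ;Q)_{2i+r}=(cQ;Q)_{2i}(cQ^{2i+1};Q)_r\), \((cQ;Q)_{m+i}=(cQ;Q)_{2i}(cQ^{2i+1};Q)_M\), and \(Q^{(i+r)^2}=Q^{i^2}Q^{2ir+r^2}\), the identity becomes
\[
\sum_{r=0}^{M}\frac{(cQ^{2i})^rQ^{r^2}}{(Q;Q)_{M-r}(Q;Q)_r(cQ^{2i+1};Q)_r}
=\frac{1}{(Q;Q)_M(cQ^{2i+1};Q)_M}.
\]
Put \(x=cQ^{2i}\) and multiply through by \((Q;Q)_M\). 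Then
\[
\frac{(Q;Q)_M}{(Q;Q)_{M-r}(Q;Q)_r}=\Gauss{M}{r}=\frac{(Q^{-M};Q)_r}{(Q;Q)_r}(-1)^rQ^{Mr-\binom r2},
\]
so the left side becomes
\[
\sum_r\frac{(Q^{-M};Q)_r}{(Q;Q)_r(xQ;Q)_r}(-1)^rQ^{Mr-\binom r2}x^rQ^{r^2},
\]
and the target value is \(1/(xQ;Q)_M\).

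This is the terminating \(q\)-Chu--Vandermonde sum. I would deduce it from Lemma~\ref{lem:degenerate-6phi5}, which is the step I expect to require the most care. In \eqref{eq:degenerate-6phi5}, take \(A=x\) and let \(b\to\infty\). The factor \((b;Q)_rb^{-r}\) tends to \((-1)^rQ^{\binom r2}\), and \(1/(AQ/b;Q)_r\to1\). The right side becomes \(\lim b^{-N}(AQ;Q)_N/(AQ/b;Q)_N\), which tends to \(0\) unless \(N=0\), so this route does not directly give the needed identity, and I would not force it. Instead I would prove the identity by induction on \(M\), using the \(q\)-Pascal recurrence \(\Gauss{M}{r}=\Gauss{M-1}{r}+Q^{M-r}\Gauss{M-1}{r-1}\) and checking the base case \(M=0\) directly. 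An alternative is to compare coefficients after expanding \(1/(xQ^{r+1};Q)_{M-r}\) via \eqref{eq:negative-q-binomial}.

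The cleanest proof, which I would try first, is the standard one for \(\sum_r \Gauss{M}{r}\frac{(-1)^rQ^{\binom r2}y^r}{(y;Q)_r}\)-type sums: write the summand as a telescoping difference in \(r\). Once the finite identity is established, reversing the substitutions gives the matrix identity, and hence the lemma.
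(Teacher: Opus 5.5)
Your reduction is correct. Comparing the $(m,i)$ entries of $\mathcal T_cM_c=M_c\operatorname{diag}(c^jQ^{j^2})$ gives exactly your finite identity. After $j=i+r$, $M=m-i$, $x=cQ^{2i}$, it is equivalent to
\[
S_M(x):=\sum_{r=0}^{M}\Gauss{M}{r}\frac{x^rQ^{r^2}}{(xQ;Q)_r}=\frac{1}{(xQ;Q)_M}.
\]
You are also right to abandon the $b\to\infty$ limit of Lemma~\ref{lem:degenerate-6phi5}. With $A=x$ it only reproduces the unit Bailey pair identity, whose sum equals $\delta_{N0}$.

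As written, though, this last identity is left as a list of possible methods, and it is the only nontrivial content of the proof. The induction you mention does work, provided you keep $x$ as a free parameter. The second form of $q$-Pascal, after $r=t+1$, gives
\[
S_M(x)=S_{M-1}(x)+\frac{xQ^M}{1-xQ}\,S_{M-1}(xQ),
\]
so the inductive hypothesis is used at $xQ$. It then closes via $\frac{1-xQ^M}{(xQ;Q)_M}+\frac{xQ^M}{(xQ;Q)_M}=\frac{1}{(xQ;Q)_M}$. Alternatively, $S_M(x)=1/(xQ;Q)_M$ is the $b\to\infty$ limit of the $q$-Chu--Vandermonde sum ${}_2\phi_1(Q^{-M},b;xQ;Q,xQ^{M+1}/b)=(xQ/b;Q)_M/(xQ;Q)_M$.

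The paper itself gives no proof. It cites the lemma as the $\rho_1,\rho_2\to\infty$ case of Bailey's lemma in Andrews' book, where the general lemma is proved via the $q$-Pfaff--Saalsch\"utz sum. Your argument is a self-contained verification of exactly that specialization. In the limit, the Saalsch\"utz sum degenerates to the confluent Vandermonde identity above, which needs nothing beyond $q$-Pascal. The citation places this step inside the general two-parameter Bailey machinery. Your route makes the step independent of it, at the cost of a short computation.
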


The corresponding unit Bailey pair relative to \(c\) is
\begin{equation}\label{eq:unit-Bailey}
\alpha_j^{\rm unit}(c)
=
(-1)^j
Q^{\binom{j}{2}}
\frac{1-cQ^{2j}}{1-c}
\frac{(c;Q)_j}{(Q;Q)_j},
\qquad
\beta_j^{\rm unit}
=
\delta_{j0}.
\end{equation}

In particular, when \(c=Q\),
\[
\frac{(c;Q)_j}{(Q;Q)_j}=1,
\]
and hence
\[
\alpha_j^{\rm unit}(Q)
=
(-1)^j
Q^{\binom{j}{2}}
\frac{1-Q^{2j+1}}{1-Q}
=
A_j^{(0)}.
\]
Thus
\begin{equation}\label{eq:M-Q-A0}
M_QA^{(0)}
=
\delta.
\end{equation}

Each limiting Bailey iteration relative to \(Q\) multiplies the
\(\alpha\)-sequence by
\[
Q^jQ^{j^2}
=
Q^{j(j+1)}.
\]
Therefore the sequence defined in \eqref{eq:A-p} is precisely the
result of \(p\) iterations.

\begin{lemma}\label{lem:Ap-Bailey}
For every \(p\geq0\),
\begin{equation}\label{eq:MQAp}
M_QA^{(p)}
=
\mathcal T_Q^p\delta.
\end{equation}
\end{lemma}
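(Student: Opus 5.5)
Induction on \(p\), with the base case already in hand and the inductive step supplied by the limiting Bailey lemma, Lemma~\ref{lem:limiting-bailey}, with \(c=Q\). For \(p=0\), the identity \eqref{eq:M-Q-A0} says \(M_QA^{(0)}=\delta=\mathcal T_Q^0\delta\), since \(A^{(0)}_j=\alpha_j^{\rm unit}(Q)\) and \((\alpha^{\rm unit}(Q),\delta)\) is the unit Bailey pair relative to \(Q\).

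For the inductive step, assume \(M_QA^{(p)}=\mathcal T_Q^p\delta\); that is, \((A^{(p)},\mathcal T_Q^p\delta)\) is a Bailey pair relative to \(c=Q\). Lemma~\ref{lem:limiting-bailey} with \(c=Q\) gives
\[
\mathcal T_Q\bigl(\mathcal T_Q^p\delta\bigr)=M_Q\alpha',
\qquad
\alpha'_j=Q^jQ^{j^2}A_j^{(p)}=Q^{j(j+1)}A^{(p)}_j .
\]
From \eqref{eq:A-p}, multiplying by \(Q^{j(j+1)}\) raises the exponent \(\binom j2+pj(j+1)\) to \(\binom j2+(p+1)j(j+1)\) and leaves the other factors unchanged. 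Hence \(\alpha'=A^{(p+1)}\), and so \(M_QA^{(p+1)}=\mathcal T_Q^{p+1}\delta\).

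The only substantive input is that \((\alpha^{\rm unit}(c),\delta)\) is a Bailey pair relative to \(c\), which \eqref{eq:M-Q-A0} uses. If that should be verified rather than cited, I would check \(\sum_{i=0}^m \alpha_i^{\rm unit}(c)/\bigl((Q;Q)_{m-i}(cQ;Q)_{m+i}\bigr)=\delta_{m0}\). After writing \(1/(Q;Q)_{m-i}=(Q^{-m};Q)_i(-1)^iQ^{mi-\binom i2}/(Q;Q)_m\) and \(1/(cQ;Q)_{m+i}=1/\bigl((cQ;Q)_m(cQ^{m+1};Q)_i\bigr)\), the sum becomes the left-hand side of \eqref{eq:degenerate-6phi5} with \(A=c\), \(N=m\) and \(b\to\infty\). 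In that limit the right-hand side \(b^{-N}(AQ;Q)_N/(AQ/b;Q)_N\) tends to \(0\) for \(N\ge1\) and to \(1\) for \(N=0\); this limit is the step I expect to need the most care.

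Lemma~\ref{lem:limiting-bailey} itself is cited as standard. The one point to watch there is that \(\mathcal T_c\) as defined in \eqref{eq:T-def} matches the \(\rho_1,\rho_2\to\infty\) normalization of Bailey's lemma; I take this as given, as the paper does.
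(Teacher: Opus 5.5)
Your proposal is correct and matches the paper's proof: induction on \(p\), with base case \eqref{eq:M-Q-A0} and inductive step given by Lemma~\ref{lem:limiting-bailey} at \(c=Q\), using \(A^{(p+1)}_j=Q^{j(j+1)}A^{(p)}_j\). Your optional check of the unit Bailey pair is also sound and goes slightly beyond what the paper writes: taking \(b\to\infty\) in \eqref{eq:degenerate-6phi5} with \(A=c\), \(N=m\) turns the left side into the unit-pair sum and the right side into \(\delta_{m0}\).
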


\begin{proof}
For \(p=0\), this is \eqref{eq:M-Q-A0}.
Suppose it holds for \(p\).
By definition,
\[
A_j^{(p+1)}
=
Q^{j(j+1)}A_j^{(p)}
=
Q^jQ^{j^2}A_j^{(p)}.
\]
By Lemma~\ref{lem:limiting-bailey} with \(c=Q\),
\[
M_QA^{(p+1)}
=
\mathcal T_Q(M_QA^{(p)}).
\]
Using the induction hypothesis,
\[
M_QA^{(p+1)}
=
\mathcal T_Q^{p+1}\delta.
\]
\end{proof}

\section{WP-Bailey diagonalization}
\label{sec:diagonalization}

\subsection{The WP-Bailey diagonalization identity}

The following identity relates the ordinary Bailey matrices to the WP-Bailey kernel.

Define the diagonal matrices
\begin{equation}\label{eq:D-def}
D
=
\operatorname{diag}\left(
a^{-i}Q^i\frac{1-aQ^{2i}}{1-a}
\right)_{i\geq0}
\end{equation}

Define also
\begin{equation}\label{eq:S-def}
S
=
\operatorname{diag}\left(
\left(\frac Qa\right)^m
\right)_{m\geq0}.
\end{equation}

\begin{theorem}[WP-Bailey diagonalization]
\label{thm:WP-diagonalization}
Let \(Q\) and \(a\) be algebraically independent indeterminates.
Then, in the algebra of lower-triangular matrices over \(\Q(Q,a)\),
\begin{equation}\label{eq:WP-diagonalization}
M_aDW_{Q,a}
=
SM_Q.
\end{equation}

Equivalently, for \(0\leq s\leq m\),
\begin{align}
&\sum_{i=s}^{m}
\frac{
a^{-i}Q^i(1-aQ^{2i})
}{
1-a
}
\frac{
(a/Q;Q)_{i-s}(a;Q)_{i+s}
}{
(Q;Q)_{m-i}
(Q;Q)_{i-s}
(aQ;Q)_{m+i}
(Q^2;Q)_{i+s}
}
\nonumber\\
&\hspace{20mm}
=
\left(\frac Qa\right)^m
\frac{1}{
(Q;Q)_{m-s}
(Q^2;Q)_{m+s}
}.
\label{eq:WP-entry}
\end{align}
\end{theorem}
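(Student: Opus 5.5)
The plan is to prove the entrywise identity \eqref{eq:WP-entry} directly. The key observation is that after the shift \(i=s+r\) the left-hand side becomes a terminating very-well-poised sum, which Lemma~\ref{lem:degenerate-6phi5} evaluates.

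\textbf{Setting up the sum.} Fix \(0\leq s\leq m\), put \(i=s+r\) with \(0\leq r\leq m-s=:N\), and take
\[
A=aQ^{2s},
\]
so that \(1-aQ^{2i}=1-AQ^{2r}\). We then express every factor in terms of \(A\) and \(r\):
\begin{itemize}
\item \((a;Q)_{i+s}=(a;Q)_{2s}(A;Q)_r\);
\item \((aQ;Q)_{m+i}=(aQ;Q)_{m+s}\,(AQ^{m-s+1};Q)_r=(aQ;Q)_{m+s}(AQ^{N+1};Q)_r\);
\item \((Q^2;Q)_{i+s}=(Q^2;Q)_{2s}\,(Q^{2s+2};Q)_r\);
\item \(1/(Q;Q)_{m-i}=1/(Q;Q)_{N-r}=(Q^{-N};Q)_r(-1)^rQ^{Nr-\binom r2}/(Q;Q)_N\).
\end{itemize}
The last factor \((a/Q;Q)_r/(Q;Q)_r\) is already in the desired form. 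We want to match it with \((b;Q)_r/(AQ/b;Q)_r\) together with \((Q^{2s+2};Q)_r\). Choosing \(b=a/Q\) gives \(AQ/b=Q^{2s+2}\), and \((Q;Q)_r\) is the usual \((Q;Q)_r\) in the denominator. This matches perfectly.

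\textbf{Applying the summation.} The summand is now
\[
\frac{1-AQ^{2r}}{1-A}\,\frac{(A,b,Q^{-N};Q)_r}{(Q,AQ/b,AQ^{N+1};Q)_r}\,(-1)^rQ^{Nr-\binom r2}\times(\text{power } (Q/a)^r),
\]
where the power comes from \(a^{-i}Q^i=a^{-s}Q^s(Q/a)^r\). Since \(b^{-r}=(Q/a)^r\), this is exactly the summand of Lemma~\ref{lem:degenerate-6phi5}. The lemma evaluates the sum as
\[
b^{-N}\,\frac{(AQ;Q)_N}{(AQ/b;Q)_N}=(Q/a)^{m-s}\,\frac{(aQ^{2s+1};Q)_{m-s}}{(Q^{2s+2};Q)_{m-s}}.
\]

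\textbf{Collecting prefactors.} The \(r\)-independent prefactor is
\[
\frac{a^{-s}Q^s\,(1-aQ^{2s})}{1-a}\cdot\frac{(a/Q;Q)_0\,(a;Q)_{2s}}{(Q;Q)_N\,(aQ;Q)_{m+s}\,(Q^2;Q)_{2s}}.
\]
We simplify it in three steps:
\begin{itemize}
\item \((a;Q)_{2s}(1-aQ^{2s})/(1-a)=(aQ;Q)_{2s}\);
\item \((aQ;Q)_{2s}(aQ^{2s+1};Q)_{m-s}=(aQ;Q)_{m+s}\), which cancels the denominator;
\item \((Q^2;Q)_{2s}(Q^{2s+2};Q)_{m-s}=(Q^2;Q)_{m+s}\).
\end{itemize}
The powers combine as \((Q/a)^s(Q/a)^{m-s}=(Q/a)^m\). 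This yields exactly the right side of \eqref{eq:WP-entry}.

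\textbf{Matrix form.} For the matrix statement, the \((m,s)\) entry of \(M_aDW_{Q,a}\) is \(\sum_i (M_a)_{m,i}D_i(W_{Q,a})_{i,s}\). This is precisely the left side of \eqref{eq:WP-entry}, and \((SM_Q)_{m,s}\) is the right side. Both matrices vanish for \(s>m\) by lower-triangularity.

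\textbf{Main obstacle.} The step I expect to need the most care is the conversion of \(1/(Q;Q)_{N-r}\) into \((Q^{-N};Q)_r\) with the correct sign and \(Q\)-power. The standard identity is
\[
(Q^{-N};Q)_r=(-1)^rQ^{\binom r2-Nr}\,\frac{(Q;Q)_N}{(Q;Q)_{N-r}},
\]
and we must check that the resulting factor \((-1)^rQ^{Nr-\binom r2}\) is exactly the one appearing in the degenerate \({}_6\phi_5\). With that verified, the rest is bookkeeping.
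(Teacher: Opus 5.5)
Your proposal is correct and follows the paper's proof essentially step for step. It uses the same substitution $i=s+r$, $N=m-s$, $A=aQ^{2s}$, $b=a/Q$, the same Pochhammer factorizations and conversion of $1/(Q;Q)_{N-r}$ via $(Q^{-N};Q)_r$, the same evaluation by Lemma~\ref{lem:degenerate-6phi5}, and the same cancellation of prefactors. The one cosmetic difference is that you write $(1-aQ^{2i})/(1-a)$ as $\frac{1-AQ^{2r}}{1-A}\cdot\frac{1-A}{1-a}$ from the start, while the paper multiplies the lemma by $1-A$ afterwards.
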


\begin{proof}
Fix \(0\leq s\leq m\).
Put
\[
N=m-s,
\qquad
r=i-s,
\]
so that
\[
0\leq r\leq N.
\]
Set
\[
A=aQ^{2s},
\qquad
b=\frac aQ.
\]

The left-hand side of \eqref{eq:WP-entry} is
\[
L_{m,s}
=
\sum_{r=0}^{N}\mathcal S_r,
\]
where substitution of \(i=s+r\) gives
\begin{align*}
\mathcal S_r
={}&
a^{-s-r}Q^{s+r}
\frac{1-aQ^{2s+2r}}{1-a}
\\
&\times
\frac{
(a/Q;Q)_r
(a;Q)_{2s+r}
}{
(Q;Q)_{N-r}
(Q;Q)_r
(aQ;Q)_{2s+N+r}
(Q^2;Q)_{2s+r}
}.
\end{align*}

Now use
\[
(a;Q)_{2s+r}
=
(a;Q)_{2s}(A;Q)_r,
\]
\[
(aQ;Q)_{2s+N+r}
=
(aQ;Q)_{2s+N}
(AQ^{N+1};Q)_r,
\]
and
\[
(Q^2;Q)_{2s+r}
=
(Q^2;Q)_{2s}
(Q^{2s+2};Q)_r.
\]
Hence
\begin{equation}\label{eq:L-factor}
L_{m,s}
=
C_{m,s}
\sum_{r=0}^{N}
\frac{
(Q/a)^r
(1-AQ^{2r})
(A;Q)_r(b;Q)_r
}{
(Q;Q)_{N-r}
(Q;Q)_r
(Q^{2s+2};Q)_r
(AQ^{N+1};Q)_r
},
\end{equation}
where
\begin{equation}\label{eq:C-ms}
C_{m,s}
=
\left(\frac Qa\right)^s
\frac{
(a;Q)_{2s}
}{
(1-a)
(aQ;Q)_{2s+N}
(Q^2;Q)_{2s}
}.
\end{equation}

Using
\begin{equation}\label{eq:qnegative-N}
\frac{1}{(Q;Q)_{N-r}}
=
\frac{(Q^{-N};Q)_r}{(Q;Q)_N}
(-1)^rQ^{Nr-\binom r2},
\end{equation}
which follows from
\[
(Q^{-N};Q)_r
=
(-1)^rQ^{-Nr+\binom r2}
\frac{(Q;Q)_N}{(Q;Q)_{N-r}},
\]

Substituting \eqref{eq:qnegative-N} into the sum in
\eqref{eq:L-factor}, we obtain
\begin{align}
&\frac1{(Q;Q)_N}
\sum_{r=0}^{N}
\frac{
(1-AQ^{2r})
(A,b,Q^{-N};Q)_r
}{
(Q,Q^{2s+2},AQ^{N+1};Q)_r
}
\nonumber\\
&\hspace{30mm}\times
(-1)^r
Q^{Nr-\binom r2}
\left(\frac Qa\right)^r.
\label{eq:pre-6phi5}
\end{align}

We apply Lemma~\ref{lem:degenerate-6phi5} with
\[
A=aQ^{2s},
\qquad
b=\frac aQ.
\]
Then
\[
\frac{AQ}{b}=Q^{2s+2}.
\]
Multiplying \eqref{eq:degenerate-6phi5} by \(1-A\) gives
\begin{align}
&\sum_{r=0}^{N}
\frac{
(1-AQ^{2r})(A,b,Q^{-N};Q)_r
}{
(Q,Q^{2s+2},AQ^{N+1};Q)_r
}
(-1)^rQ^{Nr-\binom r2}
\left(\frac Qa\right)^r
\nonumber\\
&\qquad
=
(1-A)
\left(\frac Qa\right)^N
\frac{(AQ;Q)_N}{(Q^{2s+2};Q)_N}.
\label{eq:6phi5-result}
\end{align}

Since
\[
A=aQ^{2s},
\]
we have
\[
(AQ;Q)_N
=
(aQ^{2s+1};Q)_N.
\]
Combining
\eqref{eq:L-factor},
\eqref{eq:C-ms}, and
\eqref{eq:6phi5-result} gives
\begin{align*}
L_{m,s}
={}&
\left(\frac Qa\right)^{s+N}
\frac{
(a;Q)_{2s}(1-aQ^{2s})
}{
(1-a)
}
\\
&\times
\frac{
(aQ^{2s+1};Q)_N
}{
(Q;Q)_N
(aQ;Q)_{2s+N}
(Q^2;Q)_{2s}
(Q^{2s+2};Q)_N
}.
\end{align*}

Now
\[
\frac{
(a;Q)_{2s}(1-aQ^{2s})
}{
1-a
}
=
(aQ;Q)_{2s},
\]
and
\[
(aQ;Q)_{2s}
(aQ^{2s+1};Q)_N
=
(aQ;Q)_{2s+N}.
\]
These factors cancel.

Similarly,
\[
(Q^2;Q)_{2s}
(Q^{2s+2};Q)_N
=
(Q^2;Q)_{2s+N}.
\]
Since
\[
s+N=m,
\qquad
2s+N=m+s,
\qquad
N=m-s,
\]
we obtain
\[
L_{m,s}
=
\left(\frac Qa\right)^m
\frac{1}{
(Q;Q)_{m-s}
(Q^2;Q)_{m+s}
}.
\]
This is precisely the right-hand side of
\eqref{eq:WP-entry}.
\end{proof}

\subsection{Reduction to an ordinary Bailey transition}
\label{sec:transition}

Recall
\[
B^{(p)}=W_{Q,a}A^{(p)}
\]
and
\[
\Gamma^{(p)}=DB^{(p)}.
\]
Theorem~\ref{thm:WP-diagonalization} gives
\begin{equation}\label{eq:Gamma-diagonalized}
M_a\Gamma^{(p)}
=
SM_QA^{(p)}.
\end{equation}
By Lemma~\ref{lem:Ap-Bailey},
\[
M_QA^{(p)}
=
\mathcal T_Q^p\delta.
\]
Thus
\begin{equation}\label{eq:Gamma-diagonalized2}
M_a\Gamma^{(p)}
=
S\mathcal T_Q^p\delta.
\end{equation}

On the other hand, by Lemma~\ref{lem:alpha-B},
\[
\Gamma_i^{(p)}
=
a^{pi}Q^{pi^2}
\alpha_i^{(n,p)}.
\]

Thus \(\Gamma^{(p)}\) is obtained from
\(\alpha^{(n,p)}\) by \(p\) successive limiting Bailey transformations
relative to \(a\).  
By Lemma~\ref{lem:limiting-bailey},
\[
M_a\Gamma^{(p)}
=
\mathcal T_a^p\beta^{(n,p)}.
\]

Recall that
\[
\beta^{(n,p)}
=
M_a\alpha^{(n,p)}.
\]

Combining this with \eqref{eq:Gamma-diagonalized2} yields
\begin{equation}\label{eq:T-a-p-beta}
\mathcal T_a^p\beta^{(n,p)}
=
S\mathcal T_Q^p\delta.
\end{equation}

The diagonal operator \(S\) has a particularly simple relation with
the Bailey operators.

\begin{lemma}\label{lem:TAS}
As operators on sequences,
\begin{equation}\label{eq:TAS}
\mathcal T_aS
=
\mathcal T_Q.
\end{equation}
Consequently,
\begin{equation}\label{eq:S-transition}
S
=
\mathcal T_a^{-1}\mathcal T_Q.
\end{equation}
\end{lemma}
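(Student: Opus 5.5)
This is a direct entrywise computation with the definition \eqref{eq:T-def}. Since $S$ is diagonal with entries $(Q/a)^j$, for any sequence $f$ we have
\[
(\mathcal T_aSf)_m=\sum_{j=0}^{m}\frac{a^jQ^{j^2}}{(Q;Q)_{m-j}}\Bigl(\frac Qa\Bigr)^jf_j
=\sum_{j=0}^{m}\frac{Q^jQ^{j^2}}{(Q;Q)_{m-j}}f_j=(\mathcal T_Qf)_m .
\]
The powers $a^j$ and $a^{-j}$ cancel term by term, which leaves the kernel of $\mathcal T_Q$. Thus the matrices satisfy $(\mathcal T_a S)_{m,j}=(\mathcal T_Q)_{m,j}$ for all $0\le j\le m$. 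Because all matrices involved are lower triangular, this is an identity of lower-triangular matrices over $\Q(Q,a)$, or over $\Q(q)$ after specializing $a=Q^n$.

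For the consequence \eqref{eq:S-transition}, I would use the invertibility of $\mathcal T_a$ noted in the Introduction. It is lower triangular with diagonal entries $a^mQ^{m^2}\neq0$, so it has a lower-triangular inverse, computed by forward substitution. Multiplying \eqref{eq:TAS} on the left by $\mathcal T_a^{-1}$ gives $S=\mathcal T_a^{-1}\mathcal T_Q$. Matrix products of lower-triangular matrices are finite sums entrywise, so associativity holds and no convergence issues arise.

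I do not expect any real obstacle. The only point that needs care is to stay within lower-triangular matrices, or equivalently sequences acted on by finite sums, so that inverting $\mathcal T_a$ is legitimate. One should also note that $a=Q^n$ is nonzero, so the diagonal entries of $\mathcal T_a$ are nonzero.
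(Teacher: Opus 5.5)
Your proposal is correct and follows the paper's proof: the same entrywise cancellation $a^j(Q/a)^j=Q^j$ gives $\mathcal T_aS=\mathcal T_Q$. Left-multiplying by the lower-triangular inverse of $\mathcal T_a$, whose diagonal entries $a^mQ^{m^2}$ are nonzero, then yields $S=\mathcal T_a^{-1}\mathcal T_Q$.
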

\begin{proof}
For any sequence \(f\),
\begin{align*}
(\mathcal T_aSf)_m
&=
\sum_{j=0}^{m}
\frac{a^jQ^{j^2}}{(Q;Q)_{m-j}}
\left(\frac Qa\right)^j f_j \\
&=
\sum_{j=0}^{m}
\frac{Q^{j^2+j}}{(Q;Q)_{m-j}}f_j
=
(\mathcal T_Qf)_m.
\end{align*}
Thus \(\mathcal T_aS=\mathcal T_Q\).  Since
\(\mathcal T_a\) is invertible over \(\mathbb Q(Q,a)\),
\[
S=\mathcal T_a^{-1}\mathcal T_Q.
\]
\end{proof}

We have now obtained the main operator formula.

\begin{theorem}\label{thm:beta-transition}
For \(K_p=T(2,2p+1)\),
\begin{equation}\label{eq:beta-transition}
\beta^{(n,p)}
=
\mathcal T_a^{-(p+1)}
\mathcal T_Q^{p+1}\delta,
\qquad
a=Q^n.
\end{equation}
\end{theorem}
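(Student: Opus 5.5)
The plan is to combine \eqref{eq:T-a-p-beta} with Lemma~\ref{lem:TAS}, keeping careful track of invertibility. By \eqref{eq:T-a-p-beta} we already know
\[
\mathcal T_a^p\beta^{(n,p)}=S\,\mathcal T_Q^p\delta .
\]
Substituting \(S=\mathcal T_a^{-1}\mathcal T_Q\) from \eqref{eq:S-transition} turns the right-hand side into \(\mathcal T_a^{-1}\mathcal T_Q^{p+1}\delta\). The operator \(\mathcal T_a\) is lower triangular with nonzero diagonal entries \(a^mQ^{m^2}\), so it is invertible on sequences over \(\Q(Q,a)\). Applying \(\mathcal T_a^{-p}\) to both sides then gives
\[
\beta^{(n,p)}=\mathcal T_a^{-(p+1)}\mathcal T_Q^{p+1}\delta .
\]

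Before this step, I will re-check the ingredients of \eqref{eq:T-a-p-beta}, since that is where the content lies.

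\emph{First ingredient.} Theorem~\ref{thm:universal-bailey} applied to \(K_p\) gives \(\beta^{(n,p)}=M_a\alpha^{(n,p)}\).

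\emph{Second ingredient.} Lemma~\ref{lem:alpha-B} gives \(\Gamma^{(p)}_i=a^{pi}Q^{pi^2}\alpha^{(n,p)}_i=(aQ^{0})^{\,}\!{}^{}\) \((a^iQ^{i^2})^p\alpha^{(n,p)}_i\). So \(p\) successive applications of Lemma~\ref{lem:limiting-bailey} with \(c=a\), by induction on \(p\) exactly as in Lemma~\ref{lem:Ap-Bailey}, yield \(M_a\Gamma^{(p)}=\mathcal T_a^p M_a\alpha^{(n,p)}=\mathcal T_a^p\beta^{(n,p)}\).

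\emph{Third ingredient.} Theorem~\ref{thm:WP-diagonalization} gives \(M_aDW_{Q,a}=SM_Q\). Since \(\Gamma^{(p)}=DW_{Q,a}A^{(p)}\), this yields \(M_a\Gamma^{(p)}=SM_QA^{(p)}\). By Lemma~\ref{lem:Ap-Bailey}, \(SM_QA^{(p)}=S\mathcal T_Q^p\delta\).

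The only subtle point is the specialization \(a=Q^n\). Theorem~\ref{thm:WP-diagonalization} is stated for \(a\) an indeterminate, while the torus-knot quantities live at \(a=Q^n\). I will argue that every matrix involved (\(M_a\), \(D\), \(W_{Q,a}\), \(S\), \(\mathcal T_a\)) is lower triangular, and that each entry of the identity is a finite rational identity. The denominators are \((aQ;Q)_{m+i}\), \(1-a\), and powers of \(a\), and none of these vanishes at \(a=Q^n\) with \(n\geq2\). Hence the identity specializes entrywise. Likewise \(\mathcal T_{Q^n}\) has diagonal entries \(Q^{nm+m^2}\neq0\), so its inverse exists over \(\Q(q)\) and the cancellation above is legitimate.

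I expect no real obstacle. The mild care needed is in justifying the specialization, and in checking that the operator identities are used as compositions of lower-triangular operators rather than as convergent infinite sums. Lower triangularity makes every entry a finite sum, so this check is routine.
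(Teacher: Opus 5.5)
Your proposal is correct and follows the paper's proof: you combine the identity $\mathcal T_a^p\beta^{(n,p)}=S\,\mathcal T_Q^p\delta$ with $S=\mathcal T_a^{-1}\mathcal T_Q$ and then invert $\mathcal T_a^p$, and you rebuild the ingredients of the first identity exactly as Section~5.2 does. Your remark that the WP-diagonalization specializes entrywise to $a=Q^n$, because no denominator vanishes there, is a useful check that the paper leaves implicit.
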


\begin{proof}
From \eqref{eq:T-a-p-beta},
\[
\beta^{(n,p)}
=
\mathcal T_a^{-p}
S\mathcal T_Q^p\delta.
\]
Using \eqref{eq:S-transition},
\begin{align*}
\beta^{(n,p)}
&=
\mathcal T_a^{-p}
\mathcal T_a^{-1}
\mathcal T_Q
\mathcal T_Q^p\delta
\\
&=
\mathcal T_a^{-(p+1)}
\mathcal T_Q^{p+1}\delta.
\end{align*}
\end{proof}

Thus Laurent integrality reduces to the integrality of the transition
operator
\[
\mathcal T_{Q^n}^{-r}\mathcal T_Q^r,
\qquad r\geq0.
\]

\section{Integral transitions between Bailey operators}
\label{sec:integrality}

We work with an independent Bailey parameter \(c\).  Set
\[
\mathscr R=\Z[Q^{\pm1},c^{\pm1}],
\qquad
\mathscr F=\Q(Q,c),
\]
where \(c\) is an indeterminate.
All formal power-series rings in this section are endowed with the \(z\)-adic topology.  
We identify a sequence \(f=(f_0,f_1,\ldots)\) with its generating series
\[
F_f(z)=\sum_{m\geq0}f_mz^m.
\]

\subsection{Factorization of a Bailey operator}

Put
\begin{equation}\label{eq:Phi-def}
\Phi(z)
=
\sum_{r\geq0}\frac{z^r}{(Q;Q)_r}
\in\Q(Q)[[z]].
\end{equation}
Since \(\Phi(0)=1\), multiplication by \(\Phi(z)\) is an automorphism
of \(\mathscr F[[z]]\).  Define
\[
\mathcal C F(z)=\Phi(z)F(z)
\]
and define the continuous diagonal automorphism \(\mathcal E_c\) by
\begin{equation}\label{eq:Ec-operator}
\mathcal E_c(z^m)=c^mQ^{m^2}z^m.
\end{equation}
The coefficient identity
\begin{equation}\label{eq:Phi-functional}
(1-z)\Phi(z)=\Phi(Qz)
\end{equation}
follows directly from
\((Q;Q)_r=(1-Q^r)(Q;Q)_{r-1}\).

\begin{lemma}\label{lem:T-factorization-general}
As continuous operators on \(\mathscr F[[z]]\),
\begin{equation}\label{eq:T-factorization-general}
\mathcal T_c=\mathcal C\mathcal E_c.
\end{equation}
\end{lemma}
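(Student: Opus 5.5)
The plan is to check the identity on the topological basis \(z^j\) of \(\mathscr F[[z]]\) and then extend by continuity and linearity.

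First I note that all three operators \(\mathcal T_c\), \(\mathcal C\), \(\mathcal E_c\) are \(\mathscr F\)-linear and continuous in the \(z\)-adic topology. For \(\mathcal T_c\), the coefficient of \(z^m\) in \(\mathcal T_c F\) depends only on \(f_0,\dots,f_m\), since the operator is lower triangular. For \(\mathcal C\) this holds because it is multiplication by a power series. For \(\mathcal E_c\) it holds because it is diagonal. Hence it suffices to verify \(\mathcal T_c(z^j)=\mathcal C\mathcal E_c(z^j)\) for each \(j\geq0\). Equivalently, one can compare the coefficient of \(z^m\) on both sides for an arbitrary sequence \(f\), which involves only finite sums.

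Next I compute both sides on \(f=(f_j)\). By \eqref{eq:Ec-operator}, \(\mathcal E_cF_f(z)=\sum_j c^jQ^{j^2}f_jz^j\). Multiplying by \(\Phi(z)=\sum_r z^r/(Q;Q)_r\) and extracting the coefficient of \(z^m\) gives the Cauchy product
\[
\sum_{j+r=m}\frac{1}{(Q;Q)_r}\,c^jQ^{j^2}f_j=\sum_{j=0}^{m}\frac{c^jQ^{j^2}}{(Q;Q)_{m-j}}f_j .
\]
This is exactly \((\mathcal T_cf)_m\) by \eqref{eq:T-def}. Identifying sequences with generating series then gives \eqref{eq:T-factorization-general}.

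There is no real obstacle here. The only point needing care is that the sequence operator \(\mathcal T_c\) is being regarded as an operator on \(\mathscr F[[z]]\) through \(f\mapsto F_f\), and that the composition order is \(\mathcal C\circ\mathcal E_c\), with \(\mathcal E_c\) applied first. The functional equation \eqref{eq:Phi-functional} is not needed for this step; I expect it to be used later, when conjugating \(\mathcal E_c\) past \(\mathcal C\).
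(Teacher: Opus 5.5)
Your proposal is correct and is essentially the paper's proof: the paper also extracts the coefficient of \(z^m\) in \(\mathcal C\mathcal E_cF\) as a Cauchy product and observes that it equals \((\mathcal T_cf)_m\). Your extra remarks on continuity and on the order of composition are accurate, though the coefficient comparison alone already suffices.
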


\begin{proof}
If \(F(z)=\sum_{j\geq0}f_jz^j\), then the coefficient of \(z^m\) in
\(\mathcal C\mathcal E_cF\) is
\[
\sum_{j=0}^{m}
\frac{c^jQ^{j^2}}{(Q;Q)_{m-j}}f_j,
\]
which is exactly \((\mathcal T_cf)_m\).
\end{proof}
Since both \(\mathcal C\) and \(\mathcal E_c\) are continuous
automorphisms of \(\mathscr F[[z]]\), so is \(\mathcal T_c\).

For \(\ell\in\Z\), let
\[
\tau_\ell F(z)=F(Q^\ell z).
\]

\begin{lemma}\label{lem:T-shift-general}
For every \(\ell\in\Z\),
\begin{equation}\label{eq:T-shift-general}
\mathcal T_{cQ^\ell}=\mathcal T_c\tau_\ell.
\end{equation}
\end{lemma}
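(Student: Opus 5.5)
By Lemma~\ref{lem:T-factorization-general}, we have \(\mathcal T_{cQ^\ell}=\mathcal C\mathcal E_{cQ^\ell}\) and \(\mathcal T_c=\mathcal C\mathcal E_c\). The identity \eqref{eq:T-shift-general} therefore reduces to the operator identity
\[
\mathcal E_{cQ^\ell}=\mathcal E_c\tau_\ell
\]
on \(\mathscr F[[z]]\). Since all operators involved are continuous and \(\mathscr F\)-linear, it suffices to check this on the monomials \(z^m\).

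On monomials the check is immediate:
\[
\mathcal E_c\tau_\ell(z^m)=\mathcal E_c(Q^{\ell m}z^m)=Q^{\ell m}c^mQ^{m^2}z^m=(cQ^\ell)^mQ^{m^2}z^m=\mathcal E_{cQ^\ell}(z^m).
\]
Composing on the left with \(\mathcal C\) gives \(\mathcal T_{cQ^\ell}=\mathcal C\mathcal E_c\tau_\ell=\mathcal T_c\tau_\ell\).

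Equivalently, one can compare coefficients directly. The coefficient of \(z^m\) in \(\mathcal T_c\tau_\ell F\) is
\[
\sum_{j\le m}\frac{c^jQ^{j^2}Q^{\ell j}f_j}{(Q;Q)_{m-j}},
\]
which is \((\mathcal T_{cQ^\ell}f)_m\).

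No step here is a real obstacle. The only point needing care is the order of composition: \(\tau_\ell\) must act before \(\mathcal E_c\), that is, on the right. This works because \(\tau_\ell\) is diagonal in the monomial basis and so commutes with \(\mathcal E_c\). It does not commute with \(\mathcal C\), so it cannot be moved past \(\mathcal C\). Continuity in the \(z\)-adic topology justifies extending from monomials to all power series.
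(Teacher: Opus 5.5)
Your proposal is correct and follows the paper's proof: both reduce via the factorization \(\mathcal T_c=\mathcal C\mathcal E_c\) to the identity \(\mathcal E_{cQ^\ell}=\mathcal E_c\tau_\ell\) and check it on monomials \(z^m\). Your direct coefficient comparison is a valid equivalent check, and you correctly note that \(\tau_\ell\) must act on the right.
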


\begin{proof}
On \(z^m\),
\[
\mathcal E_c\tau_\ell(z^m)
=
c^mQ^{m^2+\ell m}z^m
=
\mathcal E_{cQ^\ell}(z^m).
\]
Thus \(\mathcal E_{cQ^\ell}=\mathcal E_c\tau_\ell\), and
\eqref{eq:T-shift-general} follows from
Lemma~\ref{lem:T-factorization-general}.
\end{proof}

\subsection{An integral \(q\)-difference algebra}
Let \(\mathscr D_{\mathscr R}\) consist of all formal operators
\begin{equation}\label{eq:D-def-general}
\mathcal O
=
\sum_{s\geq0}\sum_{k\in\mathbb Z}
a_{s,k}z^s\tau_k,
\qquad
a_{s,k}\in\mathscr R,
\end{equation}
such that, for each fixed \(s\), only finitely many \(a_{s,k}\) are nonzero.
These operators act continuously on \(\mathscr R[[z]]\).
Indeed,
\[
z^s\tau_k(z^m)=Q^{km}z^{m+s},
\]
and the coefficient of a fixed power \(z^M\) receives contributions
only from \(s\leq M\).  Moreover,
\begin{equation}\label{eq:D-composition-general}
(z^s\tau_k)(z^t\tau_h)
=
Q^{kt}z^{s+t}\tau_{k+h}.
\end{equation}
For a fixed total power \(z^M\), only finitely many pairs \((s,t)\)
with \(s+t=M\) occur, and for each such pair only finitely many
dilation indices occur.  Hence \(\mathscr D_{\mathscr R}\) is closed
under composition.

\begin{lemma}\label{lem:C-conjugation-general}
For \(s\geq0\) and \(k\in\Z\),
\begin{equation}\label{eq:C-conjugation-general}
\mathcal C^{-1}z^s\tau_k\mathcal C
=
z^sP_k(z)\tau_k,
\qquad
P_k(z)=\frac{\Phi(Q^kz)}{\Phi(z)}.
\end{equation}
Moreover \(P_k(z)\in\Z[Q^{\pm1}][[z]]\), and hence
\begin{equation}\label{eq:C-preserves-D-general}
\mathcal C^{-1}\mathscr D_{\mathscr R}\mathcal C
\subseteq
\mathscr D_{\mathscr R}.
\end{equation}
\end{lemma}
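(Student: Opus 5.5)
The plan is to verify the conjugation formula by a direct computation on generating series, and then to prove integrality of \(P_k(z)\) from the functional equation \eqref{eq:Phi-functional}.

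\emph{Step 1: the conjugation formula.} For \(F\in\mathscr F[[z]]\) we have \(\mathcal C F=\Phi F\). Since \(\tau_k\) is a ring automorphism of \(\mathscr F[[z]]\),
\[
\tau_k\mathcal C F(z)=\Phi(Q^kz)F(Q^kz).
\]
Multiplying by \(z^s\) and then by \(\Phi(z)^{-1}\), which is legitimate because \(\Phi(0)=1\), gives
\[
\mathcal C^{-1}z^s\tau_k\mathcal C F
=z^s\frac{\Phi(Q^kz)}{\Phi(z)}F(Q^kz)
=z^sP_k(z)\tau_kF.
\]
This is \eqref{eq:C-conjugation-general}.

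\emph{Step 2: integrality of \(P_k\).} Iterating \eqref{eq:Phi-functional} gives \(\Phi(Q^kz)=(z;Q)_k\,\Phi(z)\) for \(k\geq0\). Hence
\[
P_k(z)=(z;Q)_k\in\Z[Q][z]\qquad(k\geq0).
\]
For \(k<0\), write \(k=-j\) and apply the same identity at the point \(Q^{-j}z\). This gives \(\Phi(z)=(Q^{-j}z;Q)_j\,\Phi(Q^{-j}z)\), so
\[
P_{-j}(z)=\frac{1}{(Q^{-j}z;Q)_j}=\prod_{i=1}^{j}\frac{1}{1-Q^{-i}z}.
\]
Each factor \(1/(1-Q^{-i}z)\) has the expansion \(\sum_r Q^{-ir}z^r\), which lies in \(\Z[Q^{-1}][[z]]\). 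Therefore \(P_k(z)\in\Z[Q^{\pm1}][[z]]\) for every \(k\in\Z\).

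\emph{Step 3: stability of \(\mathscr D_{\mathscr R}\).} Expand \(P_k(z)=\sum_{t\geq0}p_{k,t}z^t\) with \(p_{k,t}\in\Z[Q^{\pm1}]\). Then
\[
\mathcal C^{-1}z^s\tau_k\mathcal C=\sum_{t\geq0}p_{k,t}\,z^{s+t}\tau_k .
\]
A general element \(\mathcal O=\sum a_{s,k}z^s\tau_k\) of \(\mathscr D_{\mathscr R}\) therefore conjugates to
\[
\sum_{M\geq0}\sum_{k}\Bigl(\sum_{s\leq M}a_{s,k}\,p_{k,M-s}\Bigr)z^M\tau_k .
\]
The coefficients lie in \(\mathscr R\). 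It remains to check the finiteness condition: for fixed \(M\), only \(s\leq M\) contribute, and for each such \(s\) only finitely many \(k\) have \(a_{s,k}\neq0\). So only finitely many \(k\) occur with \(z^M\), and the conjugate lies in \(\mathscr D_{\mathscr R}\).

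\emph{Main obstacle.} The one point needing care is that conjugation, being continuous, may be applied termwise to the infinite sum \(\mathcal O\). I would handle this by noting that each coefficient of \(z^M\) in \(\mathcal O F\) involves only finitely many terms, so termwise conjugation is justified in the \(z\)-adic topology.
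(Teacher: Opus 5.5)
Your proof is correct and follows the paper's argument: the conjugation formula comes from direct computation, then iterating \eqref{eq:Phi-functional} gives $P_k(z)=(z;Q)_k$ for $k\geq0$ and $P_{-j}(z)=1/(Q^{-j}z;Q)_j$ for negative indices, and a local-finiteness check finishes the lemma. The differences are minor: you expand $1/(Q^{-j}z;Q)_j$ as a product of geometric series rather than through the negative $q$-binomial formula \eqref{eq:negative-q-binomial}, and you write out the finiteness bookkeeping and the termwise-conjugation justification that the paper covers in one line.
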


\begin{proof}
The conjugation formula follows directly from the definition of
\(\mathcal C\).  Iterating \eqref{eq:Phi-functional}, for \(k\geq0\)
we obtain
\[
P_k(z)=(z;Q)_k\in\Z[Q][z].
\]
If \(k=-d<0\), then
\[
P_{-d}(z)=\frac1{(Q^{-d}z;Q)_d}
=
\sum_{r=0}^{\infty}
Q^{-dr}\Gauss{d+r-1}{r}z^r
\in\Z[Q^{\pm1}][[z]],
\]
where the second equality is \eqref{eq:negative-q-binomial}.  Expanding
\(P_k(z)\) preserves the local-finiteness condition in
\eqref{eq:D-def-general}, which proves
\eqref{eq:C-preserves-D-general}.
\end{proof}

\begin{lemma}\label{lem:Ec-conjugation}
For \(s\geq0\) and \(k\in\Z\),
\begin{equation}\label{eq:Ec-conjugation}
\mathcal E_c^{-1}z^s\tau_k\mathcal E_c
=
c^{-s}Q^{-s^2}z^s\tau_{k-2s}.
\end{equation}
Consequently,
\begin{equation}\label{eq:Ec-preserves-D}
\mathcal E_c^{-1}\mathscr D_{\mathscr R}\mathcal E_c
\subseteq
\mathscr D_{\mathscr R}.
\end{equation}
\end{lemma}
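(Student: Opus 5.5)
We verify the conjugation formula on monomials and then extend it to the whole algebra by linearity and continuity.

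\emph{Monomial computation.} Since \(\mathcal E_c\) is diagonal with \(\mathcal E_c(z^m)=c^mQ^{m^2}z^m\), its inverse is also diagonal, with \(\mathcal E_c^{-1}(z^M)=c^{-M}Q^{-M^2}z^M\). Apply the composite to \(z^m\):
\[
\mathcal E_c^{-1}z^s\tau_k\mathcal E_c(z^m)
=c^mQ^{m^2}Q^{km}\,\mathcal E_c^{-1}(z^{m+s})
=c^{-s}Q^{m^2+km-(m+s)^2}z^{m+s}.
\]
The exponent of \(Q\) is \(km-2sm-s^2=(k-2s)m-s^2\). On the other hand, \(c^{-s}Q^{-s^2}z^s\tau_{k-2s}(z^m)=c^{-s}Q^{-s^2}Q^{(k-2s)m}z^{m+s}\). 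The two agree for every \(m\).

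\emph{Equality as operators.} Both sides are continuous \(\mathscr F\)-linear operators on \(\mathscr F[[z]]\). Each raises the \(z\)-degree by exactly \(s\), so a coefficient of \(z^M\) in the output depends only on finitely many input coefficients. Agreement on all monomials \(z^m\) therefore gives \eqref{eq:Ec-conjugation}.

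\emph{Invariance of \(\mathscr D_{\mathscr R}\).} Take \(\mathcal O=\sum_{s,k}a_{s,k}z^s\tau_k\in\mathscr D_{\mathscr R}\). By the continuity just noted, conjugation may be applied termwise, which gives
\[
\mathcal E_c^{-1}\mathcal O\mathcal E_c=\sum_{s\geq0}\sum_{k}a_{s,k}c^{-s}Q^{-s^2}z^s\tau_{k-2s}.
\]
Three conditions must be checked:
\begin{itemize}
\item The coefficients \(a_{s,k}c^{-s}Q^{-s^2}\) lie in \(\mathscr R=\Z[Q^{\pm1},c^{\pm1}]\), because \(c\) and \(Q\) are units there.
\item For fixed \(s\), the map \(k\mapsto k-2s\) is a bijection of \(\Z\). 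Hence only finitely many dilation indices appear, and the local-finiteness condition in \eqref{eq:D-def-general} holds.
\item The termwise conjugation is justified: on a fixed coefficient of \(z^M\), only the finitely many terms with \(s\le M\) contribute, so the sum is effectively finite there.
\end{itemize}
Thus the conjugate again lies in \(\mathscr D_{\mathscr R}\), which is \eqref{eq:Ec-preserves-D}.

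The only step needing care is the termwise conjugation of the infinite sum, and it is handled by the \(z\)-degree bookkeeping above. The rest is routine.
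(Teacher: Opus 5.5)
Your proof is correct and follows the paper's own argument. You evaluate on $z^m$ to get $c^{-s}Q^{-s^2}Q^{(k-2s)m}z^{m+s}$, then observe that $c^{-s}Q^{-s^2}\in\mathscr R$ and that the reindexing $k\mapsto k-2s$ preserves local finiteness. Your extra $z$-degree bookkeeping justifying termwise conjugation makes explicit a continuity point the paper leaves implicit.
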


\begin{proof}
Evaluating the left-hand side of \eqref{eq:Ec-conjugation} on \(z^m\)
gives
\begin{align*}
\mathcal E_c^{-1}z^s\tau_k\mathcal E_c(z^m)
&=
c^mQ^{m^2+km}
\mathcal E_c^{-1}(z^{m+s})
\\
&=
c^{-s}Q^{m^2+km-(m+s)^2}z^{m+s}
\\
&=
c^{-s}Q^{-s^2}Q^{(k-2s)m}z^{m+s},
\end{align*}
which is the right-hand side of \eqref{eq:Ec-conjugation}.  Since
\(c^{-s}Q^{-s^2}\in\mathscr R\), conjugation sends each monomial
generator of \(\mathscr D_{\mathscr R}\) back into
\(\mathscr D_{\mathscr R}\), and the local-finiteness condition is
unchanged.
\end{proof}

\subsection{The uniform integral transition theorem}

\begin{theorem}[Integral Bailey transition]
\label{thm:general-integral-transition}
Let \(c\) be an indeterminate, \(\ell\in\Z\), and \(r\geq0\).  Then
\[
\mathcal U_{c,\ell}^{(r)}
:=
\mathcal T_c^{-r}\mathcal T_{cQ^\ell}^{r}
\]
defines a continuous \(\mathscr R\)-linear automorphism of
\(\mathscr R[[z]]\).  Consequently, all of its matrix coefficients in
the monomial basis belong to
\[
\mathscr R=\Z[Q^{\pm1},c^{\pm1}].
\]
Its diagonal entry in degree \(m\) is \(Q^{\ell rm}\).
\end{theorem}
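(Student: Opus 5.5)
The plan is to rewrite \(\mathcal U_{c,\ell}^{(r)}\) as a product of conjugates of the dilation \(\tau_\ell\), and then show that each conjugate lies in the integral operator algebra \(\mathscr D_{\mathscr R}\).

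First, by Lemma~\ref{lem:T-shift-general} we have \(\mathcal T_{cQ^\ell}=\mathcal T_c\tau_\ell\). Hence
\[
\mathcal T_{cQ^\ell}^{r}=(\mathcal T_c\tau_\ell)(\mathcal T_c\tau_\ell)\cdots(\mathcal T_c\tau_\ell),
\]
and inserting \(\mathcal T_c^{j}\mathcal T_c^{-j}\) between the factors telescopes to
\[
\mathcal U_{c,\ell}^{(r)}
=
\prod_{j=r-1}^{0}\bigl(\mathcal T_c^{-j}\tau_\ell\mathcal T_c^{j}\bigr)
=
\bigl(\mathcal T_c^{-(r-1)}\tau_\ell\mathcal T_c^{r-1}\bigr)\cdots
\bigl(\mathcal T_c^{-1}\tau_\ell\mathcal T_c\bigr)\,\tau_\ell .
\]
Each factor is an iterated conjugate of \(\tau_\ell\in\mathscr D_{\mathscr R}\). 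By Lemma~\ref{lem:T-factorization-general},
\[
\mathcal T_c^{-1}\mathcal O\,\mathcal T_c=\mathcal E_c^{-1}\bigl(\mathcal C^{-1}\mathcal O\,\mathcal C\bigr)\mathcal E_c .
\]
Lemmas~\ref{lem:C-conjugation-general} and~\ref{lem:Ec-conjugation} show that both conjugations preserve \(\mathscr D_{\mathscr R}\). By induction on \(j\), each \(\mathcal T_c^{-j}\tau_\ell\mathcal T_c^{j}\) lies in \(\mathscr D_{\mathscr R}\). Since \(\mathscr D_{\mathscr R}\) is closed under composition by \eqref{eq:D-composition-general}, \(\mathcal U_{c,\ell}^{(r)}\in\mathscr D_{\mathscr R}\). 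It therefore acts continuously and \(\mathscr R\)-linearly on \(\mathscr R[[z]]\), and its matrix coefficients lie in \(\mathscr R\).

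There is one point to check here. The identity above is first an identity of operators on \(\mathscr F[[z]]\), and the element of \(\mathscr D_{\mathscr R}\) must be the same operator when restricted to \(\mathscr R[[z]]\). This holds because all the conjugation formulas are identities of continuous operators on \(\mathscr F[[z]]\), verified on monomials.

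For bijectivity, note that the inverse is
\[
\mathcal T_{cQ^\ell}^{-r}\mathcal T_c^{r}=\mathcal U_{cQ^\ell,-\ell}^{(r)},
\]
since \(\mathcal T_c=\mathcal T_{cQ^\ell}\tau_{-\ell}\). The same argument applies with parameter \(c'=cQ^\ell\), a unit multiple of \(c\) in \(\mathscr R\), so that \(\mathscr R\) and \(\mathscr D_{\mathscr R}\) are unchanged. Thus the inverse also preserves \(\mathscr R[[z]]\), and \(\mathcal U_{c,\ell}^{(r)}\) is an automorphism.

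For the diagonal entry, I track the \(z^0\)-component, i.e.\ the part \(\sum_k a_{0,k}\tau_k\) of an operator in \(\mathscr D_{\mathscr R}\). This component gives exactly the diagonal of the matrix, and it is multiplicative under composition by \eqref{eq:D-composition-general}. Conjugation by \(\mathcal C\) sends \(\tau_k\) to \(P_k(z)\tau_k\) with \(P_k(0)=1\), so it fixes the \(z^0\)-component. Conjugation by \(\mathcal E_c\) fixes \(\tau_k\) when \(s=0\). Hence every factor above has \(z^0\)-component \(\tau_\ell\), and the product has \(z^0\)-component \(\tau_{\ell r}\). Its diagonal entry in degree \(m\) is therefore \(Q^{\ell r m}\).

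I expect the main obstacle to be the bookkeeping that the expansions stay locally finite in \(\mathscr D_{\mathscr R}\) under repeated conjugation. When \(P_{-d}(z)\) is expanded, each fixed power \(z^s\) must receive only finitely many dilation indices. This follows from the local-finiteness remarks already made in Lemmas~\ref{lem:C-conjugation-general} and~\ref{lem:Ec-conjugation}. The shift \(k\mapsto k-2s\) in the \(\mathcal E_c\)-conjugation does not spoil this, because it only relabels the dilation indices at each fixed \(s\).
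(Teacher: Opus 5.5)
Your proposal is correct and follows the paper's proof essentially step for step. Like the paper, you telescope $\mathcal T_c^{-r}(\mathcal T_c\tau_\ell)^r$ into the conjugates $\mathcal T_c^{-j}\tau_\ell\mathcal T_c^{j}$, show that $\mathscr D_{\mathscr R}$ is stable under these conjugations via $\mathcal T_c=\mathcal C\mathcal E_c$, and obtain the inverse from the pair $(cQ^\ell,-\ell)$. The only difference is in the diagonal entry: the paper reads off $Q^{\ell rm}$ directly as the product of the diagonal entries $c^{-rm}Q^{-rm^2}$ and $c^{rm}Q^{r(m^2+\ell m)}$ of the lower-triangular factors, while your $z^0$-component bookkeeping in $\mathscr D_{\mathscr R}$ is also valid but more roundabout.
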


\begin{proof}
The case \(r=0\) is immediate.  Assume \(r\geq1\), set
\[
\mathcal A=\mathcal T_c,
\qquad
\tau=\tau_\ell,
\]
and use Lemma~\ref{lem:T-shift-general} to write
\[
\mathcal T_{cQ^\ell}=\mathcal A\tau.
\]
Hence
\begin{equation}\label{eq:general-transition-factorization}
\mathcal U_{c,\ell}^{(r)}
=
\mathcal A^{-r}(\mathcal A\tau)^r.
\end{equation}
As a product of conjugates,
\begin{equation}\label{eq:general-conjugate-product}
\mathcal A^{-r}(\mathcal A\tau)^r
=
\left(\mathcal A^{-(r-1)}\tau\mathcal A^{r-1}\right)
\cdots
\left(\mathcal A^{-1}\tau\mathcal A\right)\tau.
\end{equation}

By Lemma~\ref{lem:T-factorization-general},
\(\mathcal A=\mathcal C\mathcal E_c\).  Hence
\[
\mathcal A^{-1}\mathscr D_{\mathscr R}\mathcal A
=
\mathcal E_c^{-1}
\mathcal C^{-1}\mathscr D_{\mathscr R}\mathcal C
\mathcal E_c
\subseteq
\mathcal E_c^{-1}\mathscr D_{\mathscr R}\mathcal E_c
\subseteq
\mathscr D_{\mathscr R}.
\]

Since \(\tau\in\mathscr D_{\mathscr R}\), induction shows that every
conjugate
\[
\mathcal A^{-j}\tau\mathcal A^j
\]
belongs to \(\mathscr D_{\mathscr R}\).  The product
\eqref{eq:general-conjugate-product} therefore lies in
\(\mathscr D_{\mathscr R}\), and consequently preserves
\(\mathscr R[[z]]\).  This proves integrality of all matrix
coefficients.

Since
\[
\Z[Q^{\pm1},(cQ^\ell)^{\pm1}]=\mathscr R,
\]
the same argument applied to the pair \((cQ^\ell,-\ell)\) shows that
\[
\mathcal T_{cQ^\ell}^{-r}\mathcal T_c^r
\]
also preserves \(\mathscr R[[z]]\).  This operator is the inverse of
\(\mathcal U_{c,\ell}^{(r)}\), so the latter is an automorphism.
Finally, the diagonal entry of \(\mathcal T_c\) in degree \(m\) is
\(c^mQ^{m^2}\), whereas that of \(\mathcal T_{cQ^\ell}\) is
\(c^mQ^{m^2+\ell m}\).  Taking the indicated powers gives the diagonal
entry \(Q^{\ell rm}\) of \(\mathcal U_{c,\ell}^{(r)}\).
\end{proof}

\begin{corollary}\label{cor:integral-transition}
For every integer \(n\) and every \(r\geq0\),
\begin{equation}\label{eq:integral-transition}
\mathcal T_{Q^n}^{-r}\mathcal T_Q^r
\end{equation}
has all matrix coefficients in \(\Z[Q^{\pm1}]\) and is an automorphism
of \(\Z[Q^{\pm1}][[z]]\).
\end{corollary}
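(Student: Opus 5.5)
The corollary is a specialization of Theorem~\ref{thm:general-integral-transition}. I will take \(c=Q^n\) and \(\ell=1-n\), so that \(cQ^\ell=Q\) and hence \(\mathcal U_{c,\ell}^{(r)}=\mathcal T_{Q^n}^{-r}\mathcal T_Q^r\). The only issue is that the theorem is stated for an indeterminate \(c\), while here \(c\) is specialized. So I need to check that the specialization map
\[
\mathscr R=\Z[Q^{\pm1},c^{\pm1}]\to\Z[Q^{\pm1}],\qquad c\mapsto Q^n,
\]
commutes with forming the operator.

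I would handle this at the level of matrix entries. The entries of \(\mathcal T_c\) and \(\mathcal T_{cQ^\ell}\) lie in \(\Q(Q)[c]\). Their inverses are lower triangular with diagonal entries \(c^{-m}Q^{-m^2}\), so each inverse entry is a polynomial in the off-diagonal entries divided by a product of diagonal entries. All denominators are therefore monomials in \(c\) times products of \((Q;Q)_k\).

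Consequently, every entry of \(\mathcal T_c^{-r}\mathcal T_{cQ^\ell}^r\) is a finite sum of products of such entries. It lies in \(\Q(Q)[c^{\pm1}]\), and evaluation at \(c=Q^n\) is a ring homomorphism \(\Q(Q)[c^{\pm1}]\to\Q(Q)\). Because the matrices are lower triangular, each entry of a product is a finite sum, so specializing entrywise commutes with products and inverses.

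The specialization of \(\mathcal T_c\) is exactly \(\mathcal T_{Q^n}\), whose diagonal \(Q^{nm+m^2}\) is nonzero. Likewise \(\mathcal T_{cQ^\ell}\) specializes to \(\mathcal T_Q\). Hence the specialized matrix equals \(\mathcal T_{Q^n}^{-r}\mathcal T_Q^{r}\).

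By Theorem~\ref{thm:general-integral-transition}, the unspecialized entries already lie in \(\mathscr R\), so their images lie in \(\Z[Q^{\pm1}]\). This gives integrality, and it holds for every integer \(n\), including \(n\le 0\).

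For the automorphism claim, I would apply the same argument to the inverse operator \(\mathcal T_{cQ^\ell}^{-r}\mathcal T_c^r\). Its integrality is also established in the proof of the theorem. This produces an integral two-sided inverse of \(\mathcal T_{Q^n}^{-r}\mathcal T_Q^r\).

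Both matrices are lower triangular, so they act continuously on \(\Z[Q^{\pm1}][[z]]\) in the \(z\)-adic topology, and they are mutually inverse there. Alternatively, one can observe that the diagonal entries \(Q^{(1-n)rm}\) are units.

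The only mildly delicate step is justifying that specialization commutes with inversion. I expect the observation that the diagonal entries stay nonzero, being monomials in \(c\) and \(Q\), to settle it.
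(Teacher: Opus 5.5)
Your proposal is correct and follows the paper's proof. The paper specializes $c=Q^n$, $\ell=1-n$ in Theorem~\ref{thm:general-integral-transition}, so that $cQ^\ell=Q$, and applies the same specialization to the inverse $\mathcal T_{cQ^\ell}^{-r}\mathcal T_c^{r}$. Your extra step of working in $\Q(Q)[c^{\pm1}]$, to check that entrywise specialization commutes with products and inverses of lower-triangular matrices, makes explicit a point the paper leaves implicit; your remark that the unit diagonal $Q^{(1-n)rm}$ already forces invertibility over $\Z[Q^{\pm1}]$ is a valid shortcut.
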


\begin{proof}
In Theorem~\ref{thm:general-integral-transition}, specialize
\[
c=Q^n,
\qquad
\ell=1-n.
\]
Then \(cQ^\ell=Q\).  Since every matrix coefficient in the general
theorem is a Laurent polynomial in \(Q\) and \(c\), this specialization
is well-defined and belongs to \(\Z[Q^{\pm1}]\).  The same
specialization applies to the inverse.
\end{proof}

\subsection{Proof of Theorem~\ref{thm:main}}

\begin{proof}[Proof of Theorem~\ref{thm:main}]
By Proposition~\ref{prop:H-universal}, there is a unique sequence
\[
H_m^{(n,p)}\in\Q(q)
\]
for which \eqref{eq:main-expansion} holds.  Define
\[
\beta_m^{(n,p)}
=
(-1)^m q^{-m(m+n+1)}H_m^{(n,p)}.
\]
Theorem~\ref{thm:beta-transition} gives
\[
\beta^{(n,p)}
=
\mathcal T_{Q^n}^{-(p+1)}
\mathcal T_Q^{p+1}\delta.
\]
Together with the definition of \(\beta_m^{(n,p)}\), this gives
\eqref{eq:main-H-operator}.
By Corollary~\ref{cor:integral-transition}, applied with \(r=p+1\),
\[
\beta_m^{(n,p)}\in\Z[Q^{\pm1}]
\qquad(m\geq0).
\]
Therefore
\[
H_m^{(n,p)}
=
(-1)^m q^{m(m+n+1)}\beta_m^{(n,p)}
\in\Z[q^{\pm1}],
\]
because \(\Z[Q^{\pm1}]=\Z[q^{\pm2}]\subset\Z[q^{\pm1}]\).
The mirror statement follows from Corollary~\ref{cor:mirror}.
\end{proof}

\begin{proof}[Proof of Corollary~\ref{cor:all-two-strand}]
If \(k>0\) is odd, write \(k=2p+1\) with \(p\geq0\) and apply
Theorem~\ref{thm:main}.  If \(k<0\), then \(T(2,k)\) is the mirror of
\(T(2,-k)\), so the conclusion follows from the mirror part of the
theorem. 
\end{proof}

\section{Further discussions}
\label{sec:related}

We conclude with several remarks on possible extensions of the present
method and on its relation to congruence properties and the
\(SU(n)\) volume conjecture.

\subsection{Higher-strand torus knots}

The Newton expansion in Section~\ref{sec:QDD} is valid for arbitrary
knots; the restriction to \(T(2,2p+1)\) enters only through the
reduction of the Lin--Zheng formula to a one-dimensional WP-Bailey
transform.  For a general torus knot \(T(r,k)\), with \(\gcd(r,k)=1\),
the Lin--Zheng formula involves the plethysm
\[
s_{(N)}(x_1^r,x_2^r,\ldots)
=
\sum_{\lambda\vdash rN}
c_{(N);r}^{\lambda}s_\lambda(x_1,x_2,\ldots);
\]
see \cite{LinZheng}.  For \(r>2\), this leads to multi-index sums,
suggesting a higher-dimensional analogue of the Bailey and WP-Bailey
transforms.

The integral transition theorem of Section~\ref{sec:integrality}, for \(d\geq0\),
\[
\mathcal T_c^{-d}\mathcal T_{cQ^\ell}^{\,d}
\in
\operatorname{Mat}_{\mathrm{lt}}
\bigl(\Z[Q^{\pm1},c^{\pm1}]\bigr),
\]
is independent of the torus-knot calculation.  Thus, if the
higher-strand coefficients admit a suitable Bailey-type reduction,
the integrality argument may extend to this setting.  A natural
problem is to find a higher-dimensional analogue of the
diagonalization
\[
M_aD W_{Q,a}=S M_Q;
\]
see \cite{AndrewsBerkovich,McLaughlin,Warnaar}.

\subsection{Bailey structures for arbitrary knots}

Theorem~\ref{thm:universal-bailey} associates to every knot \(K\) an
ordinary Bailey pair
\[
\beta^{(n)}(K)=M_a\alpha^{(n)}(K),
\qquad a=Q^n,
\]
with \(\alpha^{(n)}(K)\) determined by its colored \(SU(n)\) invariants.
Thus the Newton transform has a formal Bailey description for arbitrary
knots.

For \(T(2,2p+1)\), this formal relation admits the explicit factorization
\[
\beta^{(n,p)}
=
\mathcal T_{Q^n}^{-(p+1)}
\mathcal T_Q^{p+1}\delta.
\]
It is natural to ask which other knot families admit comparably explicit
Bailey or WP-Bailey descriptions.

\subsection{Congruence relations}

Congruence relations for colored HOMFLY-PT and \(SU(n)\) invariants
were one of the motivations for the Chen--Liu--Zhu cyclotomic
expansion; see \cite{CLPZ,CLZ,ZhuStructures}.  In the present
framework, the corresponding divisibility follows directly from the
Newton expansion.

\begin{corollary}\label{cor:further-congruence}
Let \(K\) be a knot such that
\[
J_N^{SU(n)}(K;q)
=
\sum_{m=0}^{N}
P_m(X_N)H_m^{(n)}(K;q),
\qquad
H_m^{(n)}(K;q)\in\Z[q^{\pm1}],
\]
where
\[
P_m(X)=\prod_{j=0}^{m-1}(X-X_j).
\]
Then, for \(N\geq k\),
\begin{equation}\label{eq:further-congruence}
J_N^{SU(n)}(K;q)
\equiv
J_k^{SU(n)}(K;q)
\pmod{\{N-k\}\{N+k+n\}}.
\end{equation}
\end{corollary}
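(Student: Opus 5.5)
We subtract the two Newton expansions and show that every term of the difference is divisible by the single product \(\{N-k\}\{N+k+n\}\).

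Taking \(N\) and \(k\) in the assumed expansion gives
\[
J_N^{SU(n)}(K;q)-J_k^{SU(n)}(K;q)
=\sum_{m=0}^{N}\bigl(P_m(X_N)-P_m(X_k)\bigr)H_m^{(n)}(K;q).
\]
For \(m>k\) the factor \(X-X_k\) divides \(P_m(X)\), so \(P_m(X_k)=0\) in that range. Extending the inner sum for \(J_k\) up to \(N\) is therefore harmless.

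Each \(P_m\) is a monic polynomial in \(X\) with coefficients in \(\Z[q^{\pm1}]\). The reason is that \(X_j=q^{2j+n}+q^{-2j-n}\) is a Laurent polynomial with integer coefficients. Hence, as polynomials in two variables,
\[
P_m(X)-P_m(Y)=(X-Y)\,G_m(X,Y),\qquad G_m\in\Z[q^{\pm1}][X,Y].
\]
This is just the factorization \(X^t-Y^t=(X-Y)(X^{t-1}+\cdots+Y^{t-1})\) applied to each monomial.

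Now specialize \(X=X_N\) and \(Y=X_k\). Since \(H_m^{(n)}\in\Z[q^{\pm1}]\), every term of the difference becomes \((X_N-X_k)\) times an element of \(\Z[q^{\pm1}]\). By Lemma~\ref{lem:newton-difference},
\[
X_N-X_k=\{N-k\}\{N+k+n\}.
\]
The difference \(J_N-J_k\) is therefore \(\{N-k\}\{N+k+n\}\) times an element of \(\Z[q^{\pm1}]\). This is the congruence \eqref{eq:further-congruence}, read as divisibility in \(\Z[q^{\pm1}]\).

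The only point needing care is that the quotient must lie in \(\Z[q^{\pm1}]\) and not merely in \(\Q(q)\). This is guaranteed by the integrality of the coefficients of \(P_m\) and of the \(H_m^{(n)}\), so no real obstacle is expected. When \(N=k\) the modulus is \(\{0\}=0\), and the statement is trivially true since both sides are equal.
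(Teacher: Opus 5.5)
Your proposal is correct and follows the paper's proof essentially verbatim: you subtract the two Newton expansions, note that $P_m(X_k)=0$ for $m>k$, and factor $X_N-X_k=\{N-k\}\{N+k+n\}$ out of each $P_m(X_N)-P_m(X_k)$. Your explicit remark that the quotient $G_m$ has coefficients in $\Z[q^{\pm1}]$, so the divisibility holds in $\Z[q^{\pm1}]$, is a detail the paper leaves implicit.
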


\begin{proof}
Since \(P_m(X_k)=0\) for \(m>k\),
\[
J_N^{SU(n)}(K;q)-J_k^{SU(n)}(K;q)
=
\sum_{m=0}^{N}
\bigl(P_m(X_N)-P_m(X_k)\bigr)
H_m^{(n)}(K;q).
\]
For every \(m\), the polynomial
\(P_m(X_N)-P_m(X_k)\) is divisible by \(X_N-X_k\).  Since
\[
X_N-X_k
=
\{N-k\}\{N+k+n\},
\]
the result follows.
\end{proof}

Thus Theorem~\ref{thm:main} proves the congruence \eqref{eq:further-congruence} for all two-strand torus knots considered in this paper.

\subsection{The \(SU(n)\) volume conjecture}

The volume conjecture of Kashaev and Murakami--Murakami relates the
asymptotics of the colored Jones polynomial at roots of unity to the hyperbolic geometry of knot complements
\cite{Kashaev,MurakamiMurakami}.  Related volume conjectures have also
been formulated for the Reshetikhin--Turaev and Turaev--Viro invariants
\cite{ChenYang}.

Chen--Liu--Zhu formulated an \(SU(n)\) analogue of the volume
conjecture \cite{CLZ}.  For fixed integers \(a,s\), set
\[
\xi_{N,a}(s)
=
\exp\left(
\frac{s\pi\sqrt{-1}}{N+a}
\right).
\]
Their conjecture has two regimes.

\begin{conjecture}[Chen--Liu--Zhu]\label{conj:SU-n-volume}
Let \(n\geq2\).

\begin{enumerate}
\item[(i)]
If
\[
a\in\mathbb Z\setminus\{1,\ldots,n-1\},
\]
then, for every knot \(K\),
\[
2\pi s
\lim_{N\to\infty}
\frac{
\log J_N^{SU(n)}
\bigl(K;\xi_{N,a}(s)\bigr)
}{
N+1
}
=
0.
\]

\item[(ii)]
If
\[
a\in\{1,\ldots,n-1\},
\]
then, for every hyperbolic knot \(K\),
\[
2\pi s
\lim_{N\to\infty}
\frac{
\log J_N^{SU(n)}
\bigl(K;\xi_{N,a}(s)\bigr)
}{
N+1
}
=
\operatorname{Vol}(S^3\setminus K)
+
\sqrt{-1}\,
\operatorname{CS}(S^3\setminus K).
\]
\end{enumerate}
\end{conjecture}

\begin{corollary}\label{cor:zero-volume-regime}
Let \(K_p=T(2,2p+1)\), \(n\geq2\), and
\[
a\in\mathbb Z\setminus\{1,\ldots,n-1\}.
\]
Then
\[
2\pi s
\lim_{N\to\infty}
\frac{
\log J_N^{SU(n)}
\bigl(K_p;\xi_{N,a}(s)\bigr)
}{
N+1
}
=
0.
\]
Hence part {\rm (i)} of
Conjecture~\ref{conj:SU-n-volume} holds for all two-strand torus knots.
\end{corollary}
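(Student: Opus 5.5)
Granting the cyclotomic expansion of Theorem~\ref{thm:main} with \(H_m^{(n,p)}(q)\in\Z[q^{\pm1}]\) (and Corollary~\ref{cor:all-two-strand} for the mirror), the plan is to evaluate the expansion at \(q=\xi_{N,a}(s)\) and show that it reduces to a finite sum whose length and terms do not depend on \(N\). Since \(\{r\}=q^r-q^{-r}\) vanishes at \(q=\xi_{N,a}(s)\) whenever \(r\) is a multiple of \(N+a\), the goal is to locate a factor \(\{N-j\}\{N+n+j\}\) in the basis element \(C_{N+1,m}^{(n)}\) that is such a multiple. The factor \(\{N+n+j\}\) has \(N+n+j=N+a\) exactly when \(j=a-n\), and the factor \(\{N-j\}\) has \(N-j=N+a\) exactly when \(j=-a\). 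For \(a\le 0\), the index \(j_0=-a\) lies in the range \(0\le j_0\le m-1\) as soon as \(m\ge 1-a\). For \(a\ge n\), the index \(j_0=a-n\ge0\) lies in the range once \(m\ge a-n+1\). In both cases, setting \(M=M(a,n)\), every term with \(m\ge M\) vanishes at \(\xi_{N,a}(s)\), and \(M\) is independent of \(N\). The excluded values \(a\in\{1,\dots,n-1\}\) are precisely those for which neither index is available, which explains the hypothesis.

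This gives, for \(N\ge M\),
\[
J_N^{SU(n)}\bigl(K_p;\xi_{N,a}(s)\bigr)=\sum_{m=0}^{M-1}C_{N+1,m}^{(n)}\bigl(\xi_{N,a}(s)\bigr)\,H_m^{(n,p)}\bigl(\xi_{N,a}(s)\bigr).
\]
Each factor \(\{r\}\) has modulus at most \(2\) on the unit circle, so \(|C_{N+1,m}^{(n)}|\le 4^{m}\le 4^M\). Each \(H_m^{(n,p)}\) is a fixed Laurent polynomial, so its absolute value on the unit circle is bounded by the sum of the absolute values of its coefficients, a constant independent of \(N\). Hence \(|J_N|\) is bounded uniformly in \(N\), and therefore \(\limsup \log|J_N|/(N+1)\le 0\).

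The main obstacle is the lower bound, since \(\log J_N\) could tend to \(-\infty\) if \(J_N\) vanished or decayed. I would handle this by computing the limit of the finite sum directly. As \(N\to\infty\), \(\xi_{N,a}(s)\to1\) and \(q^{N}\to e^{s\pi\sqrt{-1}}=(-1)^s\), so \(\{N\pm c\}\to (-1)^s(1-1)=0\) for fixed \(c\). Thus every term with \(m\ge1\) tends to \(0\), while the \(m=0\) term is \(H_0^{(n,p)}=J_0^{SU(n)}=1\). Consequently \(J_N(\xi_{N,a}(s))\to1\), so \(\log J_N\) (on the principal branch, for large \(N\)) tends to \(0\). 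Dividing by \(N+1\) gives limit \(0\), and multiplying by \(2\pi s\) preserves this. The mirror and negative-\(r\) cases follow in the same way from the mirror statement, since it replaces the coefficients by \(H_m^{(n,p)}(q^{-1})\), which are again Laurent polynomials.
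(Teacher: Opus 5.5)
Your proposal is correct and follows essentially the same route as the paper: you truncate the expansion using the vanishing factor $\{N+a\}$ at $\xi_{N,a}(s)$ (available exactly when $a\le 0$ or $a\ge n$), show that each remaining term with $m\ge1$ tends to $0$ while $H_0^{(n,p)}=J_0^{SU(n)}=1$, and conclude that $J_N^{SU(n)}(K_p;\xi_{N,a}(s))\to1$, so the principal logarithm tends to $0$. The only differences are cosmetic: your uniform upper bound is redundant once the limit is computed, and you use $\xi_{N,a}(s)^{N}\to(-1)^s$ where the paper uses the explicit estimate $\{N+b\}_{q_N}=O(N^{-1})$.
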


\begin{proof}
Put
\[
q_N=\xi_{N,a}(s)
=
\exp\left(\frac{s\pi\sqrt{-1}}{N+a}\right).
\]
Then
\[
\{N+a\}_{q_N}=0.
\]

If \(a\leq0\), the factor
\[
\{N-(-a)\}=\{N+a\}
\]
occurs in \(C_{N+1,m}^{(n)}\) whenever \(m>-a\).  If \(a\geq n\),
the factor
\[
\{N+n+(a-n)\}=\{N+a\}
\]
occurs whenever \(m>a-n\).  Hence, for all sufficiently large \(N\),
the cyclotomic expansion contains only finitely many nonzero terms.

For every fixed integer \(b\),
\[
\{N+b\}_{q_N}
=
2\sqrt{-1}(-1)^s
\sin\left(
\frac{s\pi(b-a)}{N+a}
\right)
=
O(N^{-1}).
\]
Consequently, for every fixed \(m\geq1\),
\[
C_{N+1,m}^{(n)}(q_N)
=
O(N^{-2m}).
\]
Since
\[
H_m^{(n)}(K_p;q)\in\mathbb Z[q^{\pm1}],
\]
we have
\[
H_m^{(n)}(K_p;q_N)
\longrightarrow
H_m^{(n)}(K_p;1),
\]
and in particular \(H_m^{(n)}(K_p;q_N)\) remains bounded as
\(N\to\infty\).  Since \(H_0^{(n)}=J_0^{SU(n)}=1\), it follows that
\[
J_N^{SU(n)}(K_p;q_N)\longrightarrow1.
\]

Thus, for all sufficiently large \(N\) these values lie in the disk
\(D(1,1/2)\), on which the principal branch of the logarithm is
holomorphic.  Hence
\[
\log J_N^{SU(n)}(K_p;q_N)\longrightarrow 0.
\]
Therefore,
\[
2\pi s
\lim_{N\to\infty}
\frac{\log J_N^{SU(n)}(K_p;q_N)}{N+1}
=0.
\]
\end{proof}

\section*{Acknowledgements}

The author is especially grateful to Professor Qingtao Chen. During a
visit to Shanghai, Professor Chen suggested the problem of the
cyclotomic expansion conjecture for colored \(SU(n)\) invariants. That
conversation provided the initial motivation for the present work.
The author is thanks him for foundational work on the theory
of colored \(SU(n)\) invariants, for his generosity in sharing ideas,
and for many illuminating discussions. His enthusiasm for the subject
has also been a constant source of inspiration to the author.

The author also wishes to express his heartfelt gratitude to Professor
Huabin Ge for his constant support and guidance. Professor Ge took part
in many discussions related to this problem and, during the author's
time in Shanghai, introduced the author to Professor Chen, making these
valuable exchanges possible. The author is deeply thankful to him for
the trust and support he has given throughout the author's research.

\end{document}